\def\section{\@startsection{section}{1}%
  \z@{1.75\linespacing\@plus\linespacing}{1\linespacing}%
  {\normalfont\bf\centering}}
\DeclareMathSymbol{\lsb@l}{\mathalpha}{letters}{`l}
\newcommand{\ph}{\varphi}\newcommand{\eps}{\varepsilon}
\newcommand{\smath}[1]{\hbox{$#1$}}
\newcommand{\sfrac}[2]{\smath{\frac{#1}{#2}}}
\newcommand{\tpe}{\tilde\ph_\eps}
\newcommand{\minus}{\setminus}
\newcommand{\N}{{\mathbb N}}
\newcommand{\skipit}[2][{\par\noindent[...]\medskip\par}]{\medskip #1}
\definecolor{RED}{rgb}{1,0,0}\definecolor{BLUE}{rgb}{0,0,1}
\def\la{\langle}
\def\ra{\rangle}
\def\w{\wedge}
\def\dbar{\bar\partial}
\def\C{{\mathbb C}}
\def\P{{\mathbb P}}
\def\Ok{{\mathcal O}}
\DeclareMathOperator{\supp}{supp}
\DeclareMathOperator{\tr}{tr}
\DeclareMathOperator{\codim}{codim}
\def\be{\begin{equation}}
\def\ee{\end{equation}}
\newtheorem{thm}{Theorem}[section]
\newtheorem{lma}[thm]{Lemma}
\newtheorem{cor}[thm]{Corollary}
\newtheorem{prop}[thm]{Proposition}
\theoremstyle{definition}
\newtheorem{df}[thm]{Definition}
\newtheorem{preremark}[thm]{Remark}
\newtheorem{preex}[thm]{Example}
\newenvironment{remark}{\begin{preremark}}{\end{preremark}}
\newenvironment{ex}{\begin{preex}}{\end{preex}}
\numberwithin{equation}{section}
\begin{document}

\title[]{Chern forms of singular metrics on vector bundles}

\date{\today}

\author{Richard L\"ark\"ang, Hossein Raufi, Jean Ruppenthal, Martin Sera}

\address{R. L\"ark\"ang, H. Raufi, M. Sera, Department of Mathematics\\Chalmers University of Technology and the University of Gothenburg\\412 96 G\"oteborg, Sweden.}

\email{larkang@chalmers.se, raufi@chalmers.se, sera@chalmers.se}

\address{J. Ruppenthal, Department of Mathematics, University of Wuppertal, Gau{\ss}str. 20, 42119 Wuppertal, Germany.}

\email{ruppenthal@math.uni-wuppertal.de}



\thanks{The first author was supported by a grant from the Swedish Research Council, the second author by a grant from the
Olle Engkvist foundation, and the last three authors by the Deutsche Forschungsgemeinschaft
(DFG, German Research Foundation), grant RU 1474/2 within DFG's Emmy Noether Programme.}

\begin{abstract}
We study singular hermitian metrics on holomorphic vector bundles, following Berndtsson-P{\u{a}}un. Previous work by Raufi has shown that for such metrics, it is in general not possible to define the curvature as a current with measure coefficients. In this paper we show that despite this, under appropriate codimension restrictions on the singular set of the metric, it is still possible to define Chern forms as closed currents of order 0 with locally finite mass, which represent the Chern classes of the vector bundle.
\end{abstract}

\maketitle

\section{Introduction}
\noindent Let $X$ be a complex manifold of dimension $n$, let $E\to X$ be a rank $r$ holomorphic vector bundle over $X$, and let $h$ denote a hermitian metric on $E$. The classical differential geometric study of $X$ through $(E,h)$, revolves heavily around the notion of the curvature associated with $h$. This approach requires the metric to be smooth (i.e.\ twice differentiable). However, for line bundles Demailly in \cite{Dem4} introduced the notion of \emph{singular} hermitian metrics, and in a series of influential papers he and others showed how these are a fundamental tool for giving complex algebraic geometry an analytic interpretation.

In \cite{BP} Berndtsson and P{\u{a}}un introduced the following notion of singular metrics for vector bundles:

\begin{df}\label{df:sing}
Let $E\to X$ be a holomorphic vector bundle over a complex manifold $X$. A \emph{singular hermitian metric} $h$ on $E$ is a measurable map from the base space $X$ to the space of hermitian forms on the fibers. The hermitian forms are allowed to take the value $\infty$ at some points in the base (i.e.\ the norm function $\|\xi\|_h$ is a measurable function with values in $[0,\infty]$), but for any fiber $E_x$ the subset $E_0:=\{\xi\in E_x\ ;\ \|\xi\|_{h(x)}<\infty\}$ has to be a linear subspace, and the restriction of the metric to this subspace must be an ordinary hermitian form.
\end{df}

They also defined what it means for these types of metrics to be curved in the sense of Griffiths:

\begin{df}\label{df:Gr}
Let $E\to X$ be a holomorphic vector bundle over a complex manifold $X$ and let $h$ be a singular hermitian metric. We say that $h$ is \emph{Griffiths negative} if $\|u\|^2_h$ is plurisubharmonic for any (local) holomorphic section $u$. Furthermore, we say that $h$ is \emph{Griffiths positive} if the dual metric $h^*$ is Griffiths negative.
\end{df}

\begin{remark}
(i) Strictly speaking, \cite{BP} define $h$ to be Griffiths negative if $\log\|u\|_h$ is plurisubharmonic for any holomorphic section $u$. It is, however, not too difficult to show that these two definitions are equivalent (see e.g. \cite{R}, section 2).\vspace{0.1cm}\\
(ii) Any singular hermitian metric on a vector bundle $E$ induces a dual metric on the dual bundle $E^\ast$ (see Lemma \ref{lma:SingularDual} below). This justifies the notion of Griffiths positivity in Definition \ref{df:Gr} in terms of duality.
\qed
\end{remark}

\noindent Definition~\ref{df:Gr} is very natural as these conditions are well-known equivalent properties for smooth metrics.

Although Definition~\ref{df:sing} is very liberal, as it basically puts no restriction on the metrics, it turns out that Definition~\ref{df:Gr} rules out most of the pathological behaviour. For example, we have the following proposition (\cite{R}, Proposition 1.3 (ii)):

\begin{prop}\label{prop:psh}
Let $h$ be a singular, Griffiths negative, hermitian metric. If $\det h\not\equiv0$, then $i\partial\dbar\log\det h$ is a closed, positive $(1,1)$-current.
\end{prop}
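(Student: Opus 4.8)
The plan is to reduce everything to showing that $\log\det h$ is plurisubharmonic (psh). The statement is local, so I would fix a coordinate chart $U\subset\C^n$ together with a holomorphic frame $e_1,\dots,e_r$ of $E$ over $U$ and write $H=(h_{j\bar k})$ with $h_{j\bar k}=\langle e_j,e_k\rangle_h$; a holomorphic section then corresponds to a holomorphic vector $a\colon U\to\C^r$, and Griffiths negativity reads: $a^*Ha=\|a\|_h^2$ is psh for every such $a$. Since the diagonal entries $h_{j\bar j}=\|e_j\|_h^2$ are psh and nonnegative they are locally bounded, and by Cauchy--Schwarz so are the off-diagonal entries; hence $\det H$ is locally bounded and $\log\det H$ is locally bounded above. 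Once I know $\log\det H$ is (a.e.\ equal to) a psh function, $i\partial\dbar\log\det H$ is automatically a positive current, it is closed because it is $\partial\dbar$ of a function, and the hypothesis $\det h\not\equiv0$ is exactly what prevents this psh function from being $\equiv-\infty$.

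The key step is a regularization that preserves negativity. I would mollify entrywise, $H_\eps:=H*\rho_\eps$ with $\rho_\eps$ a standard approximate identity, and claim that $H_\eps$ is again Griffiths negative. The one genuinely clever point is a translation trick: for a holomorphic $a$ and fixed $w$, introduce the translated section $a_w(\zeta):=a(\zeta+w)$, so that
\[
a(z)^*H(z-w)a(z)=\|a_w\|_h^2(z-w),
\]
which is psh in $z$ because $\|a_w\|_h^2$ is psh and $z\mapsto z-w$ is holomorphic. Averaging over $w$ against $\rho_\eps$ exhibits $\|a\|_{H_\eps}^2=a^*H_\eps a$ as an integral of psh functions, hence psh; so $H_\eps$ is a smooth Griffiths negative metric. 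This detour is needed precisely because the negativity condition ranges over all holomorphic sections, so a naive convolution of $H$ is not obviously negative.

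For the smooth metric $H_\eps$ I would then run the standard determinant computation. In the holomorphic frame the Chern curvature is $\Theta=\dbar(H_\eps^{-1}\partial H_\eps)$, and Jacobi's formula gives $\tr\Theta=\dbar\partial\log\det H_\eps=-\partial\dbar\log\det H_\eps$. Smooth Griffiths negativity means $\langle i\Theta_{v\bar v}\xi,\xi\rangle\le0$ for all $v\in T_zU$ and $\xi\in E_z$; tracing over an $h$-orthonormal basis yields $i\tr\Theta\le0$, that is $i\partial\dbar\log\det H_\eps\ge0$. Thus $\psi_\eps:=\log\det H_\eps$ is psh, and since the entries of $H_\eps$ are locally uniformly bounded for small $\eps$, the functions $\psi_\eps$ are locally uniformly bounded above.

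Finally I would pass to the limit $\eps\to0$. As $H\in L^1_{\mathrm{loc}}$, we have $H_\eps\to H$ a.e., so $\psi_\eps\to\log\det H$ a.e.; the $\psi_\eps$ are psh and locally uniformly bounded above, and by $\det h\not\equiv0$ the limit is not $\equiv-\infty$. By the standard compactness properties of psh functions (the upper-semicontinuous regularization of the $\limsup$ of a locally uniformly bounded-above family of psh functions is psh and agrees with it a.e.), it follows that $\log\det H$ equals a.e.\ a psh function, which is what we wanted. The main obstacle is exactly this passage to the limit: if needed I expect to supplement it with the concavity of $A\mapsto\log\det A$ on positive Hermitian matrices, which via Jensen gives $\psi_\eps\ge(\log\det H)*\rho_\eps$ and thereby pins down the limit.
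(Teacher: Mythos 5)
Your proof is correct and follows essentially the same route as the paper's source for this statement (\cite{R}, Proposition 1.3 (ii), resting on the approximation result recorded here as Proposition~\ref{prop:appr}): convolution with the translation trick produces smooth Griffiths negative metrics, the trace--determinant computation shows $\log\det h_\varepsilon$ is plurisubharmonic, and one passes to the limit. The only streamlining you miss is that the sub-mean-value inequality makes the convolution approximants \emph{decrease} pointwise to $h$, hence $\log\det h_\varepsilon$ decreases pointwise to $\log\det h$, whose limit is plurisubharmonic (or $\equiv-\infty$, excluded by $\det h\not\equiv 0$) with no need for the upper-semicontinuous regularization or the Jensen supplement at the end.
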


The proof uses the well-known fact that if $h$ is a metric on $E$, then $\det h$ is a metric on $\det E$. For smooth metrics it is also well-known that the curvature of $\det h$, i.e.\ $-\partial\dbar\log\det h = \partial \dbar \log \det h^*$, is the trace of the curvature of $h$, i.e.\ $2\pi i$ times the first Chern form $c_1(E,h)$. Thus, a simple consequence of Proposition~\ref{prop:psh} is that for a singular metric which is curved in the sense of Definition~\ref{df:Gr}, it is possible to define the first Chern form in a meaningful way as a closed, positive or negative $(1,1)$-current.

However, despite this, one of the main results in \cite{R} (Theorem 1.5) is a counter-example that shows that the curvature requirement of Definition~\ref{df:Gr} is not enough to define the curvature of a singular metric as a current with measure coefficients. This rather surprising fact, given the existence of the first Chern form, leads to the question of which differential geometric concepts one can obtain from Definition~\ref{df:Gr}.

The main purpose of this note is to show that under appropriate codimension restrictions on the singular set, it is possible to define Chern forms, $c_k(E,h)$, $k=1,\ldots,\min(r,n)$, associated with a singular metric which is curved as in Definition~\ref{df:Gr}. For metrics that are also continuous outside of the singular set, we can prove the following theorem:

\begin{thm}\label{thm:ContThm}
Let $E\to X$ be a rank $r$ holomorphic vector bundle over a complex manifold $X$, and let $h$ denote a singular, Griffiths positive or negative hermitian metric on $E$. Assume that there is some subvariety $V$ of $X$ with $codim(V)\geq k$ such that $h$ is continuous and non-degenerate outside of $V$.

Then, there exists a unique, closed $(k,k)$-current, $c_k(E,h)$, of order 0 with locally finite mass in $X$ such that for any local regularizing sequence $\{h_\varepsilon\}$ of $h$, with $h_\varepsilon\to h$ locally uniformly outside of $V$, we have that
\begin{equation}\label{eq:weak_conv}
    c_k(E,h_{\varepsilon})\to c_k(E,h)
\end{equation}
in the sense of currents.

More generally, let $E_1,\dots,E_m$ be holomorphic vector bundles on $X$, and for $i=1,\dots,m$, let $h^i$ be a singular Griffiths positive or negative hermitian metric on $E_i$, $k_i \in \mathbb{N}\cup\{0\}$, and let $k := k_1 + \dots + k_m$.
    Assume that there exists a subvariety $V$ of $X$ such that $\codim(V) \geq k$, and such that $h^1,\dots,h^m$ are
    continuous and non-degenerate outside of $V$.

    Then, there exists a unique, closed $(k,k)$-current,
    \begin{equation*}
        c_{k_1}(E_1,h^1) \wedge \dots \wedge c_{k_m}(E_m,h^m),
    \end{equation*}
    of order $0$ with locally finite mass in $X$ such that for any local regularizing sequences $\{h^i_\varepsilon\}$, with $h^i_\varepsilon \to h^i$
    locally uniformly outside of $V$ for $i=1,\dots,m$, we have that
    \begin{equation*}
        c_{k_1}(E_1,h^1_{\varepsilon}) \wedge \dots \wedge c_{k_m}(E_m,h^m_{\varepsilon}) \to
        c_{k_1}(E_1,h^1) \wedge \dots \wedge c_{k_m}(E_m,h^m)
    \end{equation*}
    in the sense of currents.
\end{thm}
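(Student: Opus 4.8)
The plan is to reduce the wedge product of Chern forms to a product of first Chern forms and then to invoke the Bedford--Taylor--Demailly calculus of products of closed currents with locally bounded potentials. After localizing, so that $X$ is a ball and the $E_i$ are trivial, I would first dispose of the Griffiths positive case by duality, replacing each positive $(E_i,h^i)$ by the negative $(E_i^*,(h^i)^*)$ and using $c_{k}(E_i,h^i) = (-1)^{k}c_{k}(E_i^*,(h^i)^*)$, so that I may assume all metrics Griffiths negative. Passing to a (fibre product of) flag bundle(s) $\pi\colon Y \to X$, the splitting principle --- carried out at the level of Chern \emph{forms} via Chern--Weil, not merely in cohomology --- rewrites $c_{k_1}(E_1,h^1_\eps)\w\dots\w c_{k_m}(E_m,h^m_\eps)$ as $\pi_*$ of a fixed universal $\mathbb{Z}$--polynomial in the first Chern forms $c_1(F_\ell, \pi^*h^i_\eps|_{F_\ell})$ of the tautological sub-bundles $F_\ell$ of the filtrations. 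Since $\pi$ is a proper submersion, fibre integration $\pi_*$ is continuous on currents and preserves closedness, order $0$ and local finiteness of mass, and $\tilde V := \pi^{-1}(V)$ is again a subvariety with $\codim \tilde V \geq k$. It thus suffices to prove the statement for a product of $k$ first Chern forms on $Y$.

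Each such factor has a controlled potential. The sub-bundles $F_\ell$ inherit Griffiths negative metrics from $\pi^*h^i$, so by Proposition~\ref{prop:psh} the local weight $\phi_{\ell,\eps} := \log\det(\pi^*h^i_\eps|_{F_\ell})$ is plurisubharmonic and $c_1(F_\ell,\cdot) = -\tfrac{i}{2\pi}\partial\dbar\phi_{\ell,\eps}$. Because the $h^i$ are continuous and non-degenerate off $V$, the limiting weights $\phi_\ell$ are continuous and finite --- hence locally bounded --- on $Y\minus\tilde V$, and $\phi_{\ell,\eps}\to\phi_\ell$ in $L^1_{loc}$, locally uniformly off $\tilde V$; this is exactly where the hypothesis $h^i_\eps\to h^i$ locally uniformly off $V$ is used. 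As the higher Chern forms carry no sign, I would not expect the individual $c_1(F_\ell,\cdot)$ to be positive; instead the universal polynomial is multilinear in the $\partial\dbar\phi_\ell$, and each $\partial\dbar\phi_\ell$ is a difference of two closed positive $(1,1)$--currents with plurisubharmonic, locally-bounded-off-$\tilde V$ potentials, which reduces the product to products of positive currents.

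The analytic core is then the Bedford--Taylor--Demailly theory: if $u_1,\dots,u_k$ are plurisubharmonic and locally bounded outside an analytic set of codimension $\geq k$, the product $\partial\dbar u_1\w\dots\w\partial\dbar u_k$ is a well-defined closed positive $(k,k)$--current of order $0$ with locally finite mass, depending continuously (in the weak topology) on the $u_j$ under local uniform, or decreasing, convergence. Feeding in $\phi_{\ell,\eps}\to\phi_\ell$, together with the multilinear expansion of the previous step, gives a uniform local bound on the mass and the weak convergence of $\bigwedge_\ell c_1(F_\ell, \cdot)$ on $Y$. Applying $\pi_*$ and re-assembling the $\mathbb{Z}$--polynomial yields a closed $(k,k)$--current on $X$ of order $0$ with locally finite mass, and the convergence~\eqref{eq:weak_conv} in the sense of currents; closedness survives the limit since $d$ is weakly continuous and commutes with $\pi_*$.

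Uniqueness, hence independence of the regularizing sequences, then follows because the Bedford--Taylor--Demailly product depends only on the limiting weights $\phi_\ell$, which are determined by the metrics $h^i$ themselves and not by the choice of $\{h^i_\eps\}$. The step I expect to be the main obstacle is the borderline case $\codim \tilde V = k$. There $\tilde V$ has dimension $n-k$ and could a priori carry a $(k,k)$--current such as $[\tilde V]$, so one must show that the product acquires no spurious mass along $\tilde V$; equivalently, that the current defined on $Y\minus\tilde V$ extends across $\tilde V$ with no $\tilde V$--supported contribution and that the convergence holds globally rather than only off $\tilde V$. This is the sharp, critical-codimension instance of the Bedford--Taylor--Demailly comparison and monotonicity estimates, and it is precisely where the hypothesis $\codim(V)\geq k$ must be exploited to the fullest, using the local boundedness of the weights off $\tilde V$. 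A secondary technical point is the verification of the splitting-principle identities at the level of Chern forms (through Chern--Weil and fibre integration), ensuring the reduction to line bundles is valid for the currents, not merely for their cohomology classes.
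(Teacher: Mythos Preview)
Your overall architecture---pass to an auxiliary fibre bundle, reduce to products of first Chern forms with plurisubharmonic potentials locally bounded off a codimension-$k$ set, then invoke Bedford--Taylor--Demailly---is indeed the shape of the paper's argument. But the specific reduction you propose does not work, and the gap is precisely the step you label ``secondary''.

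The splitting principle is a cohomological statement: for a short exact sequence $0\to E'\to E\to E''\to 0$ with induced metrics, one has $c(E,h)=c(E',h')\wedge c(E'',h'')$ only up to a $d$-exact term (the Bott--Chern secondary form), not as an identity of differential forms. Consequently there is no formula on the full flag bundle expressing $\pi^*c_k(E,h_\eps)$, or $c_k(E,h_\eps)$ itself, as a fixed $\mathbb{Z}$-polynomial in the first Chern forms $c_1(F_\ell,\pi^*h_\eps|_{F_\ell})$ at the level of forms. Without that, the whole reduction collapses: you would be taking limits of the wrong sequence of forms. The paper avoids this by going not to the flag bundle but to $\P(E)$ and using the \emph{Segre form} identity $s_k(E,h)=(-1)^k\pi_*\big(c_1(\Ok_{\P(E)}(1),e^{-\varphi})^{k+r-1}\big)$ (Proposition~\ref{prop:Segre}), which \emph{does} hold exactly at the form level. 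Chern forms are then recovered via the algebraic relation \eqref{eq:ChernSegre}. From that point on the analysis is what you sketch: the Griffiths positivity of $h$ makes $e^{-\varphi}$ a positive singular metric on a line bundle (Lemma~\ref{lma:inducedPositive}), $L(\varphi)\subset\pi^{-1}(V)$ (Lemma~\ref{lma:UnbLoc}), and a Demailly-style integration-by-parts against carefully chosen bump forms (Lemma~\ref{lma:locallyBounded}) controls the mass and gives convergence in the critical-codimension case.

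One further correction: the limit current \emph{can} carry mass on $V$---uniqueness is not obtained by showing the extension across $V$ has no $V$-supported part, but by showing (Lemma~\ref{lma:Unique}) that any two closed positive limits which agree off $V$ and have the same integrals against bump forms must agree everywhere; the difference, being supported on $V$, would be a combination $\sum\lambda_i[V_i]$ over codimension-$k$ components, and the bump-form equality forces each $\lambda_i=0$.
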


\begin{remark}
(i) Let $L$ be a trivial line bundle with a possibly singular positive metric $e^{-\varphi}$, i.e.\ $\varphi$ is a plurisubharmonic function. Let $E$ be the trivial rank $r$ bundle $E = L \oplus \cdots \oplus L$. Equip $E$ with the metric $h$ induced by $e^{-\varphi}$, which is then a Griffiths positive singular hermitian metric. If $\varphi$ is smooth, then $c_k(E,h) = C_k (dd^c \varphi)^k$ for some constant $C_k$. For an arbitrary plurisubharmonic function $\varphi$, one does not have a natural meaning of the product $(dd^c \varphi)^k$, without any condition like for example that the set where $\varphi$ is unbounded is contained in an analytic subset of codimension $k$. Thus, since $h$ degenerates precisely where $\varphi$ is $-\infty$, this shows that the codimension requirements on the degeneracy set of $h$ in Theorem \ref{thm:ContThm} can not be relaxed in general.\vspace{0.1cm}\\
(ii) A singular metric $h$ which is Griffiths positive can always be locally approximated by an increasing sequence $\{h_\varepsilon\}$ of (smooth) Griffiths positive metrics. This is the content of \cite{BP}, Proposition 3.1 and \cite{R}, Proposition 6.2 (see Proposition \ref{prop:appr} below), and the regularizing sequence is obtained through convolution with an approximate identity. Thus, if the singular metric is continuous outside its degeneracy set, these propositions yield that we can always obtain a regularizing sequence which converges locally uniformly on this set.\vspace{0.1cm}\\
(iii) To be precise, from the proof of Theorem~\ref{thm:ContThm} it follows that $c_k(E,h)$ can be written as the difference of two positive, closed $(k,k)$-currents. It is natural to ask whether it is actually positive. However, even when $h$ is a smooth Griffiths positive metric on $E$, it is to the best of our knowledge an open question whether $c_k(E,h)$ is a positive form in general, and only known when $k=1$, which follows from the fact that $c_1(E,h)$ equals the first Chern class of the positive line bundle $(\det E,\det h)$, and when $k=2$ and $\dim X = 2$, which is due to Griffiths, \cite{GrPos} (Appendix to \S 5 (b)). If it is indeed the case that $c_k(E,h)$ is positive for all smooth Griffiths positive metrics on $E$, then it would follow by weak convergence that if $h$ is a singular Griffiths positive metric on $E$, then $c_k(E,h)$ is a positive current.
\end{remark}

\begin{ex}
    There are indeed non-trivial singular metrics on vector bundles satisfying the conditions
    of Theorem~\ref{thm:ContThm}. One such class can for example be found in \cite{Hos}.
    Namely, let $E$ be a vector bundle on a complex manifold $X$. If $E$ has global sections $s_1,\dots,s_m$,
    then these sections induce a morphism $s : E^* \to X \times \C^m$, where $X \times \C^m$ is a trivial rank $m$ bundle,
    which we equip with a trivial metric. Through the morphism $s$, we can define a singular hermitian metric $h^*$ on $E^*$ by
    \begin{equation*}
        \la \xi,\eta \ra_{h^*} := \la s(\xi), s(\eta) \ra.
    \end{equation*}
    This is a singular Griffiths negative metric on $E^*$ since if $\xi$ is a holomorphic section of $E^*$, then $s(\xi)$ is a holomorphic section of the trivial rank $m$ bundle, and hence $\|\xi\|^2_{h^*} = \|s(\xi)\|^2$ is plurisubharmonic.
    We thus obtain a singular Griffiths positive metric $h = h^{**}$ on $E$ (cf. Lemma~\ref{lma:SingularDual}).
    At the points where $s_1,\dots,s_m$ span $E$, the metric $h$ is smooth and non-degenerate,
    and thus, if $s_1,\dots,s_m$ span $E$ on $X \setminus V$ for some subvariety $V$ of $X$ with $\codim(V) \geq k$,
    then the conditions of Theorem~\ref{thm:ContThm} are satisfied to define the Chern current $c_k(E,h)$.
\end{ex}

Using the Chern currents of Theorem \ref{thm:ContThm} it is straightforward to define Chern characters associated with our singular metrics.
For a smooth metric $h$ on a holomorphic vector bundle $E$, the full Chern character can be defined as $ch(E,h) = \tr \exp( (i/2\pi)\Theta(E,h))$.
The Chern character has the advantage that the formula for the Chern character of a tensor product is very simple,
$ch(E\otimes F, h\otimes g) = ch(E,h) \wedge ch(F,g)$. In general, the full Chern character can be expressed as a polynomial in the Chern forms
with integer coefficients.
Since for singular metrics we need to restrict the degree of the products of the Chern forms, we will consider the part $ch_k(E,h)$
of the Chern character of a fixed degree $(k,k)$, which can thus be expressed as:
\begin{equation} \label{eq:ch_k}
    ch_k(E,h) := \tr \left( \frac{(i \Theta)^k}{(2\pi)^k k!} \right) = \sum_K b_K c_{k_1}(E,h) \wedge \dots \wedge c_{k_m}(E,h),
\end{equation}
where $b_K$ are integers, and $K = (k_1,\dots,k_m)$ runs over all partitions of $k$, i.e.\ $k_1,\dots,k_m$ are positive integers and $k_1 + \cdots + k_m = k$.

\begin{df}\label{df:ChernCharacter}
    Let $h$ be a singular, Griffiths positive or negative, hermitian metric on a holomorphic vector bundle $E\to X$. Assume that
    there exists a subvariety $V$ of $X$ with $\codim(V) \geq k$ such that $h$ is continuous and non-degenerate outside of $V$.
    Then the $k$:th \emph{Chern character} $ch_k(E,h)$ is defined by \eqref{eq:ch_k}, where the products of Chern forms in
    the sum in the right-hand side of \eqref{eq:ch_k} have meaning by Theorem~\ref{thm:ContThm}.
\end{df}

The requirement in Theorem~\ref{thm:ContThm} that $h$ should be continuous outside of its degenerate set can be dropped for the existence and uniqueness (see Theorem \ref{thm:main} below), but then we will no longer know that $c_k(E,h_{\varepsilon})$ converges to $c_k(E,h)$. This property is important since an immediate corollary is that basic (local) properties for smooth Chern forms also hold in the singular setting:

\begin{cor}\label{cor:Properties}
Let $X$ be a complex manifold and let $(E,h)\to X$ and $(F,g)\to X$ be two holomorphic vector bundles, where $h$ and $g$ both are singular, Griffiths positive or negative hermitian metrics.

If there exists a subvariety $V$ of $X$ with $codim(V)\geq k$ such that $h$ is continuous and non-degenerate outside of $V$, then:\vspace{0.1cm}\\
(a) The Chern current $c_k(E^*,h^*)$ (where $h^*$ denotes the dual metric on the dual bundle $E^*$) can be defined as in Theorem \ref{thm:ContThm}, and
$$c_k(E^*,h^*)=(-1)^kc_k(E,h).$$
(b) For a complex manifold $Y$ and any holomorphic submersion $f:Y\to X$, the Chern current $c_k(f^*E,f^*h)$ can be defined as in Theorem \ref{thm:ContThm} and
$$c_k(f^*E,f^*h)=f^*c_k(E,h).$$
(c) If there exists a subvariety $V$ of $X$ with $codim(V)\geq k$ such that both $h$ and $g$ are continuous and non-degenerate outside of $V$, then $c_k(E\oplus F,h\oplus g)$ can be defined as in Theorem \ref{thm:ContThm}, and
$$c_k(E\oplus F,h\oplus g)=\sum_{j=0}^k c_j(E,h) \wedge c_{k-j}(F,g),$$
where the products in the sum are defined by Theorem~\ref{thm:ContThm}. \\
(d) If there exists a subvariety $V$ of $X$ with $codim(V)\geq k$ such that both $h$ and $g$ are continuous and non-degenerate outside of $V$, then
$ch_k(E\otimes F,h\otimes g)$ can be defined as in Definition~\ref{df:ChernCharacter}, and
$$ch_k(E\otimes F,h\otimes g)=\sum_{j=0}^k ch_j(E,h) \wedge ch_{k-j}(F,g),$$
where the products in the sum are defined by \eqref{eq:ch_k} and Theorem~\ref{thm:ContThm}.
\end{cor}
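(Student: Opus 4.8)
The plan is to reduce every identity to its classical counterpart for smooth metrics and then pass to the limit using the weak convergence supplied by Theorem~\ref{thm:ContThm} and its product version. For smooth Hermitian metrics all four identities are standard: the curvature of the dual satisfies $\Theta(E^*,h^*)=-{}^t\Theta(E,h)$, whence $c_k(E^*,h^*)=(-1)^k c_k(E,h)$; curvature, and hence the Chern forms, commute with pullback; the curvature of a direct sum is block-diagonal, giving the Whitney formula; and $ch(E\otimes F)=ch(E)\wedge ch(F)$. Thus the real content is, for each operation $T\in\{(\cdot)^*,\,f^*,\,\oplus,\,\otimes\}$, to check two things: that the resulting metric again satisfies the hypotheses of Theorem~\ref{thm:ContThm} (correct Griffiths sign, continuity and non-degeneracy outside a subvariety of codimension $\geq k$), and that $T$ carries a local regularizing sequence of the input metric(s) to an admissible smooth approximating sequence of the output metric that still converges locally uniformly outside the relevant subvariety. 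Since the convergence in Theorem~\ref{thm:ContThm} does not depend on the choice of such a sequence, applying the smooth identity to the approximations and letting $\varepsilon\to0$ yields the claim.

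For (a), if $h$ is Griffiths positive then $h^*$ is Griffiths negative by Definition~\ref{df:Gr}, and $h^*$ is continuous and non-degenerate exactly where $h$ is, so $c_k(E^*,h^*)$ is defined. For a regularizing sequence $h_\varepsilon\to h$ locally uniformly outside $V$, the duals $h_\varepsilon^*$ are smooth, Griffiths negative, and converge to $h^*$ locally uniformly outside $V$ because matrix inversion is continuous on non-degenerate forms; applying $c_k(E^*,h_\varepsilon^*)=(-1)^k c_k(E,h_\varepsilon)$ and passing to the limit gives (a). For (b), $f^*h$ is Griffiths positive or negative since $\|f^*u\|^2_{f^*h}=\|u\|^2_h\circ f$ and the composition of a plurisubharmonic function with the holomorphic map $f$ is plurisubharmonic; as $f$ is a submersion, $f^{-1}(V)$ is a subvariety with $\codim f^{-1}(V)=\codim V\geq k$, so $c_k(f^*E,f^*h)$ is defined. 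The pulled-back sequence $f^*h_\varepsilon$ converges to $f^*h$ locally uniformly outside $f^{-1}(V)$, and from $c_k(f^*E,f^*h_\varepsilon)=f^*c_k(E,h_\varepsilon)$ one passes to the limit using that pullback of currents along a submersion is well defined and weakly continuous.

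For (c), assuming $h$ and $g$ share a Griffiths sign, $h\oplus g$ has that sign as well and is continuous and non-degenerate outside $V$, so $c_k(E\oplus F,h\oplus g)$ is defined. Applying the smooth Whitney formula to $h_\varepsilon\oplus g_\varepsilon$, the left-hand side converges by Theorem~\ref{thm:ContThm}, while each term $c_j(E,h_\varepsilon)\wedge c_{k-j}(F,g_\varepsilon)$ converges to $c_j(E,h)\wedge c_{k-j}(F,g)$ by the product version of Theorem~\ref{thm:ContThm} (with $E_1=E$, $E_2=F$, $k_1=j$, $k_2=k-j$, total degree $k\leq\codim V$). Part (d) is identical in structure once one knows that $h\otimes g$ again has a definite Griffiths sign: for smooth metrics this follows from $\Theta(E\otimes F)=\Theta(E)\otimes\Id_F+\Id_E\otimes\Theta(F)$ by writing the Griffiths form at $\sum_{\lambda,s}u_{\lambda s}\,e_\lambda\otimes f_s$ as a sum over $s$ of Griffiths forms of $E$ plus a sum over $\lambda$ of Griffiths forms of $F$, each nonnegative; in the singular case one approximates $h,g$ by increasing sequences of smooth Griffiths positive metrics (Proposition~\ref{prop:appr}), whose tensor products increase to $h\otimes g$. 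One then applies the smooth multiplicativity $ch_k(E\otimes F,h_\varepsilon\otimes g_\varepsilon)=\sum_j ch_j(E,h_\varepsilon)\wedge ch_{k-j}(F,g_\varepsilon)$ and lets $\varepsilon\to0$, the right-hand side converging monomial-by-monomial in the Chern forms of $E$ and $F$ via the product version of Theorem~\ref{thm:ContThm}.

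The obstacles are analytic rather than algebraic. The step I expect to require most care is verifying that the induced sequences $h_\varepsilon^*$, $f^*h_\varepsilon$, $h_\varepsilon\oplus g_\varepsilon$, $h_\varepsilon\otimes g_\varepsilon$ are genuinely admissible approximations to which the convergence statement of Theorem~\ref{thm:ContThm} applies, and that the output metrics meet all of its hypotheses. Here the two substantive facts are that the tensor product preserves the Griffiths sign, which is needed even to define the left-hand side in (d), and that pullback of currents along a submersion commutes with weak limits, which is needed in (b). All remaining steps are routine once these are established.
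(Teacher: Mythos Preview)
Your approach is correct and matches the paper's own treatment: the paper presents this corollary as an immediate consequence of the weak convergence \eqref{eq:weak_conv} in Theorem~\ref{thm:ContThm}, without writing out a proof, and your plan of applying the classical smooth identities to the regularizations and passing to the limit is exactly what is intended. You have also correctly identified the points that need checking (preservation of Griffiths sign under each operation, codimension of the degeneracy locus, and that the induced approximating sequences are admissible for Theorem~\ref{thm:ContThm}), and your treatment of these is sound.
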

\bigskip
\begin{remark}
    By arguing as in \cite{PT}, Lemma 2.3.2 (a), we get that for any complex manifold $Y$, and any holomorphic mapping $f:Y\to X$, the pullback $f^* h$ of a singular, Griffiths positive (negative), hermitian metric $h$, is also Griffiths positive (negative). In Corollary \ref{cor:Properties} (b), we nevertheless need to restrict ourselves to submersions, since otherwise $f^*c_k(E,h)$, the pullback of a \emph{current}, is not well-defined. Also $f$ being a submersion ensures that the codimension requirements needed to define $c_k(f^*E,f^*h)$ are fulfilled.
\end{remark}

As already mentioned, the continuity requirements of Theorem \ref{thm:ContThm} can be relaxed, at the price of losing the weak convergence (\ref{eq:weak_conv}). Thus, for a singular hermitian metric $h$ (under appropriate codimension requirements on the degeneracy set), it is possible to define Chern currents by only requiring the metric to be curved as in Definition~\ref{df:Gr}.

This is achieved by expressing the Chern currents in terms of so called Segre currents. In the smooth setting these are defined as the inverses of the Chern forms (see section 2 below) and can be expressed recursively through,
\begin{equation}\label{eq:recursive}
s_k(E,h)+s_{k-1}(E,h) \wedge c_1(E,h)+\ldots+c_k(E,h)=0,\quad k=1,\ldots,n.
\end{equation}
Hence, any Chern form can be expressed as a sum of products of Segre forms
\begin{equation} \label{eq:ChernSegre}
c_k(E,h) = \sum_K a_K s_{k_1}(E,h) \wedge \dots \wedge s_{k_m}(E,h)
\end{equation}
where $a_K$ are integers, and where $K = (k_1,\dots,k_m)$ runs over all partitions of $k$, i.e.\ $k_1,\dots,k_m$ are positive integers and $k_1 + \cdots + k_m = k$.

Now as already mentioned, a singular Griffiths positive metric $h$ can always be locally approximated by an increasing sequence $\{h_\varepsilon\}$ of Griffiths positive metrics (see Proposition \ref{prop:appr}). The advantage of Segre forms over Chern forms in the singular setting is that we can show that for products of Segre forms with different regularizations, under appropriate codimension restrictions on the singular set of $h$, there exists subsequences such that the iterated limit
\begin{equation}\label{eq:IteratedLimit}
\lim_{\varepsilon^m_\nu \to 0} \cdots \lim_{\varepsilon^1_\nu \to 0} s_{k_1}(E,h_{\varepsilon^1_\nu}) \w \cdots \w s_{k_m}(E,h_{\varepsilon^m_\nu})
\end{equation}
exists in the sense of currents. This limit is furthermore independent of the regularizations, and will thus yield a global current which we will denote by $s_{k_1}(E,h)\wedge\cdots\wedge s_{k_m}(E,h)$. These currents can then be used to define the Chern current $c_k(E,h)$ through (\ref{eq:ChernSegre}). The precise statement of the conditions under which this construction works is as follows.

\begin{thm}\label{thm:main}
Let $E\to X$ be a holomorphic vector bundle over a complex manifold $X$, and let $h$ denote a singular, Griffiths positive, hermitian metric on $E$.

Assume that there is some subvariety $V$ of $X$ with $codim(V)\geq k$ such that $L(\log\det h^*)\subseteq V$, where $L(\log\det h^*)$ denotes the unbounded locus of $\log\det h^*$ (see Definition~\ref{df:ULocus} and Remark~\ref{remark:UEx} below).

Then the $k$:th Chern current of $E$ associated with $h$, $c_k(E,h)$, can be defined through (\ref{eq:ChernSegre}), where the Segre products $s_{k_1}(E,h)\wedge\cdots\wedge s_{k_m}(E,h)$ are defined by the iterated limit (\ref{eq:IteratedLimit}). The Chern current $c_k(E,h)$ will be a closed $(k,k)$-current of order 0 with locally finite mass in $X$.

If $h$ is instead a singular, Griffiths negative, hermitian metric, then the same result holds if $L(\log \det h^*)$ is replaced by $L(\log \det h)$ throughout the statement.
\end{thm}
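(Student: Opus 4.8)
The plan is to realise the Segre forms as fibre integrals over the projectivised bundle and to take the limit of these fibre integrals directly on $X$, controlling everything by Bedford--Taylor--Demailly theory.

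\emph{Set-up and reduction.} First I would reduce to the Griffiths positive case. The whole construction is symmetric under the interchange $E\leftrightarrow E^*$, $h\leftrightarrow h^*$, $\P(E^*)\leftrightarrow\P(E)$, and $\log\det h^*\leftrightarrow\log\det h$, so the Griffiths negative statement follows from the positive one; assume then that $h$ is Griffiths positive, so that $h^*$ is Griffiths negative. Let $\pi:\P(E^*)\to X$ be the bundle of lines in $E^*$, of relative dimension $r-1$, and let $\Ok(1)$ be the tautological line bundle, whose dual $\Ok(-1)\subset\pi^*E^*$ carries the metric induced by $h^*$. Since $h^*$ is Griffiths negative, $\Ok(-1)$ is negative and $\Ok(1)$ inherits a positive singular metric whose local weight $\psi$ is plurisubharmonic; write $\hat\alpha:=c_1(\Ok(1))\ge 0$ for its closed positive Chern current. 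For smooth metrics the classical pushforward formula gives (up to the standard sign and duality conventions) $s_k(E,h)=\pi_*(\hat\alpha^{\,r-1+k})$, and, passing to the $m$-fold fibre product $p:W:=\P(E^*)\times_X\cdots\times_X\P(E^*)\to X$ with projections $\mathrm{pr}_i$ and $\beta_i:=\mathrm{pr}_i^*\hat\alpha$, a product of Segre forms is $s_{k_1}(E,h)\w\cdots\w s_{k_m}(E,h)=p_*\big(\beta_1^{\,a_1}\w\cdots\w\beta_m^{\,a_m}\big)$ with $a_i=r-1+k_i$. These \emph{pushforwards}, not the products on $W$ themselves, are the objects whose limits I would take; note that the integrand is positive, so each smooth Segre form and each smooth Segre product is a positive form.

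\emph{Mass bounds and where the codimension enters.} The main analytic input is a uniform local mass bound for $p_*\big(\beta_{1,\varepsilon^1}^{\,a_1}\w\cdots\w\beta_{m,\varepsilon^m}^{\,a_m}\big)$, independent of the regularisation parameters. I would obtain it by Chern--Levine--Nirenberg and Demailly-type estimates, localised near $V$: away from $V$ all the weights are locally bounded, while near $V$ the unbounded locus of the weight on $\Ok(1)$ lies over $L(\log\det h^*)\subseteq V$, so that every singularity of the integrand projects into $V$. The key mechanism is that the $m(r-1)$ fibre powers are integrated out along the compact fibres of $p$, so that only the base codimension $\ge k$ matters: fibre integration absorbs the $r-1$ vertical powers in each factor and lowers the effective Monge--Ampère degree transverse to $V$ to at most $k$, which is precisely the Bedford--Taylor--Demailly admissible range for potentials whose unbounded locus has codimension $\ge k$. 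This yields weak-$*$ precompactness of the family.

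\emph{Convergence, iterated limits, and assembling $c_k$.} By Proposition~\ref{prop:appr} a Griffiths positive $h$ is locally an increasing limit of smooth Griffiths positive metrics $h_\varepsilon$, hence the induced weights on $\Ok(1)$ \emph{decrease} to the weight of $h^*$. I would then invoke Demailly's continuity theorem for Monge--Ampère operators along decreasing sequences of plurisubharmonic functions, applied to one factor of $W$ at a time with the remaining, already regularised, factors fixed and locally bounded; combined with the mass bounds this gives existence of the iterated limit \eqref{eq:IteratedLimit}, its independence of the chosen regularising sequences, and the fact that the limit is a closed positive current of locally finite mass (positivity and closedness pass to weak limits, order $0$ is automatic for positive currents, and patching the local approximations and extracting subsequences uses weak-$*$ compactness). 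Pushing forward by the proper map $p$ preserves all these properties and defines the global Segre product $s_{k_1}(E,h)\w\cdots\w s_{k_m}(E,h)$. Finally \eqref{eq:ChernSegre} assembles $c_k(E,h)$ as a finite integer combination of such Segre products; being a difference of closed positive $(k,k)$-currents of locally finite mass, it is itself a closed $(k,k)$-current of order $0$ with locally finite mass.

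\emph{Main obstacle.} The hard part will be the second step: establishing the uniform local mass bound and the admissibility of the limiting products under only the hypothesis $\codim L(\log\det h^*)\ge k$. The subtlety is that the naive power $\hat\alpha^{\,r-1+k}$ need not exist as a current on $\P(E^*)$, since the unbounded locus of $\psi$ can fail the codimension test as soon as the degeneration of $h^*$ has corank $\ge 2$; one therefore cannot simply apply Bedford--Taylor on the projective bundle and push forward. The argument must instead retain the pushforward to $X$ throughout and make rigorous the heuristic that fibre integration reduces the effective transverse degree to $k$ -- uniformly in $\varepsilon$ and compatibly with the several decreasing limits taken one factor at a time. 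Getting this estimate, rather than the formal bookkeeping around it, is the heart of the proof.
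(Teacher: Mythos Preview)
Your overall architecture matches the paper's: pass to $\P(E)$ via Proposition~\ref{prop:Segre}, use the positivity of the induced weight on $\Ok_{\P(E)}(1)$ (Lemma~\ref{lma:inducedPositive}), regularise by Proposition~\ref{prop:appr}, and take an iterated limit one Segre factor at a time. Your packaging via the $m$-fold fibre product is cosmetically different from the paper's induction on $m$ (Proposition~\ref{prop:segre_products}), but once you take limits one factor at a time the two arguments are essentially the same: at each step one wedges an already-constructed closed positive current $T$ on $X$ with $(-1)^{k_m} s_{k_m}(E,h_\varepsilon)$ and must show that the result converges.

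The genuine gap is precisely at the point you yourself flag as the main obstacle. You propose to obtain the uniform local mass bound by ``Chern--Levine--Nirenberg and Demailly-type estimates'', and to justify convergence by ``Demailly's continuity theorem along decreasing sequences \dots\ applied to one factor of $W$ at a time with the remaining, already regularised, factors fixed and locally bounded''. Neither invocation works as stated. The standard CLN and Demailly continuity results (Theorems~\ref{thm:PPT} and~\ref{thm:PPT2} here) require either local boundedness of the potentials or the appropriate codimension hypothesis \emph{on the space where the product is formed}; as you note, on $\P(E)$ (or on $W$) the unbounded locus $\pi^{-1}(V)$ has codimension only $\geq k$ while the Monge--Amp\`ere degree is $k+r-1$, so the hypotheses fail. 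And after the first limit is taken, the remaining factors are \emph{not} locally bounded near $\pi^{-1}(V)$, so the inductive step as you phrase it does not go through either. Your heuristic that ``fibre integration absorbs the $r-1$ vertical powers'' is correct in spirit, but you have not supplied any mechanism that turns it into an actual estimate.

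The paper's mechanism is the content of Lemma~\ref{lma:locallyBounded}: one does not try to bound $\int T\wedge (-1)^k s_k(E,h_\varepsilon)\wedge\omega^{n-k}$ directly, but replaces $\omega^{n-k}$ by a carefully built bump form $\beta=\chi_1(z')\chi_2(z'')\,\beta_0$, with coordinates chosen so that $V$ misses the cylindrical shell $K_\delta=\overline{B'}\times(\overline{B''}\setminus(1-\delta)B'')$. One then adds and subtracts a fixed smooth reference $(dd^c\varphi_1)^{k+r-1}$, uses $d\chi_1\wedge\beta_0=0$ to integrate by parts, and lands on an integral supported in $\pi^{-1}(K_\delta)$, where $\varphi$ \emph{is} locally bounded and Theorem~\ref{thm:PPT2} applies. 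This is the step that makes the heuristic rigorous, and it (together with the uniqueness Lemma~\ref{lma:Unique}) is what your proposal is missing.
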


Although Theorem~\ref{thm:ContThm} is not formulated in terms of Segre forms, its proof also uses the formula \eqref{eq:ChernSegre} for expressing Chern forms of a smooth regularization of the metric in terms of Segre forms. The key difference between the proof of Theorem~\ref{thm:ContThm} and Theorem~\ref{thm:main} is that when $h$ is continuous outside of the non-degeneracy set, then one can use the same regularization in each of the Segre forms and can just take a single limit, while in the setting of Theorem~\ref{thm:main}, one needs to have different regularizations of the metric in each of the Segre forms, and take an iterated limit as in \eqref{eq:IteratedLimit} to see that the limits exist.
\begin{remark}
If the conditions of Theorem~\ref{thm:ContThm} are fulfilled to define the current $c_k(E,h)$, then the conditions of Theorem~\ref{thm:main} are also fulfilled, and one would a priori obtain two different currents $c_k(E,h)$. However, it follows from the respective proofs that these currents indeed coincide.
\end{remark}

Finally, in the smooth setting the most salient feature of Chern forms is of course that they are closed forms whose cohomology classes (in the de Rham cohomology group of smooth forms) are invariants of the vector bundle, i.e.\ independent of the metric. As these forms also define cohomology classes in the de Rham cohomology group of currents, it is natural to ask whether or not the cohomology classes of the Chern currents of Theorem \ref{thm:main} coincide with the usual Chern classes.

Since this cohomology class invariance is a global property, the singular counterpart is not a direct consequence of the weak convergence in either Theorem~\ref{thm:ContThm} or Theorem \ref{thm:main}.
Our next result nevertheless shows that the Chern currents lie in the right cohomology classes when the manifold is compact by using Demailly's regularization in \cite{DemaillyReg}.

\begin{thm}\label{thm:Cohomology}
Let $h$ be a singular hermitian metric on a holomorphic vector bundle $E\to X$ satisfying the assumptions of Theorem \ref{thm:main} so that the Chern current $c_k(E,h)$ can be defined.
If $X$ is compact, then
$$[c_k(E,h)]=c_k(E)$$
where $c_k(E)=[c_k(E,h_0)]$ is the usual Chern class defined by any smooth metric $h_0$ on $E$.
\end{thm}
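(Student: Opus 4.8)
The plan is to reduce the global cohomological identity to a computation of pairings against closed test forms, and then to supply the genuinely \emph{global} smooth approximants---which the local convolution regularizations underlying Theorem~\ref{thm:main} cannot provide---by applying Demailly's regularization on the projectivized bundle.

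First I would use that on a compact manifold $X$ of dimension $n$ the currents resolve the constant sheaf, so currents and smooth forms compute the same de Rham cohomology, and the class of a closed $(k,k)$-current $T$ is determined via Poincar\'e duality by the numbers $\int_X T\w\beta$, as $\beta$ ranges over closed smooth $(n-k,n-k)$-forms. Hence it suffices to show
\[
\int_X c_k(E,h)\w\beta=\langle c_k(E)\cup[\beta],[X]\rangle .
\]
Expanding $c_k(E,h)$ through \eqref{eq:ChernSegre} and using that the Chern--Segre inversion holds at the level of classes, $c_k(E)=\sum_K a_K\,s_{k_1}(E)\cup\cdots\cup s_{k_m}(E)$, this reduces to proving $\int_X(s_{k_1}(E,h)\w\cdots\w s_{k_m}(E,h))\w\beta=\langle s_{k_1}(E)\cup\cdots\cup s_{k_m}(E)\cup[\beta],[X]\rangle$ for each partition $K$. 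As stressed in the introduction, this is \emph{not} a consequence of the weak convergence in Theorem~\ref{thm:main}: there the approximating Segre forms come from \emph{local} convolution regularizations, so the pre-limit integrands are not globally defined forms and one cannot invoke cohomological invariance of their integrals.

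To obtain global approximants I would pass to $\pi\colon\P(E)\to X$ and the tautological line bundle $L$ for which the fibre integrals $\pi_*\big(c_1(L,\hat g)^{\,r-1+k}\big)$ recover the smooth Segre forms $s_k(E,g)$; Griffiths positivity of $h$ makes the induced singular metric on $L$ semipositively curved, so $T:=c_1(L,\hat h)$ is a closed positive $(1,1)$-current lying in the fixed class $u:=c_1(L)$ (for a line bundle any two singular metrics differ by a global weight, whence $[T]=u$ independently of $h$; the Griffiths negative case follows by duality and Corollary~\ref{cor:Properties}). The singular Segre currents are then the push-forwards of the Bedford--Taylor--Demailly Monge--Amp\`ere powers of $T$, well defined because the unbounded locus of $\hat h$ lies over $V$ with $\codim V\ge k$. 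Now I would apply Demailly's regularization \cite{DemaillyReg} to $T$, obtaining smooth closed forms $T_\eps\to T$ in the \emph{same} class $u$ with $T_\eps\ge-\delta_\eps\,\omega$ and $\delta_\eps\downarrow0$. Setting $\sigma^\eps_k:=\pi_*(T_\eps^{\,r-1+k})$ yields smooth closed forms on $X$ in the topological class $s_k(E)$ (since push-forward is defined on cohomology and $[T_\eps]=u$), and---the decisive gain over convolution---their integrals against closed forms are true cohomological invariants. Thus for every partition $K$ the pre-limit pairings $\int_X\sigma^{\eps^1}_{k_1}\w\cdots\w\sigma^{\eps^m}_{k_m}\w\beta$ are independent of all the parameters $\eps^i$ and equal $\langle s_{k_1}(E)\cup\cdots\cup s_{k_m}(E)\cup[\beta],[X]\rangle$.

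It then remains to pass to the iterated limit. Since the iterated limit \eqref{eq:IteratedLimit} exists as a current and, by the proof of Theorem~\ref{thm:main}, is independent of the admissible regularizing sequence, I would verify that the $T_\eps$ are admissible, so that the iterated limits of $\sigma^{\eps^1}_{k_1}\w\cdots\w\sigma^{\eps^m}_{k_m}$ reproduce the currents $s_{k_1}(E,h)\w\cdots\w s_{k_m}(E,h)$, and that pairing with the fixed smooth $\beta$ commutes with these limits. Feeding the $\eps^i$-independent value through the iterated limit, and summing over $K$ with the coefficients $a_K$, gives $[c_k(E,h)]=c_k(E)$. The main obstacle I anticipate is exactly this compatibility: one must confirm that Demailly's global, class-preserving regularizations lie within the class of regularizations for which the powers $T_\eps^{\,r-1+k}$ and their pushed-down products converge to the currents of Theorem~\ref{thm:main}. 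Here the codimension hypothesis $\codim V\ge k$ does the essential work, controlling Lelong numbers and preventing mass from concentrating on the unbounded locus under regularization; checking the possibly non-K\"ahler case of $\P(E)$ against the hypotheses of \cite{DemaillyReg}, should $X$ fail to be projective, is a secondary technical point.
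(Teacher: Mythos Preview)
Your proposal is correct and follows essentially the same route as the paper: pass to $\P(E)$, apply Demailly's global regularization from \cite{DemaillyReg} to the induced metric on $\Ok_{\P(E)}(1)$ to obtain class-preserving smooth approximants, push forward to get global Segre-type forms in the right cohomology classes, and conclude via Poincar\'e duality after verifying that the iterated limits of these approximants reproduce the Segre currents of Theorem~\ref{thm:main}. One technical correction: the version of Demailly's theorem actually used gives only a \emph{fixed} quasi-positivity bound $dd^c\tpe\geq -u$ (not $T_\eps\geq -\delta_\eps\omega$ with $\delta_\eps\downarrow 0$, which would require vanishing Lelong numbers), and the paper resolves your anticipated ``main obstacle'' by a local twisting trick---adding a sufficiently positive smooth weight $a$ on $\Ok_{\P(E)}(\mu)$ so that $\tpe+a$ is genuinely plurisubharmonic, then expanding $(dd^c\tpe)^{k+r-1}=((dd^c(\tpe+a))-\alpha)^{k+r-1}$ binomially and applying the Bedford--Taylor--Demailly machinery term by term (Propositions~\ref{prop:quasi_segre_products_1} and~\ref{prop:quasi_segre_products_2}).
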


\begin{remark} Chern classes of certain singular hermitian metrics have been studied earlier by Mumford in \cite{Mum}:
Let $X$ be a Zariski open set of a projective variety $\bar X$ such that $\bar X\minus X$ is a divisor with normal crossings, and let $\bar E$ be a vector bundle on $\bar X$.
In \cite{Mum}*{\S\,1}, Mumford studies smooth hermitian metrics on $E=\bar E|_X$
which satisfy certain growth conditions when approaching $\bar X\minus X$.
These metrics could then be considered as singular metrics on $\bar E$.
Mumford then uses the classical Bott-Chern theory and the growth assumptions to show that trivial extensions of the Chern forms on $X$  to all of $\bar X$ represent the Chern classes of $\bar E$ (in all degrees). 

There is no apparent relation between the growth assumptions in \cite{Mum} and the positivity assumptions that we consider.
Our approach to defining Chern currents is different to the one in \cite{Mum}, where ours is based on using the positivity of the
curvature to be able to use pluripotential theoretical methods in the spirit of Bedford-Taylor-Demailly. This approach
might in particular produce currents with mass on the singular set of the metric, in contrast to the case in \cite{Mum}.
\end{remark}

The paper is organized as follows. Section 2 contains a short introduction to Segre forms. In section 3 we collect a few preliminary results that will be needed in the proofs of Theorem \ref{thm:ContThm} and Theorem \ref{thm:main}, and some basic facts from pluripotential theory. Section 4 is devoted to the proofs of Theorem \ref{thm:ContThm} and Theorem \ref{thm:main}. Finally, in section 5 we end by showing that on a compact manifold, the Chern currents of Theorem \ref{thm:main} are in the right cohomology class. 

\section*{Acknowledgements}
\noindent It is a pleasure to thank Bo Berndtsson and Mihai P{\u{a}}un for expressing interesting in our work and for valuable comments.
We also thank the anonymous referee for helpful suggestions which helped to improve the paper.

\section{Segre forms}
\label{sec:segre-forms}

\noindent Let $E \to X$ be a holomorphic vector bundle with a smooth metric $h$. By definition, the \emph{total Segre form} $s(E,h)=1+s_1(E,h)+\cdots+s_n(E,h)$ associated with $(E,h)$ is the multiplicative inverse of the total Chern form $c(E,h)=1+c_1(E,h)+\cdots+c_n(E,h)$. Hence, as mentioned in the introduction, they can be expressed recursively through,
$$s_k(E,h)+s_{k-1}(E,h)\wedge c_1(E,h)+\cdots+c_k(E,h)=0,\quad k=1,\ldots,n.$$
The first three Segre forms, for example, are
{\setlength\arraycolsep{2pt}
\begin{eqnarray}
s_1(E,h)&=&-c_1(E,h),\nonumber\\
s_2(E,h)&=&c_1(E,h)^2-c_2(E,h),\nonumber\\
s_3(E,h)&=&-c_1(E,h)^3+2c_1(E,h) \wedge c_2(E,h)-c_3(E,h),\nonumber
\end{eqnarray}}
\!\!and expressing the first three Chern forms in terms of these Segre forms yields,
{\setlength\arraycolsep{2pt}
\begin{eqnarray}
c_1(E,h)&=&-s_1(E,h),\nonumber\\
c_2(E,h)&=&s_1(E,h)^2-s_2(E,h),\nonumber\\
c_3(E,h)&=&-s_1(E,h)^3+2s_1(E,h)\wedge s_2(E,h)-s_3(E,h).\nonumber
\end{eqnarray}}

Our interest in the Segre forms stems from the fact that they turn out to be closely related to the projectivized bundle, $\pi:\P(E)\to X$, associated with $E$. Recall that this fiber bundle is constructed by letting $\P(E)_x:=\P(E_x^*)$ for each $x\in X$ (the projectivization of an $r$-dimensional vector space, where $E^*$ denotes the dual bundle of $E$). The pullback bundle $\pi^*E^*\to\P(E)$ will then carry a tautological sub-bundle $\Ok_{\P(E)}(-1)$, where the notation is justified by the fact that fiberwise this is nothing but $\Ok(-1)$ over $\P^{r-1}$. The global holomorphic sections of $\Ok_{\P(E)}(1)$ (the dual of $\Ok_{\P(E)}(-1)$), over any fiber are in one-to-one correspondence with linear forms on $E_x^*$, i.e.\ with the elements of $E_x$ (this is the reason for projectivizing $E^*$ instead of $E$).

Now if $h$ is a smooth metric on $E$, then $\Ok_{\P(E)}(1)$ can be equipped with a natural metric which we will denote by $e^{-\varphi}$. We let $\Phi$ denote the first Chern form of this metric, i.e.
$$\Phi:=\frac{i}{2\pi}\Theta\big(\Ok_{\P(E)}(1),e^{-\varphi}\big).$$
The following proposition is at the center of our interest in Segre forms:
\begin{prop}\label{prop:Segre}
Let $h$ be a smooth hermitian metric on a holomorphic vector bundle $E \to X$.
Then
$$s_k(E,h) = (-1)^k\pi_*\big(\Phi^{k+r-1}\big).$$
\end{prop}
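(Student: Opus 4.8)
To prove Proposition~\ref{prop:Segre}, I would establish the identity
$$s_k(E,h) = (-1)^k \pi_*\big(\Phi^{k+r-1}\big)$$
by comparing the total Chern forms of $E$ with the pushforward of the powers of $\Phi$, using the standard Grothendieck relation that defines Chern classes via the projectivized bundle. The key geometric input is that on $\P(E)$ the class $\Phi = c_1(\Ok_{\P(E)}(1), e^{-\varphi})$ satisfies the fundamental polynomial relation
$$\Phi^r + \pi^* c_1(E,h) \w \Phi^{r-1} + \dots + \pi^* c_r(E,h) = 0,$$
at the level of forms (not just cohomology), which holds because $e^{-\varphi}$ is the metric on $\Ok_{\P(E)}(1)$ induced from $h$ and the tautological sequence is metrically compatible. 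The hard part will be verifying this relation at the level of \emph{differential forms}, since the usual Grothendieck relation is a cohomological statement; one must use that the natural metric on the tautological bundle makes the curvature computation exact fiberwise.

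\emph{Step 1.}
First I would recall the projection formula and the dimension of the fibers: since $\dim \P(E)_x = r-1$, the pushforward $\pi_*$ lowers bidegree by $(r-1, r-1)$, so $\pi_*(\Phi^{k+r-1})$ is indeed a $(k,k)$-form on $X$, matching the bidegree of $s_k(E,h)$. I would also normalize using $\pi_*(\Phi^{r-1}) = 1 = s_0(E,h)$, which is the statement that $\Phi$ restricts to the Fubini--Study class on each fiber $\P^{r-1}$ and integrates to $1$.

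\emph{Step 2.}
Next I would push forward the fundamental relation. Multiplying the degree-$r$ polynomial relation for $\Phi$ by $\Phi^{k-1}$ and applying $\pi_*$, together with the projection formula $\pi_*(\pi^*\alpha \w \beta) = \alpha \w \pi_*\beta$, yields a recursion expressing $\pi_*(\Phi^{k+r-1})$ in terms of the $c_j(E,h)$ and lower pushforwards $\pi_*(\Phi^{j+r-1})$. Setting $\sigma_k := (-1)^k \pi_*(\Phi^{k+r-1})$, this recursion is precisely the defining recursion \eqref{eq:recursive} for the Segre forms,
$$\sigma_k + \sigma_{k-1} \w c_1(E,h) + \dots + \sigma_0 \w c_k(E,h) = 0,$$
once the signs are tracked carefully. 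Since the $\sigma_k$ and the $s_k(E,h)$ satisfy the same recursion with the same initial value $\sigma_0 = s_0 = 1$, they coincide, proving the proposition.

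\emph{Main obstacle.}
The main obstacle, as noted, is justifying the fundamental relation for $\Phi$ at the level of forms rather than cohomology classes. I expect this to follow from a local curvature computation: in a local holomorphic frame for $E$, the metric $e^{-\varphi}$ on $\Ok_{\P(E)}(1)$ is the Fubini--Study-type metric twisted by $h$, and a direct calculation of $\frac{i}{2\pi}\Theta$ shows that the restriction of $\Phi$ to each fiber is the Fubini--Study form while the mixed and base directions assemble exactly into $\pi^* c_j(E,h)$. This is a classical fact for smooth metrics and the computation, though somewhat involved, is entirely routine; the essential point is that the smoothness of $h$ guarantees all forms are genuine smooth forms so that the wedge products and pushforwards are unambiguous.
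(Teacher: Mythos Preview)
The paper does not give its own proof of this proposition; immediately after the statement it simply cites \cite{Mou}, \cite{G}, and \cite{Diverio}. So the comparison is with the arguments found in those references.

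Your main obstacle is genuine, and your proposed resolution of it is where the argument breaks. The form-level Grothendieck relation
\[
\Phi^r + \pi^* c_1(E,h)\w\Phi^{r-1} + \cdots + \pi^* c_r(E,h) = 0
\]
does \emph{not} hold as an identity of differential forms, even for the metric on $\Ok_{\P(E)}(1)$ induced from $h$. The reason is that the tautological exact sequence $0\to \Ok_{\P(E)}(-1)\to \pi^*E^*\to Q\to 0$, equipped with the metrics induced by $h^*$, is not orthogonally split, so the Whitney product formula for Chern \emph{forms} acquires a nonzero $dd^c$-exact Bott--Chern correction term. Indeed, Mourougane's paper \cite{Mou}---the primary reference the authors cite---is precisely devoted to computing these nontrivial Bott--Chern secondary classes on $\P(E)$; their presence is the whole point of that work. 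Consequently your assertion that ``the mixed and base directions assemble exactly into $\pi^* c_j(E,h)$'' is false, and what your Step~2 would actually produce is only the recursion
\[
\sigma_k + \sigma_{k-1}\w c_1(E,h) + \cdots + c_k(E,h) = \text{(an exact form)},
\]
which recovers the cohomological identity $[\sigma_k]=s_k(E)$ but not the pointwise equality of forms asserted in the proposition.

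The proofs in \cite{G} and \cite{Diverio} avoid this trap by computing $\pi_*(\Phi^{k+r-1})$ directly. One writes $\Phi$ explicitly in a local frame for $E$ and affine fiber coordinates on $\P^{r-1}$, decomposing it into its vertical (Fubini--Study), mixed, and horizontal pieces, expands the power, and carries out the fiber integration; the result is then identified with the invariant polynomial defining $s_k$ evaluated on $\tfrac{i}{2\pi}\Theta(E,h)$. Your Step~1 normalization $\pi_*(\Phi^{r-1})=1$ is correct and is indeed the base case of that computation, but the inductive mechanism you propose in Step~2 does not go through.
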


To our knowledge this result first appears in \cite{Mou} (see also \cite{G} and \cite{Diverio}). We will use Proposition~\ref{prop:Segre} to define Segre forms in the singular setting. Before we turn to this we first need to establish a few preliminary results and recall some facts from pluripotential theory.

\section{Preliminaries}
\noindent Throughout the paper, we are interested in singular hermitian metrics that are Griffiths positive as in Definition \ref{df:Gr}. Since this condition is formulated in terms of the dual metric we first of all need to check that this dual metric is also a singular hermitian metric in the sense of Definition \ref{df:sing}. Already in \cite{BP} (section 3) this is claimed in passing, but since no argument is presented, for the sake of completeness, we include a proof here.

\begin{lma}\label{lma:SingularDual}
Any singular hermitian metric $h$ on a vector bundle $E$ induces a canonical dual singular hermitian metric $h^*$ on the dual bundle $E^*$
such that $h^{**} = h$ under the natural isomorphism $E^{**} \cong E$.
\end{lma}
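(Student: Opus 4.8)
The plan is to define $h^*$ pointwise by the dual-norm formula and then verify the three things the statement requires: that $h^*$ is again a singular hermitian metric in the sense of Definition~\ref{df:sing}, that $x\mapsto h^*(x)$ is measurable, and that $h^{**}=h$ under $E^{**}\cong E$. For a fixed $x$, writing $V=E_x$, I set
\[
\|\xi^*\|_{h^*(x)}:=\sup_{\xi\neq 0}\frac{|\xi^*(\xi)|}{\|\xi\|_{h(x)}},
\]
with the conventions $a/\infty=0$ and $a/0=+\infty$ for $a>0$. This expression uses no choice of frame, so $h^*$ is canonical, which is what makes the final biduality statement meaningful under the standard identification $E^{**}\cong E$.

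The algebraic core is straightforward pointwise linear algebra. Let $N:=\{\xi:\|\xi\|_{h(x)}=0\}$ be the null space and $E_0:=\{\xi:\|\xi\|_{h(x)}<\infty\}$ the finite locus, so $N\subseteq E_0\subseteq V$ and $h(x)$ induces a positive definite inner product on $E_0/N$. A short computation shows that $\|\xi^*\|_{h^*(x)}<\infty$ forces $\xi^*\in\ann(N)$ (otherwise some $\xi\in N$ makes the quotient infinite), that vectors outside $E_0$ contribute $0$, and that on $\ann(N)$ the supremum is exactly the induced inner product on $(E_0/N)^*$. Hence the finite locus of $h^*$ is the subspace $\ann(N)$, and there $h^*$ is an honest positive semidefinite hermitian form with null space $\ann(E_0)$; thus $h^*$ is an extended hermitian form of the type allowed by Definition~\ref{df:sing}. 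Here it is essential to read ``ordinary hermitian form'' as permitting a kernel, since the rank-one model $h=e^{-\ph}$, $h^*=e^{\ph}$ already shows that duality trades degeneracy of $h$ for unboundedness of $h^*$: the class of extended positive semidefinite forms is closed under the construction, while the strictly nondegenerate ones are not.

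The identity $h^{**}=h$ then follows by applying the same description to $h^*$ and invoking the finite-dimensional identities $\ann(\ann(W))=W$ together with the biduality of inner products on a finite-dimensional space. Indeed, since the finite locus of $h^*$ is $\ann(N)$ with null space $\ann(E_0)$, the finite locus of $h^{**}$ is $\ann(\ann(E_0))=E_0$ and its null space is $\ann(\ann(N))=N$, while on $E_0/N$ one recovers the original positive definite form. This is entirely routine once the structure of $h^*$ has been pinned down.

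The hard part will be measurability of $x\mapsto h^*(x)$. On the set where $h$ is finite and nondegenerate this is immediate: in a fixed local frame the Gram matrix $G(x)=(\la e_i,e_j\ra_{h(x)})$ has measurable entries (by polarization of the measurable functions $\|\cdot\|_{h(x)}$), and $\|\xi^*\|^2_{h^*(x)}$ is, up to the usual conjugate-transpose conventions, the rational function $\la G(x)^{-1}\xi^*,\xi^*\ra$ of those entries. The genuine difficulty is concentrated where $h=+\infty$ on a proper subspace: there a supremum over a fixed countable dense set of $\xi$ need not compute $\|\xi^*\|_{h^*(x)}$, because such a set generically misses the $x$-dependent (possibly ``irrational'') subspace $E_0(x)$ on which $h$ is finite, so the naive reduction to a countable supremum fails. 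I would resolve this by treating $(x,\xi)\mapsto\|\xi\|^2_{h(x)}$ as jointly measurable --- the natural meaning of ``$h$ is a measurable map into the space of hermitian forms'' --- and rewriting the dual norm via the Legendre-type formula
\[
\|\xi^*\|^2_{h^*(x)}=\sup_{\xi}\big(2\Re\,\xi^*(\xi)-\|\xi\|^2_{h(x)}\big);
\]
the super-level sets in $x$ are then projections of jointly measurable sets, so $\|\xi^*\|^2_{h^*}$ is universally measurable by the measurable projection theorem. Alternatively, one can produce a measurable frame adapted to the flag $N\subseteq E_0\subseteq E_x$ by a measurable selection argument and express $h^*$ through the Moore--Penrose pseudoinverse of the measurable Gram data. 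This measure-theoretic step is the only delicate point; the rest of the lemma is elementary.
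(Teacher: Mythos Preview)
Your pointwise construction is essentially the paper's: both identify the finite locus of $h^*$ with the annihilator $N^o$ of the null space $N\subseteq E_0$, put on it the dual of the nondegenerate form induced on $E_0/N$, and recover $h^{**}=h$ from $\ann(\ann(W))=W$. The only cosmetic difference is that you package this via the dual-norm supremum formula, whereas the paper writes down the quotient $(V_0/N,\tilde h_0)$ and its dual explicitly; these are the same object.

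Where you genuinely diverge is in treating measurability. The paper opens with ``As the statement is pointwise, it is enough to consider a fixed fiber'' and never revisits the issue, so it simply does not verify that $x\mapsto h^*(x)$ is measurable. You correctly observe that this is not automatic---a fixed countable dense set of $\xi$ can miss the $x$-dependent finite subspace $E_0(x)$---and you propose a workable fix via the Legendre identity $\|\xi^*\|_{h^*}^2=\sup_\xi(2\Re\,\xi^*(\xi)-\|\xi\|_h^2)$ together with the measurable projection theorem (or, alternatively, measurable selection of an adapted frame and the Moore--Penrose pseudoinverse). Either route is sound and yields universal measurability, which suffices for the Lebesgue-measurable sense intended in Definition~\ref{df:sing}. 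So your argument is correct and in fact more complete than the paper's on this point; just be aware that invoking projection/selection theorems is heavier machinery than anything the paper uses, and one should state precisely which joint-measurability hypothesis on $(x,\xi)\mapsto\|\xi\|_{h(x)}$ is being assumed.
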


\begin{proof}
As the statement is pointwise, it is enough to consider a fixed fiber $E_x=:V$. Recall that by Definition \ref{df:sing} a singular hermitian form $h$ on $V$ is a map
	\[V\rightarrow [0,\infty],\ \|\xi\|_h:=\begin{cases}\|\xi\|_{h_0}&\hbox{if } \xi\in V_0 \\\  \infty & \hbox{otherwise} \end{cases}\]
where $V_0$ is the subspace of $V$ where $h$ is finite, and $h_0$ is a hermitian form (with values in $[0,\infty)$) on $V_0$.

We want to show that any singular hermitian form $h$ on a vector space $V$ given by the subspace $V_0$ and the hermitian form $h_0$ on $V_0$ induces a singular dual hermitian form on the dual of $V$.

Let $N$ denote the linear subspace of $V_0$ where the hermitian form $h_0$ degenerates, i.\,e., $N$ is the eigenspace of the eigenvalue $0$. Then, $V_0/N$ admits a (canonical) hermitian form $\tilde h_0$ induced by $h_0$ which is non-degenerate (given by $\|\xi+N\|_{ \tilde h_0}:=\|\xi\|_{h_0}$). In particular, there exits a dual hermitian form ${\tilde h_0}{\!\!\big.}^\ast$ on $(V_0/N)^\ast$.
We let $W := V^*$ and $W_0:=N^o=\{\eta\in W:\eta|_N=0\}$ be the annihilator of $N$.
Each $\eta_0:=\eta|_{V_0}$ with $\eta\in W_0$ can be seen as an element $\tilde \eta_0$ of $(V_0/N)^\ast$ defined by $\tilde \eta_0 (\xi+N) := \eta_0(\xi)$,
Hence, we obtain the dual singular hermitian form
	\[\|\eta\|_{h^\ast}:= \begin{cases}\big\|\widetilde{\eta|_{V_0}}\big\|_{{\tilde h_0}{\!\!\!\big.}^\ast} &\hbox{if } \eta\in W_0 \\ \ \infty & \hbox{otherwise}\end{cases} \]
on $W=V^\ast$ associated to $h$.

If we let $M$ denote the linear subspace of $W_0$ where $h^*$ degenerates, then by definition of $h^*$, one obtains that $M = V_0^o=\{ \eta \in W_0 : \eta|_{V_0} = 0 \}$ is the annihilator of $V_0$.
Thus, it follows that $W_0/M \cong (V_0/N)^*$.
If we let $U := W^* \cong V$ and do the above construction for $(W,h^*)$, then one obtains, using the isomorphism $U \cong V^{**}$, that
\begin{equation*}
    U_0 = M^o\cong  \{ v : \eta(v) = 0 \text{ for all $\eta \in W_0$ such that $\eta|_{V_0} = 0$} \} = V_0.
\end{equation*}
Finally if $P$ is the null space of the induced metric $h^{**}$, then
\begin{equation*}
    P=W_0^o \cong \{ v \in V_0 : \eta(v) = 0 \text{ for all $\eta \in W_0$ such that $\eta|_N = 0$} \} = N,
\end{equation*}
and one readily verifies that the metric $h^{**}$ on $U_0 \cong V_0$ equals $h$.
\end{proof}

In the previous section we mentioned the close connection between Segre forms and the Serre line bundle $\Ok_{\P(E)}(1)$ over the projective bundle $\P(E)$ (Proposition \ref{prop:Segre}), and how we will use this connection to define Segre currents in the singular setting. For a smooth metric $h$ it is well-known that if $h$ is Griffiths positive, then the induced metric $e^{-\varphi}$ on $\Ok_{\P(E)}(1)$ will be positive as well (see e.g. \cite{Z}, Example~7.10). Before we can apply techniques from pluripotential theory to define the push-forward in Proposition \ref{prop:Segre} for singular metrics, we need to check that $e^{-\varphi}$ is positive in the singular setting as well.

\begin{lma} \label{lma:inducedPositive}
Let $E$ be a vector bundle, and let $h$ denote a singular, Griffiths positive, hermitian metric on $E$. Let $e^{-\varphi}$ be the induced metric on the line bundle $\Ok_{\P(E)}(1)$. Then $e^{-\varphi}$ is a singular positive metric.
\end{lma}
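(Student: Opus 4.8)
Recall that a singular metric $e^{-\varphi}$ on a line bundle is \emph{positive} precisely when its local weight $\varphi$ is plurisubharmonic, i.e. $i\partial\dbar\varphi\geq 0$ as a current. Since this is a local condition on $X$, the plan is to fix a trivialization $E^*|_U\cong U\times\C^r$ over a coordinate chart $U\subseteq X$, so that $\P(E)|_U\cong U\times\P^{r-1}$ with fiber coordinate $[\xi]$, $\xi=(\xi_1,\dots,\xi_r)$, and then to write down the weight $\varphi$ of the induced metric explicitly in terms of $h^*$ and check that it is plurisubharmonic.

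\textbf{Computing the weight.} Recall that $\Ok_{\P(E)}(-1)\subset\pi^*E^*$ is the tautological line bundle whose fiber over $(x,[\xi])$ is the line $\C\cdot\xi\subset E_x^*$, equipped with the metric inherited from $\pi^*h^*$, and that $e^{-\varphi}$ is by definition the dual metric on $\Ok_{\P(E)}(1)$. First I would work in the affine chart $\{\xi_1\neq 0\}$ with coordinates $w_j=\xi_j/\xi_1$, where $\sigma(x,w):=(1,w_2,\dots,w_r)$ is a local holomorphic frame of $\Ok_{\P(E)}(-1)$, hence a local holomorphic section of $\pi^*E^*$. Dualizing, the corresponding frame of $\Ok_{\P(E)}(1)$ has squared norm $\|\sigma\|^{-2}_{\pi^*h^*}$, so that
\[
\varphi=\log\|\sigma\|^2_{\pi^*h^*}.
\]
(Equivalently, the section of $\Ok_{\P(E)}(1)$ attached to $v\in E_x$ has norm $|\la v,\xi\ra|^2/\|\xi\|^2_{h^*}$ at $[\xi]$, which gives the same formula.) Thus the lemma reduces to showing that $\log\|\sigma\|^2_{\pi^*h^*}$ is plurisubharmonic.

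\textbf{Conclusion.} Since $h$ is Griffiths positive, its dual $h^*$ is Griffiths negative by Definition~\ref{df:Gr}. As $\pi$ is a holomorphic submersion, the pullback property for singular Griffiths positive/negative metrics (the Remark following Corollary~\ref{cor:Properties}, arguing as in \cite{PT}) shows that $\pi^*h^*$ is a singular Griffiths negative metric on $\pi^*E^*$. Invoking the $\log$-formulation of Griffiths negativity — which is equivalent to the defining one by Remark~(i) (see \cite{R}, section 2) — applied to the holomorphic section $\sigma$, we conclude that $\log\|\sigma\|_{\pi^*h^*}$, and hence $\varphi$, is plurisubharmonic. Therefore $e^{-\varphi}$ is a singular positive metric.

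\textbf{Main obstacle.} The substantive point is the identification of the induced weight with the tautological norm $\log\|\sigma\|^2_{\pi^*h^*}$ of a genuinely \emph{holomorphic} section, so that Definition~\ref{df:Gr} can be brought to bear; once this is in place the argument is formal, resting on the pullback property and the log-equivalence. One should also keep track of the singular behaviour: $\varphi$ may equal $-\infty$ exactly where $\sigma$ meets the null space of $h^*$, which is harmless since plurisubharmonic functions are allowed to take the value $-\infty$, while $\|\sigma\|^2_{h^*}$ never attains $+\infty$ because plurisubharmonicity of the nonnegative function $\|\sigma\|^2_{h^*}$ forces it to be finite.
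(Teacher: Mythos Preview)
Your proof is correct and follows essentially the same approach as the paper's: pass to the dual $h^*$ (Griffiths negative), pull back along $\pi$, and then use that a local holomorphic frame of $\Ok_{\P(E)}(-1)$ is a holomorphic section of $\pi^*E^*$ to conclude plurisubharmonicity of the weight. The only cosmetic difference is that the paper phrases the last step abstractly as ``negativity is preserved when restricting to subbundles, hence the dual on $\Ok_{\P(E)}(1)$ is positive,'' whereas you unpack this by writing down the explicit frame $\sigma=(1,w_2,\dots,w_r)$ and invoking the $\log$-formulation of Griffiths negativity directly; both amount to the same observation.
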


\begin{proof}
The Griffiths positivity of $h$ implies (by definition) that the dual metric $h^*$ is Griffiths negative on $E^*$. By arguing as in \cite{PT}, Lemma 2.3.2 (a) the pullback metric $\pi^* h^*$ on $\pi^* E^*$ will also be Griffiths negative,  and since negativity is preserved when restricting to subbundles, this in turn implies that the induced metric on $\Ok_{\P(E)}(-1)$ is negative as well. Thus, the metric $e^{-\varphi}$ on $\Ok_{\P(E)}(1)$ is positive.
\end{proof}

The standard way of defining powers of currents of the form $dd^cu$, where $u$ is a plurisubharmonic function, is through the inductive approach of Bedford-Taylor \cite{BT} (here $d^c=\frac{i}{4\pi}(\dbar-\partial)$). However, in our case there are some extra twists. First off, for the classic theorem of Bedford-Taylor one needs the plurisubharmonic functions to be locally bounded, which is not the case for us. Fortunately, it turns out that this assumption can be replaced by requiring the \textit{unbounded locus} to be of appropriate codimension. Let us quickly recall this important concept.

\begin{df}\label{df:ULocus}
Let $u$ be a plurisubharmonic function on a complex manifold $X$. The \textit{unbounded locus} of $u$, $L(u)$, is defined to be the set of points $x\in X$ such that $u$ is unbounded in every neighborhood of $x$.
\end{df}

Note that if $h = e^{-\varphi}$ is a singular positive hermitian metric on a line bundle, then in a local frame, a local representative of $\varphi$ is plurisubharmonic, so $L(\varphi)$ is defined locally with respect to this frame. However, since two different local representatives of $\varphi$ as defined with respect to two different frames will differ by a locally bounded (harmonic) function, $L(\varphi)$ is well-defined, independent of the local frame.

\begin{remark}\label{remark:UEx}
Although $L(u)$ is always closed and contains the closure of the pole set of $u$, these two set are different in general. In \cite{D2}, Chapter III.4, the function
$$u(z)=\sum\frac{1}{k^2}\log\Big(\big|z-\frac{1}{k}|+e^{-k^3}\Big)$$
is provided as an example of a function which is everywhere finite in $\C$, but with $L(u)=\{0\}$.\qed
\end{remark}

The requirements we impose in the main theorems are inspired by Demailly's variant of the Bedford-Taylor
theorem that makes it possible to define powers of currents $dd^cu$,
where $u$ is a not necessarily locally bounded plurisubharmonic function.

\begin{thm}\label{thm:PPT}
Let $u$ be a plurisubharmonic function, and let $T$ denote a positive $(q,q)$-current. If $L(u) \cap \supp T$ is contained in some
variety $V$ with $\codim(V)\geq k+q$, then there exists a well-defined closed positive $(k+q,k+q)$-current $(dd^cu)^k\wedge T$,
which can be defined locally as the limit of
$$ (dd^c u_\nu)^k \wedge T$$
for any sequence $\{u_\nu\}$ of smooth plurisubharmonic functions decreasing pointwise to $u$.
\end{thm}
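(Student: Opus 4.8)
The plan is to argue by induction on $k$, following the Bedford--Taylor--Demailly method (\cite{D2}, Chapter III). For $k=0$ there is nothing to prove. Assume inductively that $S := (dd^c u)^{k-1}\wedge T$ has been constructed as a closed positive current; it has bidegree $(k+q-1,k+q-1)$, hence bidimension $p := n-k-q+1$, and $\supp S \subseteq \supp T$. The task is then to give meaning to $dd^c u \wedge S$ and to verify that it is a closed positive current arising as the advertised decreasing limit. On the open set $X \minus V$ the function $u$ is locally bounded, since $L(u)\cap \supp S \subseteq L(u)\cap \supp T \subseteq V$; there the classical Bedford--Taylor theory (\cite{BT}) already defines $dd^c u \wedge S$ as a closed positive current and guarantees the stated convergence under decreasing limits. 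All of the content is therefore concentrated in crossing $V$.

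The geometric input that makes this possible is a dimension count. Since $\dim_{\C} V \le n-k-q = p-1 < p$, and a closed positive current of bidimension $(p,p)$ gives no mass to analytic subsets of complex dimension $< p$ (its trace measure does not charge $\mathcal H_{2p}$-negligible sets), the trace measure $\sigma_S$ assigns no mass to $V$. Thus $\sigma_S$ is carried by the locus $X \minus V$ on which $u$ is locally bounded. I would then define $(dd^c u)^k \wedge T := dd^c(u\,S)$: once $u$ is shown to be locally integrable with respect to $\sigma_S$, the product $u\,S$ is a well-defined current of order $0$ and $dd^c(u\,S)$ is automatically closed. Positivity comes for free: writing $u_N := \max(u,-N)$, each $u_N$ is a bounded plurisubharmonic function decreasing to $u$, so $dd^c(u_N S) = dd^c u_N \wedge S \ge 0$ by Bedford--Taylor, and $dd^c(u\,S)$ is the weak limit of these positive currents.

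It remains to establish the local $\sigma_S$-integrability of $u$, and this is the main obstacle. I would derive it from Chern--Levine--Nirenberg type inequalities, incorporated into the same induction: these bound the local mass $\|(dd^c u_N)^k \wedge T\|_K$ on $K \subset\subset X$ in terms of $L^1$-norms of the truncations rather than their sup-norms, so that the codimension hypothesis on $V$ yields a bound uniform in $N$. Monotone convergence then forces $\int_K |u|\,\sigma_S < \infty$, whence $u_N S \to u\,S$ and $dd^c u_N \wedge S \to dd^c(u\,S)$. To identify this limit with that of $(dd^c u_\nu)^k \wedge T$ for an arbitrary smooth sequence $u_\nu \searrow u$, I would enlarge the induction hypothesis to include continuity of the whole construction along decreasing sequences: off $V$ this is Bedford--Taylor, and near $V$ the uniform mass bounds prevent mass from concentrating on $V$ in the limit, forcing the limit to coincide with the trivial extension across $V$ of the product computed on $X \minus V$.

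The delicate points are thus the uniform, $N$-independent mass estimate near $V$ and the sequence-independence of the limit; both are where the codimension restriction $\codim(V)\ge k+q$ is genuinely used. As an alternative to the continuity argument, once the local finiteness of mass near $V$ is in hand one may invoke the Skoda--El Mir extension theorem to obtain directly a closed positive extension of the Bedford--Taylor product across the analytic set $V$, and then match this extension with $dd^c(u\,S)$.
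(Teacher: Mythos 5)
First, a remark on the comparison itself: the paper does not prove Theorem~\ref{thm:PPT} --- it is quoted from Demailly (\cite{D2}, Ch.~III, Theorem 4.5 and Proposition 4.9) --- so the relevant benchmark is Demailly's argument, of which your outline reproduces the standard skeleton: induction on $k$, the observation that $\sigma_S$ does not charge $V$ because $\dim V< n-k-q+1$, the definition $dd^cu\wedge S:=dd^c(uS)$ once $u\in L^1_{loc}(\sigma_S)$, and Chern--Levine--Nirenberg-type mass bounds. You correctly flag the uniform mass estimate near $V$ as the crux; note, though, that "$L^1$-norms instead of sup-norms" is not by itself what makes it work --- one needs the generic-projection/cylinder construction (choose coordinates so that $V$ misses $\overline{B}'\times(\overline{B}''\minus(1-\delta)B'')$, then integrate by parts against bump forms to land in a region where $u$ is bounded), which is precisely the mechanism the paper adapts in the proof of Lemma~\ref{lma:locallyBounded}.

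The genuine error is in your identification of the limit. You assert that "the uniform mass bounds prevent mass from concentrating on $V$ in the limit, forcing the limit to coincide with the trivial extension across $V$ of the product computed on $X\minus V$", and you offer Skoda--El~Mir as an alternative route to that same trivial extension. Both claims fail: uniform mass bounds give only weak-$*$ precompactness, and the limit current can perfectly well charge $V$. Take $u=\log|z_1|$ on $\C^2$, $T=1$, $k=1$, $q=0$, $V=\{z_1=0\}$: any smooth psh $u_\nu\searrow u$ gives $dd^cu_\nu\to dd^cu=[z_1=0]$, a current supported entirely on $V$, whereas the trivial extension of the product computed on $X\minus V$ is $0$. (Consistently, your own primary definition yields $dd^c(u\cdot 1)=[z_1=0]$, so the two objects you propose to "match" do not agree.) The correct route to sequence-independence is the monotone continuity theorem of Bedford--Taylor--Demailly: one proves $u_\nu S_\nu\to uS$ directly, using quasicontinuity and capacity estimates off $V$ together with the integration-by-parts-against-bump-forms device above to control what happens near $V$; any residual discrepancy is then a closed order-zero current supported on $V$, which is handled by the support theorem and testing against bump forms (this is exactly the division of labour between Lemma~\ref{lma:locallyBounded}, Lemma~\ref{lma:UniqueLim} and Lemma~\ref{lma:Unique} in the paper), not by showing the discrepancy cannot exist for mass reasons.
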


We refer to \cite{D2}, Chapter III, Theorem 4.5 and Proposition 4.9, or \cite{D1}, Proposition 2.2.3 for a proof of (more general variants of) this theorem.

\begin{remark}\label{remark:codim}
Looking at Proposition~\ref{prop:Segre} with $\Phi=dd^c\varphi$ for some plurisubharmonic function $\varphi$, at first glance it might seem tempting to use Theorem~\ref{thm:PPT} to define $(dd^c\varphi)^{k+r-1}$. For such a direct application of the theorem, however, we would need to assume that $L(\varphi)$ is contained in some subvariety $W$ with
$$\codim(W)\geq k+r-1,$$
a requirement which is much too strong to be of any practical use, and which in view of Lemma \ref{lma:UnbLoc} does not follow from the
codimension requirements in Theorem~\ref{thm:ContThm} or Theorem~\ref{thm:main}. Instead, we will aim at defining the push-forward directly.
Thus, the main part of the proofs of these theorems will basically be an adjustment of the proof of Theorem~\ref{thm:PPT} for the push-forward.
\end{remark}

\begin{lma}\label{lma:UnbLoc}
    Let $E$ be a holomorphic vector bundle with a singular Griffiths positive metric $h$, and let
    $e^{-\varphi}$ denote the singular positive metric on $\Ok_{\P(E)}(1)$ as in Lemma~\ref{lma:inducedPositive}.
    Then,
    \begin{equation} \label{eq:unbounded}
        L(\varphi) \subseteq \pi^{-1}(L(\log \det h^*))
    \end{equation}
    where $h^*$ denotes the induced dual metric on the dual bundle $E^*$.
\end{lma}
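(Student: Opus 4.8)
The plan is to prove the inclusion in contrapositive form: I will fix a point $p_0\in\P(E)$ whose base point $x_0:=\pi(p_0)$ lies outside $L(\log\det h^*)$, and show that $\varphi$ is bounded in a neighbourhood of $p_0$, so that $p_0\notin L(\varphi)$. Since the statement is local on $X$, I work over a trivializing open set $U\subseteq X$ with a holomorphic frame $e_1^*,\dots,e_r^*$ of $E^*$, and write $\xi=\sum_j w_je_j^*$ for a point of the fibre $E_x^*$ and $H^*(x)$ for the matrix of $h^*$ in this frame, a positive semidefinite hermitian matrix. The tautological description of $\Ok_{\P(E)}(-1)\subseteq\pi^*E^*$, whose fibre over $[\xi]$ is the line $\C\xi$ with the metric induced by $\pi^*h^*$, yields on the affine chart $\{w_1\neq0\}$, with fibre coordinates $\zeta_j=w_j/w_1$, the explicit local weight
\begin{equation*}
\varphi(x,\zeta)=\log\frac{\|\xi\|_{h^*}^2}{|w_1|^2}=\log\langle H^*(x)\hat w,\hat w\rangle,\qquad \hat w=(1,\zeta_2,\dots,\zeta_r).
\end{equation*}
I may choose the frame so that $p_0$ corresponds to $[1:0:\cdots:0]$ and work on a neighbourhood $U\times\{|\zeta|<\delta\}$, on which $\hat w$ is a holomorphic family of vectors with $1\le\|\hat w\|^2\le 1+(r-1)\delta^2$.

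The upper bound on $\varphi$ comes for free: by Lemma~\ref{lma:inducedPositive} the weight $\varphi$ is plurisubharmonic, hence locally bounded above. The entire content is therefore the lower bound, which I obtain by controlling the eigenvalues of $H^*$ through two ingredients. First, Griffiths negativity of $h^*$ (Definition~\ref{df:Gr}) says that $\|a\|_{h^*}^2=\langle H^*a,a\rangle$ is plurisubharmonic for every constant vector $a$; taking $a=e_j^*$ and shrinking $U$ to a relatively compact polydisc, each diagonal entry $H^*_{jj}$ is bounded above, hence so is $\tr H^*$, so the largest eigenvalue satisfies $\lambda_{\max}(H^*)\le\tr H^*\le C$ on $U$. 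Second, $x_0\notin L(\log\det h^*)$ means that $\log\det h^*$—plurisubharmonic by Proposition~\ref{prop:psh} applied to the Griffiths negative metric $h^*$—is bounded near $x_0$, so after shrinking $U$ we have $\det H^*\ge c>0$ there.

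Combining the two ingredients pointwise bounds the smallest eigenvalue from below,
\begin{equation*}
\lambda_{\min}(H^*)\ge\frac{\det H^*}{\lambda_{\max}(H^*)^{\,r-1}}\ge\frac{c}{C^{\,r-1}}>0,
\end{equation*}
whence $\langle H^*\hat w,\hat w\rangle\ge\lambda_{\min}(H^*)\|\hat w\|^2\ge c/C^{\,r-1}$ and so $\varphi\ge\log(c/C^{\,r-1})$ on $U\times\{|\zeta|<\delta\}$. As this is a neighbourhood of $p_0$, we conclude $p_0\notin L(\varphi)$, which is exactly \eqref{eq:unbounded}.

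The main obstacle, and the observation that makes the argument work, is precisely the first ingredient. A priori $h^*$ may take the value $+\infty$, and one might fear that $\lambda_{\max}(H^*)$ is uncontrolled, in which case the determinant bound alone could not bound $\lambda_{\min}$ from below. The point is that Griffiths negativity forces $\langle H^*a,a\rangle$ to be plurisubharmonic and hence locally bounded above, so that the coefficients of $h^*$ are in fact locally bounded near every point; the only possible source of unboundedness of $\varphi$ is then the degeneration of $h^*$, i.e.\ a vanishing smallest eigenvalue, which is exactly what the hypothesis $x_0\notin L(\log\det h^*)$ excludes. A minor technical point to address is that the eigenvalue inequalities hold only where $H^*$ is defined, i.e.\ almost everywhere; but since $\varphi$ is upper semicontinuous, the open set $\{\varphi<\log(c/C^{\,r-1})\}$ has measure zero and is therefore empty, so the lower bound in fact holds at every point.
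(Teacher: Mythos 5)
Your proof is correct and follows essentially the same route as the paper: both arguments reduce to showing that on a neighbourhood of a point outside $L(\log\det h^*)$ the eigenvalues of $h^*$ are bounded above (via plurisubharmonicity of $\|u_j\|^2_{h^*}$ for a holomorphic frame, i.e.\ Griffiths negativity) and the smallest eigenvalue is bounded below by $\det h^*/\lambda_{\max}^{r-1}$, which bounds the local weight $\varphi=\log\|e\|^2_{\pi^*h^*}$. Your added coordinate computation of $\varphi$ on an affine chart and the closing remark on upper semicontinuity are harmless elaborations of what the paper leaves implicit.
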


\begin{proof}
We take $x\in L(\log\det h^*)^c$ so that $\log\det h^*$ is bounded in
a neighborhood $U_x$ of $x$. We are going to show that the eigenvalues $\lambda_1,\ldots,\lambda_r$
of $h^*$ are uniformly bounded from above and below in $U_x$, by strictly positive constants.
This will show that $\pi^{-1}(x)\in L(\varphi)^c$ since if $e$ is a local frame for $\Ok_{\P(E)}(-1)$,
then $\varphi=\log\|e\|^2_{\pi^*h^*}$.

If $h^*$ is expressed as a matrix with respect to a holomorphic frame $u_1,\ldots,u_r$ for $E^*$, then
$$\tr(h^*)=\|u_1\|^2_{h^*}+\cdots+\|u_r\|^2_{h^*}.$$
By the Griffiths negativity of $h^*$, all the norms are plurisubharmonic, and so in particular (after
possibly shrinking $U_x$) bounded from above on $U_x$. Let $M>0$ be such that $\lambda_1,\ldots,\lambda_r\leq M$ on $U_x$.

On the other hand, $x\in L(\log\det h^*)^c$ implies that $\det h^*>C$ on $U_x$, for some $C>0$. Thus,
the smallest eigenvalue $\lambda_k$ of $h^*$ is uniformly bounded from below on $U_x$ by
$$\lambda_k>\frac{C}{\lambda_1\cdots\hat{\lambda}_k\cdots\lambda_r}\geq\frac{C}{M^{r-1}}>0.$$
\end{proof}

As mentioned above, the proofs of our main theorems will be an adaption of the proof of Theorem~\ref{thm:PPT}. The basic idea behind this proof is to reduce to the locally bounded situation, where one can apply the original Bedford-Taylor result. We will use the same basic approach in our proofs, but once the reduction has been carried out we will be in the setting of singular metrics on line bundles, and so we will need the following variant of the Bedford-Taylor theorem.

\begin{thm}\label{thm:PPT2}
Let $e^{-\varphi}$ be a non-degenerate singular positive metric on a line bundle $L$, let
$e^{-\varphi_\nu}$ be a sequence of smooth positive metrics on $L$, and let $e^{-\psi}$ be a fixed smooth metric on $L$. Let furthermore $\{ T_\nu \}$ be a sequence of closed positive currents
converging to a current $T$. Assume that either:\vspace{0.1cm}\\
\noindent(i) $\varphi_\nu$ converges locally uniformly to $\varphi$, or \\
\noindent(ii) $\varphi_\nu$ decreases pointwise to $\varphi$ and $T_\nu = T$.\vspace{0.1cm}

Then, the limits of
$$ (\varphi_\nu-\psi) (dd^c \varphi_\nu)^k \wedge T_\nu \text{ and } (dd^c \varphi_\nu)^k \wedge T_\nu,$$
as $\nu \to \infty$ exist in the sense of currents. These limits are independent of the sequences
$\{ \varphi_\nu \}$ and $\{ T_\nu \}$, and can thus be used as definitions of the $(k+q,k+q)$-currents
$(\varphi-\psi)(dd^c \varphi)^k\wedge T$ and $(dd^c \varphi)^k\wedge T$.
\end{thm}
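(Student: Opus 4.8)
The plan is to prove both convergence statements simultaneously by induction on $k$, building $(dd^c\varphi)^k\wedge T$ and $(\varphi-\psi)(dd^c\varphi)^k\wedge T$ together. I first note that although $\varphi$ is only a local potential, the difference $\varphi-\psi$ is the globally defined function given by the logarithm of the ratio of the two metrics on $L$, while $dd^c\varphi$ is the global curvature; hence every expression below is independent of the local frame. The induction rests on the identity $dd^c(fS)=dd^cf\wedge S$, valid for any closed positive current $S$ and locally bounded function $f$. Applying it with $f=\varphi_\nu-\psi$ and $S_\nu:=(dd^c\varphi_\nu)^{k-1}\wedge T_\nu$, and writing $dd^c\varphi_\nu=dd^c(\varphi_\nu-\psi)+dd^c\psi$, yields
$$(dd^c\varphi_\nu)^k\wedge T_\nu = dd^c\big[(\varphi_\nu-\psi)\,S_\nu\big] + dd^c\psi\wedge S_\nu,$$
and the analogous relation for the limiting objects.

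Granting the inductive hypotheses $S_\nu\to S:=(dd^c\varphi)^{k-1}\wedge T$ and $(\varphi_\nu-\psi)\,S_\nu\to(\varphi-\psi)\,S$ in the sense of currents, the weak continuity of $dd^c$ and of wedging with the fixed smooth form $dd^c\psi$ immediately give $(dd^c\varphi_\nu)^k\wedge T_\nu\to(dd^c\varphi)^k\wedge T$, where I define $(dd^c\varphi)^k\wedge T:=dd^c[(\varphi-\psi)S]+dd^c\psi\wedge S$. Thus the whole induction reduces, for the products $R_\nu:=(dd^c\varphi_\nu)^{j}\wedge T_\nu\to R$ arising at each level $j\le k$, to the single convergence $u_\nu R_\nu\to uR$, where $u_\nu:=\varphi_\nu-\psi$ and $u:=\varphi-\psi$; here $uR$ is a genuine current because $u$ is locally $R$-integrable (automatic when $\varphi$ is locally bounded, which is the situation in which the theorem is applied).

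In case (i) this key convergence is elementary: splitting $u_\nu R_\nu-uR=(u_\nu-u)R_\nu+u(R_\nu-R)$, the first term tends to $0$ since $u_\nu\to u$ locally uniformly while the masses of the positive currents $R_\nu$ stay locally bounded, and the second tends to $0$ since $u$ is continuous and $R_\nu\to R$ weakly. The \emph{hard part} is case (ii), where $\varphi_\nu\downarrow\varphi$ need not be bounded below and the potentials $u_\nu$ must be integrated against the \emph{varying} positive currents $R_\nu$; this is precisely the Bedford--Taylor monotone-convergence phenomenon. Here the weak convergence $R_\nu\to R$ is furnished by Theorem~\ref{thm:PPT} (which is tailored to decreasing sequences and, applied with $V=\emptyset$ in the locally bounded case, carries no codimension cost), while for the potential term one tests against $\chi\ge0$: from $u_\mu\le u_\nu$ for $\mu\ge\nu$ one gets $\la u_\mu R_\mu,\chi\ra\le\la u_\nu R_\mu,\chi\ra$, and letting first $\mu\to\infty$ (using that $u_\nu$ is smooth and $R_\mu\to R$) and then $\nu\to\infty$ (monotone convergence against the positive current $R$) yields $\limsup_\mu\la u_\mu R_\mu,\chi\ra\le\la uR,\chi\ra$. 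The matching lower bound is the genuinely delicate half and constitutes the technical core of the Bedford--Taylor estimate, which I would import rather than reprove.

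Finally, I would establish independence of the limits from the chosen sequences by interlacing: given two admissible sequences, their interlacing is again locally uniformly convergent (case (i)), respectively decreasing (case (ii)), to the same limits, so the convergence already proved forces the two candidate limits to coincide; for the product currents in case (ii) this independence is moreover already contained in Theorem~\ref{thm:PPT}. The potential currents then inherit independence, being the product of the fixed function $u$ with the now unambiguously defined product currents.
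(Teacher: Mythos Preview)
Your approach is essentially the paper's: both observe that $\varphi-\psi$ is a globally well-defined (locally bounded, quasi-plurisubharmonic) function and $dd^c\varphi$ a global form, and then reduce to the classical Bedford--Taylor theory for functions. The paper simply cites Demailly (Chapter~III, Corollary~3.6 for case~(i) and Theorem~3.7 for case~(ii)) and stops, whereas you reproduce the standard inductive scheme behind those results; since you import the hard lower-semicontinuity inequality anyway, the net content is the same.

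One slip: your interlacing argument for independence in case~(ii) does not work as stated, because interlacing two sequences that each decrease to $\varphi$ does \emph{not} in general yield a decreasing sequence. Your fallback is correct, however: independence of $(dd^c\varphi)^k\wedge T$ is already part of Theorem~\ref{thm:PPT} (applied locally with $V=\emptyset$, since $\varphi$ is locally bounded), and once $R$ is unambiguous and you have shown $u_\nu R_\nu\to uR$ for \emph{each} admissible decreasing sequence, the potential current $uR$ is determined by the fixed $u$ and $R$, hence independent of the sequence. So just drop the interlacing claim in case~(ii) and keep the fallback.
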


A proof of this in the case when $L$ is a trivial line bundle and $\psi = 0$ can be found in for example \cite{D2},
Chapter III, in Corollary 3.6 under the assumption (i), and in Theorem 3.7 under the assumption (ii).
The case of singular metrics on line bundles then follows directly from this case since
in a local frame, $\varphi_\nu - \psi$ and $\varphi - \psi$ are the sum of a plurisubharmonic and a smooth function, whose sum is independent of the local trivialization, and in addition $dd^c \varphi_\nu$ is independent of the local trivialization.

\section{Segre Currents}
\label{sec:Segre_currents}
\noindent As mentioned previously, the existence of Chern currents will be achieved by expressing them in terms of Segre currents, as we do for forms in section~\ref{sec:segre-forms}. Hence, we will first need to prove the existence of such Segre currents and furthermore that it is possible to define the wedge products of these. A key result for achieving this is the fact that Definition~\ref{df:Gr} provides us with a notion of positivity that is easy to approximate, see \cite{BP}, Proposition 3.1 and \cite{R}, Proposition 6.2.

\begin{prop}\label{prop:appr}
Let $E$ be a trivial holomorphic vector bundle over a polydisc, and let $h$ be a singular Griffiths positive (negative) hermitian metric on $E$. Then on any smaller polydisc, there exists a sequence of smooth hermitian metrics $\{h_\varepsilon\}$  with positive (negative) Griffiths curvature, increasing (decreasing) to $h$ pointwise.
\end{prop}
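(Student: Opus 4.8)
The plan is to construct $\{h_\varepsilon\}$ by convolving with a smooth radial approximate identity, treating the Griffiths negative case directly and deducing the positive case by duality. Fix the larger polydisc $\Omega$ and the given trivialization $E \cong \Omega \times \C^r$, so that $h$ is a measurable map sending $z\in\Omega$ to a hermitian form $h(z)$ on $\C^r$, and local holomorphic sections are $\C^r$-valued holomorphic functions. Let $\chi_\varepsilon(w)=\varepsilon^{-2n}\chi(w/\varepsilon)$, where $\chi\geq 0$ is smooth, radial, supported in the unit ball, with $\int\chi=1$. In the negative case, define $h_\varepsilon$ entrywise by $h_\varepsilon:=h*\chi_\varepsilon$; this makes sense on any smaller polydisc once $\varepsilon$ is small enough, because the diagonal entries $z\mapsto\la h(z)e_j,e_j\ra$ are plurisubharmonic (hence $L^1_{loc}$) by Griffiths negativity, and the off-diagonal entries are $L^1_{loc}$ by polarization.

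The three required properties are then checked as follows. Smoothness of $h_\varepsilon$ is immediate from the mollification. That $h_\varepsilon$ decreases to $h$ pointwise follows from the classical fact that the mollification of a plurisubharmonic function with a radial approximate identity decreases pointwise to the function: applying this to $z\mapsto\la h(z)\xi,\xi\ra$ for each fixed $\xi\in\C^r$ shows that $\la h_\varepsilon(z)\xi,\xi\ra$ decreases to $\la h(z)\xi,\xi\ra$, which is exactly the assertion that $h_\varepsilon\searrow h$ in the order of hermitian forms, together with pointwise convergence. The heart of the argument is that $h_\varepsilon$ is again Griffiths negative. For a holomorphic section $u$ and a fixed $w$, the function
$$ z \longmapsto \la h(z-w)u(z),u(z)\ra $$
is plurisubharmonic: after the translation $\zeta=z-w$ it becomes $\la h(\zeta)u(\zeta+w),u(\zeta+w)\ra$, and since $\zeta\mapsto u(\zeta+w)$ is again a holomorphic section, this is plurisubharmonic in $\zeta$ by the Griffiths negativity of $h$; translating back gives plurisubharmonicity in $z$. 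Consequently
$$ \la h_\varepsilon(z)u(z),u(z)\ra = \int \la h(z-w)u(z),u(z)\ra\,\chi_\varepsilon(w)\,dw $$
is an average of plurisubharmonic functions with positive weights, hence plurisubharmonic, so $h_\varepsilon$ is Griffiths negative.

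For the Griffiths positive case, apply the above to the dual metric $h^*$, which is Griffiths negative by definition, obtaining smooth Griffiths negative metrics $h^*_\varepsilon:=h^**\chi_\varepsilon$ decreasing to $h^*$. These are in fact non-degenerate: by Proposition~\ref{prop:psh} applied to $h^*$, the function $\log\det h^*$ is plurisubharmonic, hence finite almost everywhere, so $h^*(z)$ is positive definite for a.e.\ $z$, and then $\la h^*_\varepsilon(z)\xi,\xi\ra=\int\la h^*(z-w)\xi,\xi\ra\,\chi_\varepsilon(w)\,dw>0$ for every $\xi\neq 0$. We may therefore set $h_\varepsilon:=(h^*_\varepsilon)^*$, the fiberwise inverse hermitian form, which is smooth and, being the dual of a smooth Griffiths negative metric, is smooth Griffiths positive. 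Since inversion reverses the order of positive definite forms, $h^*_\varepsilon\searrow h^*$ gives $h_\varepsilon\nearrow$ with pointwise limit $(h^*)^*=h$ (interpreting the convergence as $\|u\|^2_{h_\varepsilon}\nearrow\|u\|^2_h$ in $[0,\infty]$ for each fixed vector $u$, so that $h_\varepsilon$ blows up precisely in the directions where $h^*$ degenerates). This yields the desired increasing sequence.

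I expect the main obstacle to be exactly the preservation of Griffiths negativity under convolution. The subtlety is that Griffiths negativity is a condition on $\la h(z)u(z),u(z)\ra$ for \emph{all} holomorphic sections $u$, not merely for constant vectors, so one cannot directly invoke ``the convolution of a plurisubharmonic function is plurisubharmonic''; the translation trick above is precisely what reconciles the variable coefficient $u(z)$ with convolving the $z$-argument of $h$. The only other point requiring care is the non-degeneracy of $h^*_\varepsilon$ in the positive case, which is what allows the fiberwise inversion, and which is secured by Proposition~\ref{prop:psh}.
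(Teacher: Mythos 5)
Your construction is exactly the one the paper relies on: the paper does not prove Proposition~\ref{prop:appr} itself but cites \cite{BP}, Proposition~3.1 and \cite{R}, Proposition~6.2, whose proofs are precisely convolution with a radial approximate identity in the Griffiths negative case (with the same translation trick $z\mapsto\la h(z-w)u(z),u(z)\ra$ to preserve negativity) followed by dualization for the positive case. So in substance you have reproduced the intended argument, and the negative case as you write it is complete.

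There is, however, one concrete gap in your positive case. Your non-degeneracy argument for $h^*_\varepsilon$ invokes Proposition~\ref{prop:psh}, whose hypothesis is $\det h^*\not\equiv 0$; but the definitions in the paper do not exclude a Griffiths positive singular metric whose dual is everywhere degenerate. For instance, on a trivial rank~$2$ bundle take $\|(\xi_1,\xi_2)\|^2_h=|\xi_1|^2$ if $\xi_2=0$ and $+\infty$ otherwise; then $h^*$ is Griffiths negative with $\det h^*\equiv 0$, $h^*_\varepsilon$ is degenerate at every point, and the fiberwise inversion you perform is impossible. The standard repair is cheap: replace $h^*_\varepsilon$ by $h^*_\varepsilon+\delta(\varepsilon)\,\|\cdot\|^2$ with $\delta(\varepsilon)\downarrow 0$, which is smooth, positive definite, still Griffiths negative (since $\|u\|^2$ is plurisubharmonic for holomorphic $u$, and a sum of plurisubharmonic functions is plurisubharmonic), and still decreases pointwise to $h^*$; then invert. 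With that one-line modification your argument covers the full statement. (A symmetric remark applies in the negative case if one insists that the approximants $h_\varepsilon$ be positive definite: convolution only gives semi-definiteness when $h$ itself is degenerate, and the same $+\,\delta(\varepsilon)\|\cdot\|^2$ perturbation fixes it.)
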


The key ingredient in proving the local existence of Segre currents, and their wedge products, is Lemma \ref{lma:locallyBounded} below. Before we get to this result however, we first need to introduce the following concept:

\begin{df}
We say that a smooth $(p,p)$-form $\beta$ is a \emph{bump form} at a point $x$ if it is strongly positive, and such that for some (or equivalently for any) K\"ahler form $\omega$ defined near $x$, there exists a constant $C > 0$ such that $C\omega^p \leq \beta$ as strongly positive forms in a neighborhood of $x$.
\end{df}

We will also use in the proof of the following lemma, as well as later on, the basic formula
that if $\pi : X \to Y$ is a proper submersion, and if $\gamma$ is either a smooth
form or a current on $Y$, and $\eta$ is a smooth form on $X$, then 
\begin{equation} \label{eq:gammaeta}
    \gamma \wedge \pi_* \eta = \pi_*( \pi^* \gamma \wedge \eta).
\end{equation}

\begin{lma}\label{lma:locallyBounded}
Let $E$ be a trivial, rank $r$ vector bundle over a polydisc $P\subset\C^n$, let $h$ denote a singular Griffiths positive hermitian metric on $E$, and let $\{ h_\varepsilon \}$ be a family of smooth hermitian metrics on $E$. Let furthermore $\{T_\varepsilon\}$ be a sequence of closed positive $(q,q)$-currents converging to a current $T$.
Assume that $L(\log\det h^*)$ is contained in some subvariety $V$ of $P$ such that $\codim(V)\geq k+q$. Assume also that either:\vspace{0.1cm}\\
\noindent(i) Outside of $V$, $h$ is non-degenerate and continuous and $h_\varepsilon \to h$ locally uniformly, and on all of $P$, $T_\varepsilon \to T$, or \\
\noindent (ii) On all of $P$, $h_\varepsilon$ increases pointwise to $h$ and $T_\varepsilon = T$.

Then, for any point in $P$, there exists an $(n-k-q,n-k-q)$ bump form $\beta$ with arbitrarily small support
such that the limit of
\begin{equation} \label{eq:locallyBounded}
(-1)^k \int T_\varepsilon \wedge s_k(E,h_\varepsilon)\wedge \beta
\end{equation}
as $\varepsilon$ tends to 0 exists, and is independent of the sequences $\{h_\varepsilon\}$ and $\{T_\varepsilon\}$.
\end{lma}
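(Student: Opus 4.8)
The plan is to realize the Segre form $s_k(E,h_\eps)$ through Proposition~\ref{prop:Segre}, writing $(-1)^k s_k(E,h_\eps) = \pi_*(\Phi_\eps^{k+r-1})$ where $\Phi_\eps = dd^c\varphi_\eps$ is the first Chern form of the induced metric $e^{-\varphi_\eps}$ on $\Ok_{\P(E)}(1)$. Using the projection formula \eqref{eq:gammaeta}, the integral \eqref{eq:locallyBounded} becomes
\begin{equation*}
    \int T_\eps \wedge \pi_*(\Phi_\eps^{k+r-1}) \wedge \beta = \int \pi^*(T_\eps \wedge \beta) \wedge \Phi_\eps^{k+r-1},
\end{equation*}
so that the problem is transported to $\P(E)$, where we must control powers of $dd^c\varphi_\eps$ against the pulled-back current $\pi^*(T_\eps\wedge\beta)$. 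By Lemma~\ref{lma:inducedPositive} the metric $e^{-\varphi}$ is singular positive, and by Lemma~\ref{lma:UnbLoc} its unbounded locus satisfies $L(\varphi)\subseteq \pi^{-1}(L(\log\det h^*)) \subseteq \pi^{-1}(V)$. The essential point is that $\P(E)$ has dimension $n+r-1$, and $\pi^{-1}(V)$ has codimension $\geq k+q$ in $\P(E)$ (codimension is preserved under the submersion $\pi$); meanwhile we are forming a power of order $k+r-1$.

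**Reduction to the bounded case.**
The main work, and the main obstacle, is that $\varphi$ is \emph{not} locally bounded, so one cannot directly invoke the Bedford--Taylor product. The strategy, following the proof of Theorem~\ref{thm:PPT}, is a localization-and-truncation argument: one writes $\varphi$ as a decreasing limit of the truncated weights $\varphi^{(j)} := \max(\varphi, -j)$ (performed on $\Ok_{\P(E)}(1)$ via comparison with a fixed smooth metric $e^{-\psi}$, as in Theorem~\ref{thm:PPT2}), which \emph{are} locally bounded. For the bounded truncations the products $(dd^c\varphi^{(j)})^{k+r-1}\wedge \pi^*(T\wedge\beta)$ are defined by Bedford--Taylor, and one must show convergence as $j\to\infty$. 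The codimension hypothesis enters exactly here: since $L(\varphi)\cap \pi^{-1}(\supp(T\wedge\beta))$ sits inside $\pi^{-1}(V)$, which has codimension $\geq k+q$, and $\pi^*(T\wedge\beta)$ is a positive current whose bidegree leaves room for a $(k+r-1)$-fold product, the support of the unbounded locus is too small to carry mass of the relevant currents. One controls the error near $\pi^{-1}(V)$ by a Chern--Levine--Nirenberg type mass estimate together with the fact that closed positive currents put no mass on sets of the appropriate codimension; this is the pluripotential-theoretic heart of Demailly's argument that we are adapting to the push-forward setting (cf.\ Remark~\ref{remark:codim}).

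**Choosing the bump form and extracting the two cases.**
Because the estimates require $\pi^*(T\wedge\beta)$ to be a bump-type positive current against which the comparison inequalities hold uniformly, one cannot take $\beta$ arbitrary: the role of the bump form $\beta$ (with arbitrarily small support around the chosen point) is precisely to guarantee the strong positivity needed to dominate $\omega$-powers and to make the Chern--Levine--Nirenberg bounds effective. This is why the statement only asserts existence of \emph{some} bump form $\beta$ rather than all test forms. In case (ii), where $T_\eps = T$ and $\varphi_\eps$ decreases to $\varphi$, convergence of \eqref{eq:locallyBounded} follows directly by applying the monotone version (Theorem~\ref{thm:PPT2}(ii)) of the line-bundle Bedford--Taylor theorem on $\P(E)$ to the single current $\pi^*(T\wedge\beta)$, after the truncation argument identifies the limit independently of the approximating sequence. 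In case (i), where $h_\eps\to h$ only locally uniformly off $V$ and $T_\eps\to T$ weakly, one instead invokes the locally-uniform version (Theorem~\ref{thm:PPT2}(i)): away from $\pi^{-1}(V)$ the weights $\varphi_\eps\to\varphi$ uniformly so the products converge there, and the uniform mass estimates near $\pi^{-1}(V)$ show that the contribution from a shrinking neighborhood of $\pi^{-1}(V)$ is uniformly negligible in $\eps$, yielding convergence of the full integral and independence of the sequences $\{h_\eps\}$ and $\{T_\eps\}$. The hard part throughout is making the mass estimates near $\pi^{-1}(V)$ uniform in \emph{both} the regularization parameter $\eps$ and the truncation level $j$ simultaneously, which is what forces the careful choice of $\beta$ and the codimension bookkeeping.
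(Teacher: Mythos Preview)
Your overall setup is correct --- transport to $\P(E)$ via Proposition~\ref{prop:Segre}, use Lemma~\ref{lma:inducedPositive} and Lemma~\ref{lma:UnbLoc} --- but the core of your argument has a genuine gap. You propose to handle the unboundedness of $\varphi$ by truncating to $\varphi^{(j)}=\max(\varphi,-j)$ and controlling the error via Chern--Levine--Nirenberg estimates, arguing that ``closed positive currents put no mass on sets of the appropriate codimension.'' But the codimension is \emph{not} appropriate: this is precisely the content of Remark~\ref{remark:codim}, which you cite without absorbing. The product you need on $\P(E)$ is $(dd^c\varphi)^{k+r-1}\wedge\pi^*(T\wedge\beta)$, where $\pi^*(T\wedge\beta)$ has bidegree $(n-k,n-k)$; for the machinery behind Theorem~\ref{thm:PPT} to apply you would need $L(\varphi)\cap\supp\pi^*(T\wedge\beta)$ contained in a set of codimension at least $(k+r-1)+(n-k)=n+r-1$, i.e.\ a discrete set. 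You only have codimension $\geq k+q$, which for $r>1$ is far too small. The CLN estimates give uniform mass bounds --- enough for weak compactness --- but not the existence or uniqueness of the limit that the lemma claims.

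The paper circumvents this by a completely different mechanism: the specific \emph{structure} of the bump form. After a generic linear change of coordinates so that $V$ projects properly onto each coordinate $(k+q)$-plane, one takes $\beta$ to be a sum of terms $\chi_1(z')\chi_2(z'')\,idz_1\wedge d\bar z_1\wedge\cdots\wedge idz_{n-k-q}\wedge d\bar z_{n-k-q}$, with $\chi_2$ constant on a slightly smaller ball $(1-\delta)B''$. One then subtracts a fixed smooth reference $(dd^c\varphi_1)^{k+r-1}$ and integrates by parts; since $\beta_0$ already has full degree in $z'$, the derivatives of $\chi_1$ drop out and $dd^c$ lands on $\chi_2$. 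The support of $dd^c\chi_2$ lies in the annulus $\overline{B}''\setminus(1-\delta)B''$, which by the choice of coordinates is disjoint from $V$. Hence the entire integral is localized to a region where $\varphi$ is locally bounded, and Theorem~\ref{thm:PPT2} applies directly. The role of $\beta$ is therefore not, as you suggest, to make CLN bounds effective, but to enable this integration-by-parts localization --- and that is exactly the idea missing from your proposal.
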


\begin{proof}
The idea is to express the Segre forms $s_k(E,h_\varepsilon)$ as in Proposition~\ref{prop:Segre}, and then use a similar procedure
as the proof of Theorem~\ref{thm:PPT}.

First of all, by Lemma~\ref{lma:inducedPositive}, the Griffiths positivity of $h$ implies that the metric $e^{-\varphi}$ on $\Ok_{\P(E)}(1)$ is positive.
By \eqref{eq:unbounded}, if $W := \pi^{-1}(V)$, then $L(\varphi) \subseteq W$, and we note that since $\pi$ is a submersion,
$W$ has codimension $\geq k+q$ in $\P(E)$.

For a subset $I \subseteq \{1,\dots,n\}$ consisting of $n-k-q$
distinct elements $I_1 < \dots < I_{n-k-q}$, we define $p_I : \C^n \to \C^{n-k-q}$ to be the projection onto the
coordinates $(z_{I_1},\dots,z_{I_{n-k-q}})$, and similarly, we let $p_{I^c} : \C^n \to \C^{k+q}$ be the projection
onto the remaining coordinates. After any generic linear change of coordinates, for a subvariety $V$ of codimension $k+q$
with $0 \in V$, the projections $p_I$ and $p_{I^c}$ satisfy that the origin is an isolated point of $p_I^{-1}(0)$ in $V$,
and there exist small balls $B_I'\subset\C^{n-k-q}$ and $B_I''\subset\C^{k+q}$ and $0 < \delta_I < 1$  such that
$$V \cap p_I^{-1}(\bar B_I') \cap p_{I^c}^{-1}(\overline{B}_I'' \setminus (1-\delta_I) B_I '') = \emptyset.$$
Since this holds for a generic projection, we can in fact assume that it holds simultaneously for all
the projections $p_I$ and $p_{I^c}$ for all subsets $I \subseteq \{1,\dots,n\}$ consisting of $n-k-q$ elements.

We take $\chi_{I,1}$ and $\chi_{I,2}$ to be positive cut-off functions with compact support on $B_I'$ and $B_I''$
respectively, and which are both strictly positive at $0$, and such that $\chi_{I,2}$ is constant on $(1-\delta_I) B_I''$,
and we let
\begin{equation*}
    \beta := \sum_I (p_I^* \chi_{I,1}) (p_{I^c}^* \chi_{I,2}) idz_{I_1} \wedge d\bar{z}_{I_1} \wedge \dots \wedge i dz_{I_{n-k-q}} \wedge d\bar{z}_{I_{n-k-q}},
\end{equation*}
where the sum is over all ordered subsets $I = \{ I_1,\dots,I_{n-k-q} \mid I_1 < \dots < I_{n-k-q} \}$ of $\{1,\dots,n\}$. Then, $\beta$ is a bump form at $\{ 0 \}$,
and if $B_I'$ and $B_I''$ were chosen small enough above, then $\beta$ can be chosen to have arbitrarily small support.
By linearity, it is enough to prove \eqref{eq:locallyBounded} for each single term in the sum above, and without loss of generality,
we can assume that the term is when $I = \{1,\dots,n-k-q\}$. From now on, for convenience of notation, we write $\chi_1 = \chi_{I,1}$,
$\chi_2 = \chi_{I,2}$, $B' = B_I'$ and $B'' = B_I''$, $\delta = \delta_I$,
and write the coordinates on $\C^n$ as $(z',z'') \in \C^{n-k-q} \times \C^{k+q}$.
We will thus prove \eqref{eq:locallyBounded} for
\begin{equation} \label{eq:simpler_beta}
    \beta = \chi_1(z') \chi_2(z'') \beta_0, \text{ where } \beta_0 := i dz_1  \wedge d\bar{z}_1 \wedge \dots \wedge idz_{n-k-q} \wedge d\bar{z}_{n-k-q}.
\end{equation}

The approximating sequence $\{h_\varepsilon\}$ yields an approximating sequence $\{e^{-\varphi_\varepsilon}\}$ on $\Ok_{\P(E)}(1)$,
such that either $\varphi_\varepsilon$ converges locally uniformly to $\varphi$ outside of $W$ or decreases pointwise to $\varphi$, depending
on whether we are under assumption (i) or (ii). As these metrics are smooth, we can apply Proposition~\ref{prop:Segre}, and combining this with 
\eqref{eq:gammaeta}, we deduce that
$$(-1)^k\int_P T_\varepsilon \wedge s_k(E,h_\varepsilon) \wedge \beta =\int_P T_\varepsilon \wedge\pi_*\big((dd^c\varphi_\varepsilon)^{k+r-1}\big)\wedge  \beta
=\int_{P \times \P^{r-1}}\pi^*T_\varepsilon \wedge (dd^c\varphi_\varepsilon)^{k+r-1}\wedge \pi^* \beta,$$
where $\pi^* T_\varepsilon$ exists as a closed positive $(q,q)$-current since $\pi: P \times\P^{r-1}\to P$ is a proper submersion.

The main point with introducing $B'$ and $B''$ is that $V$ is disjoint from the compact cylinder,
$$K_\delta:=\overline{B}' \times \big(\overline{B}'' \setminus (1-\delta)B''\big)$$
and thus, $\varphi$ is locally bounded on $\pi^{-1}(K_\delta)$.
By performing integration by parts, we want to move to an integral on $\pi^{-1}(K_\delta)$, which then basically
reduces the problem to the well-known locally bounded setting.

Since we are dealing with metrics (not functions), we cannot directly perform integration by parts,
but we first need to add and subtract a smooth reference metric, say $(dd^c\varphi_1)^{k+r-1}$ with $\varepsilon=1$ fixed,
\begin{equation*}
\begin{split}
&\int_{P\times\P^{r-1}} \pi^*T_\varepsilon \wedge(dd^c\varphi_\varepsilon)^{k+r-1} \wedge \pi^* \beta=
\int_{P\times\P^{r-1}} \pi^*T_\varepsilon\wedge (dd^c\varphi_1)^{k+r-1} \wedge \pi^* \beta+\\
&\quad+\int_{P\times\P^{r-1}} \hspace{-1em}\pi^*T_\varepsilon\wedge dd^c(\varphi_\varepsilon-\varphi_1)\wedge \Big(\sum_{j=0}^{k+r-2}(dd^c\varphi_\varepsilon)^{k+r-2-j}\wedge(dd^c\varphi_1)^{j}\Big)\wedge  \pi^* \beta=:I+II.
\end{split}\end{equation*}
The integral $I$ converges as $\varepsilon \to 0$ since $T_\varepsilon \to T$. Since $\chi_1$ only depends on $z'$, while $\pi^*\beta$
is already of full degree in the $z'$-variables, we obtain that $d\chi_1 \wedge \beta_0 = 0$ and $d^c \chi_1 \wedge \beta_0 = 0$.
Using this in combination with that $\pi^* T_\varepsilon$ is $d$ and $d^c$-closed, we can formally
perform integration by parts in $II$, and obtain that it equals
\begin{equation*}
II = \int_{P \times\P^{r-1}}\pi^*T_\varepsilon \wedge(\varphi_\varepsilon-\varphi_1)\Big(\sum_{j=0}^{k+r-2}(dd^c\varphi_\varepsilon)^{k+r-2-j}\wedge(dd^c\varphi_1)^{j}\Big)\wedge\pi^*  \chi_1 \pi^*dd^c \chi_2 \wedge \pi^*\beta_0.
\end{equation*}

Since $\varphi$ is locally bounded on $\pi^{-1}(K_\delta)$, and  $\supp dd^c\chi_2 \subset \overline{B}''\setminus (1-\delta)B''$,
we get by Theorem~\ref{thm:PPT2} that the limit of $II$ when $\varepsilon$ tends to $0$
exists, and is independent of the choice of regularizing sequences $\{\varphi_\varepsilon\}$ and $\{T_\varepsilon\}$.
\end{proof}

From the proof, it follows that where $h$ is non-degenerate, one can form the product of a closed positive current and a Segre current.
More precisely:

\begin{lma}\label{lma:UniqueLim}
Let $E$, $h$, $\{h_\varepsilon\}$, $T$ and $\{T_\varepsilon\}$ be as in Lemma~\ref{lma:locallyBounded}.
If we are under either the assumptions (i) or (ii) of this lemma, then
outside of $L(\log \det h^*)$, the limit of
\begin{equation*}
    T_\varepsilon \wedge s_k(E,h_\varepsilon)
\end{equation*}
as $\varepsilon$ tends to $0$ exists in the sense of currents, and the limit is independent of the sequences $\{h_\varepsilon\}$ and $\{T_\varepsilon\}$.
\end{lma}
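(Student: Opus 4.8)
The plan is to reduce the statement, just as in the proof of Lemma~\ref{lma:locallyBounded}, to a convergence question on the projectivization $\P(E)$, but now to invoke the Bedford--Taylor theory for \emph{bounded} potentials directly, in place of the integration-by-parts trick, since outside $L(\log\det h^*)$ the relevant potential is locally bounded. Write $U := P \setminus L(\log\det h^*)$, let $e^{-\varphi}$ be the induced positive metric on $\Ok_{\P(E)}(1)$ (Lemma~\ref{lma:inducedPositive}), and let $\{e^{-\varphi_\varepsilon}\}$ be the metrics induced by $\{h_\varepsilon\}$. Since each $\varphi_\varepsilon$ is smooth, Proposition~\ref{prop:Segre} together with the projection formula \eqref{eq:gammaeta} gives
\begin{equation*}
    T_\varepsilon \wedge s_k(E,h_\varepsilon) = (-1)^k \pi_*\big( \pi^* T_\varepsilon \wedge (dd^c\varphi_\varepsilon)^{k+r-1} \big),
\end{equation*}
where $\pi^* T_\varepsilon$ is a closed positive $(q,q)$-current because $\pi$ is a proper submersion. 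As the push-forward under the proper submersion $\pi : \pi^{-1}(U) \to U$ is weakly continuous, it suffices to show that the currents $\pi^* T_\varepsilon \wedge (dd^c\varphi_\varepsilon)^{k+r-1}$ converge on $\pi^{-1}(U)$, with a limit independent of the sequences $\{h_\varepsilon\}$ and $\{T_\varepsilon\}$.

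The key point is that by Lemma~\ref{lma:UnbLoc} we have $L(\varphi) \subseteq \pi^{-1}(L(\log\det h^*))$, so $\varphi$ is locally bounded, and hence $e^{-\varphi}$ non-degenerate, on $\pi^{-1}(U)$. We are therefore in the locally bounded setting of Theorem~\ref{thm:PPT2}. Under assumption (ii), $\varphi_\varepsilon$ decreases pointwise to $\varphi$ and $\pi^* T_\varepsilon = \pi^* T$, so Theorem~\ref{thm:PPT2}(ii) applies verbatim on $\pi^{-1}(U)$ and yields both the convergence and its independence of the chosen sequence. Under assumption (i), $\varphi_\varepsilon$ converges locally uniformly to $\varphi$ away from $W := \pi^{-1}(V)$ and $\pi^* T_\varepsilon \to \pi^* T$, so Theorem~\ref{thm:PPT2}(i) gives the desired convergence on $\pi^{-1}(U) \setminus W = \pi^{-1}(P\setminus V)$, which is a dense open subset of $\pi^{-1}(U)$ since $V$ is a subvariety and $L(\log\det h^*)\subseteq V$.

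The main work therefore lies in case (i): upgrading the convergence from $\pi^{-1}(U) \setminus W$ to all of $\pi^{-1}(U)$, i.e.\ across the thin set $W \cap \pi^{-1}(U) = \pi^{-1}(V \cap U)$, on which $\varphi$ stays bounded but the approximation is no longer uniform (note that $h$ itself need not be continuous there, only non-degenerate). My plan here is to use the local finite-mass bounds furnished by the proof of Lemma~\ref{lma:locallyBounded}: these show that the currents $\pi^* T_\varepsilon \wedge (dd^c\varphi_\varepsilon)^{k+r-1}$ have locally uniformly bounded mass near $W$, so that every subsequence has a further weakly convergent subsequence. Any two such subsequential limits agree on the dense open set $\pi^{-1}(U)\setminus W$ and have locally finite mass; since $\pi^{-1}(V\cap U)$ has codimension $\geq k+q$ in $\P(E)$, the codimension hypothesis $\codim V \geq k+q$ is exactly what should prevent spurious mass from accumulating on $W$ in the limit, forcing the two limits to coincide and hence the whole family to converge. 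Pushing forward by $\pi$ then gives convergence of $T_\varepsilon \wedge s_k(E,h_\varepsilon)$ on $U$ with a limit independent of the sequences. The delicate step is precisely this no-mass-on-$W$ assertion in the non-monotone situation of case (i); it is the analogue, for locally uniformly convergent (rather than decreasing) approximations, of the mechanism that makes Demailly's Theorem~\ref{thm:PPT} work, and I expect it to be the only genuinely nontrivial ingredient beyond the bookkeeping above.
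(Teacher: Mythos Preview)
Your reduction to $\P(E)$ via Proposition~\ref{prop:Segre} and \eqref{eq:gammaeta}, followed by an appeal to Theorem~\ref{thm:PPT2}, is exactly the paper's proof; the paper stops there, in two sentences.

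The extra step you propose --- extending convergence in case~(i) from $P\setminus V$ to $P\setminus L(\log\det h^*)$ across $V\setminus L(\log\det h^*)$ --- is not in the paper, and you leave it explicitly incomplete. Your sketch also has genuine problems. In case~(i) of Lemma~\ref{lma:locallyBounded} the metrics $h_\varepsilon$ are arbitrary smooth hermitian metrics with no positivity assumed, so $dd^c\varphi_\varepsilon$ need not be positive and the currents $\pi^*T_\varepsilon\wedge(dd^c\varphi_\varepsilon)^{k+r-1}$ are not positive in general; your Banach--Alaoglu compactness step then fails as stated. Even granting positivity, your ``no-mass-on-$W$'' heuristic faces a dimension obstruction on $\P(E)$: the currents have bidegree $(k+q+r-1,k+q+r-1)$ while $W=\pi^{-1}(V)$ has codimension only $k+q$, so for $r\geq 2$ the support theorem alone cannot force agreement across $W$.

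You are right that in case~(i), Theorem~\ref{thm:PPT2} only directly yields convergence on $P\setminus V$ rather than on all of $P\setminus L(\log\det h^*)$; the paper does not engage with this distinction. But every use of the lemma in the paper (Proposition~\ref{prop:segre_products}, Lemma~\ref{lma:UniqueLimCh}) feeds the conclusion into Lemma~\ref{lma:Unique} with $A=V$, for which convergence outside $V$ is exactly what is needed and exactly what Theorem~\ref{thm:PPT2} delivers. So the paper's two-line proof suffices for its purposes, and your ``delicate step'' should simply be dropped.
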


\begin{proof}
    If we fix $\varepsilon$, then $h_\varepsilon$ is smooth, and we denote by $e^{-\varphi_\varepsilon}$ the
    induced metric on $\Ok_{\P(E)}(1)$. Since $\varphi_\varepsilon$ is smooth, we get by Proposition~\ref{prop:Segre}
    together with \eqref{eq:gammaeta} that
    \begin{equation} \label{eq:UniqueLim}
        T_\varepsilon \wedge s_k(E,h_\varepsilon) = (-1)^k \pi_* ( \pi^* T_\varepsilon \wedge (dd^c \varphi_\varepsilon)^{k+r-1} ).
    \end{equation}
    By Theorem~\ref{thm:PPT2}, \eqref{eq:UniqueLim} has a limit as $\varepsilon$ tends to 0,
    and the limit is independent of the sequences $\{\varphi_\varepsilon\}$ and $\{T_\varepsilon\}$.
\end{proof}

As mentioned above, the local existence of Segre currents will follow from Lemma~\ref{lma:locallyBounded}. For the transition from local to global, we will use the following lemma.

\begin{lma}\label{lma:Unique}
Let $S$ and $T$ be two closed, positive $(k,k)$-currents such that $S = T$ outside a subvariety $A$ with $\codim(A)\geq k$, and assume that for each point of $p \in A$, there exists an $(n-k,n-k)$ bump form $\beta$ at $p$ with arbitrarily small support such that
\begin{equation*}
    \int S \wedge \beta = \int T \wedge \beta.
\end{equation*}
Then $S = T$ everywhere.
\end{lma}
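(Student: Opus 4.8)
The plan is to study the difference current $R := S - T$. Since $S$ and $T$ are both closed and positive, $R$ is a $d$-closed current of order $0$ (being a difference of positive currents, each of which is automatically of order $0$), of bidimension $(n-k,n-k)$, and by hypothesis $\supp R \subseteq A$ with $\dim A \leq n-k$. The strategy is first to invoke a structure theorem to write $R$ as an explicit combination of integration currents over the top-dimensional components of $A$, and then to use the bump-form hypothesis to show that every coefficient in this combination vanishes.

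First I would appeal to the support theorem for closed currents of order $0$ supported on an analytic set of critical dimension (see e.g.\ \cite{D2}): since $R$ is $d$-closed, of order $0$, of bidimension $(n-k,n-k)$, and supported on the analytic set $A$ of dimension $\le n-k$, it must be of the form $R = \sum_j c_j [A_j]$, where the sum runs over the (locally finitely many) irreducible components $A_j$ of $A$ of dimension exactly $n-k$ and $c_j \in \R$. The components of $A$ of dimension strictly less than $n-k$ contribute nothing, since a closed (hence normal, hence flat) current of bidimension $(n-k,n-k)$ cannot be supported on a set of real dimension $< 2(n-k)$. It is here that the closedness of $R$ is essential: it is what forces the coefficients $c_j$ to be constants along each component, rather than arbitrary densities.

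It then remains to show $c_{j_0} = 0$ for each fixed component $A_{j_0}$. I would choose a \emph{generic} point $p \in A_{j_0}$, namely a regular point of $A_{j_0}$ lying on no other component of $A$; such points form a dense subset of $A_{j_0}$, since $\operatorname{Sing}(A_{j_0}) \cup \bigcup_{j \neq j_0} (A_{j_0} \cap A_j)$ is a proper analytic subset of $A_{j_0}$. As $p \in A$, the hypothesis furnishes a bump form $\beta$ at $p$ of arbitrarily small support with $\int S \wedge \beta = \int T \wedge \beta$, that is, $\int R \wedge \beta = 0$. Shrinking $\supp \beta$ so that it meets $A$ only in the regular locus of $A_{j_0}$ and lies in a chart in which $C\omega^{n-k} \le \beta$, every term with $j \neq j_0$ drops out and one is left with $0 = \int R \wedge \beta = c_{j_0} \int_{A_{j_0}} \beta$.

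Finally, $\int_{A_{j_0}} \beta > 0$: restricting to the $(n-k)$-dimensional complex submanifold $A_{j_0} \cap \supp \beta$, the strong positivity of $\beta$ gives $C\,\omega^{n-k}|_{A_{j_0}} \le \beta|_{A_{j_0}}$, and $\omega^{n-k}|_{A_{j_0}}$ is a strictly positive multiple of the induced volume form, so the integral of $\beta$ over this nonempty open piece is strictly positive. Hence $c_{j_0} = 0$, and since $j_0$ was arbitrary, $R = 0$, i.e.\ $S = T$ on all of $X$. I expect the genuine difficulty to be concentrated in the first step: one must make sure that the support theorem applies to $R$, which is only a \emph{difference} of positive currents rather than a positive current itself, so that one needs the version for $d$-closed currents of order $0$ and not merely the classical Siu-type statement for positive currents.
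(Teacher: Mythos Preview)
Your proposal is correct and follows essentially the same approach as the paper: apply the support theorem (the paper cites \cite{D2}, Chapter~III, Corollary~2.14) to write $S-T=\sum_j c_j[A_j]$, then kill each coefficient by testing against a bump form supported near a point of $A_{j_0}$ away from the other components. Your write-up is in fact a bit more careful than the paper's in justifying why the structure theorem applies to a difference of positive currents and why $\int_{A_{j_0}}\beta>0$.
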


\begin{proof}
The difference $U := S-T$ is a closed $(k,k)$-current of order $0$ with support on $A$. By \cite{D2}, Chapter III, Corollary 2.14, $U = \sum \lambda_i [A_i]$, where $A_i$ are the irreducible components of codimension $k$ of $A$, and $\lambda_i \in \mathbb{C}$. We want to show that $\lambda_i = 0$ for all $i$.

Fix some $i$, take a point in $A_i$ which does not belong to any of the other $A_j$'s, and take a $(n-k,n-k)$ bump form $\beta$ whose support still does not intersect any of the other $A_j$'s. Multiplying $S-T$ with $\beta$ and integrating, we get that
\begin{equation*}
    \lambda_i \int_{A_i} \beta = \int (S-T) \wedge \beta = 0.
\end{equation*}
The integral on the left-hand side is non-zero, so $\lambda_i = 0$.
\end{proof}

With these lemmas at our disposal, we can now define Segre currents and their wedge products.
\begin{prop} \label{prop:segre_products}
Let $E\to X$ be a rank $r$ holomorphic vector bundle over an $n$-dimensional complex manifold $X$, and let $h$ denote a singular Griffiths positive hermitian metric on $E$.
Take $k_1,\ldots,k_m\in\mathbb{N}$ and let $k := k_1 + \dots + k_m$. If $L(\log\det h^*)$ is contained in some subvariety $V$ of $X$ with
$\codim(V)\geq k$, then the wedge product
$$(-1)^k s_{k_1}(E,h)\wedge\cdots\wedge s_{k_m}(E,h)$$
can be defined as a closed, positive current on $X$, through regularization by an iterated limit as in (\ref{eq:IteratedLimit}).
\end{prop}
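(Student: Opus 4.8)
The plan is to reduce to a local, trivial situation and then build the product up one factor at a time by an induction on $m$, invoking at each step Lemmas~\ref{lma:UniqueLim}, \ref{lma:locallyBounded} and~\ref{lma:Unique}. Since the statement is local, I may assume that $E$ is trivial over a polydisc $P$ on which, by Proposition~\ref{prop:appr}, $h$ is the increasing pointwise limit of a sequence $\{h_\varepsilon\}$ of smooth Griffiths positive metrics. For each smooth metric, Proposition~\ref{prop:Segre} gives $(-1)^{k_j}s_{k_j}(E,h_\varepsilon) = \pi_*(\Phi_\varepsilon^{k_j+r-1})$, which is a closed \emph{strongly} positive form, being the push-forward under the proper submersion $\pi$ of a power of the positive $(1,1)$-form $\Phi_\varepsilon$. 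Hence each term $s_{k_1}(E,h_{\varepsilon^1})\w\cdots\w s_{k_m}(E,h_{\varepsilon^m})$ in \eqref{eq:IteratedLimit} is, up to the global sign $(-1)^k$, a wedge of strongly positive closed forms and thus itself strongly positive and closed.

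The induction takes the limits $\varepsilon^1\to 0,\dots,\varepsilon^m\to 0$ in this order. Suppose the first $j-1$ limits have been carried out, producing a closed positive $(k,k)$-current
\begin{equation*}
P^{(j-1)}=\Big(\prod_{l<j}(-1)^{k_l}s_{k_l}(E,h)\Big)\w\Big(\prod_{l\ge j}(-1)^{k_l}s_{k_l}(E,h_{\varepsilon^l})\Big),
\end{equation*}
depending on the still-fixed parameters $\varepsilon^j,\dots,\varepsilon^m$. To take the next limit I rewrite this (the factors all have even degree, hence commute) as $(-1)^{k_j}s_{k_j}(E,h_{\varepsilon^j})\w T$, where $T$ collects all remaining factors. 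Here $T$ is a fixed closed positive $(q_j,q_j)$-current with $q_j=k-k_j$: the factors with $l<j$ are the limit currents produced earlier, those with $l>j$ are smooth strongly positive forms, and wedging a positive current with a strongly positive smooth form preserves positivity and closedness. This is exactly assumption (ii) of Lemmas~\ref{lma:locallyBounded} and~\ref{lma:UniqueLim}, with the fixed current $T$ and the increasing metrics $h_{\varepsilon^j}$, and the codimension bookkeeping matches since $k_j+q_j=k\le\codim V$.

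I then assemble the $j$-th limit as a global current. By Lemma~\ref{lma:UniqueLim} the currents $(-1)^{k_j}s_{k_j}(E,h_{\varepsilon^j})\w T$ converge, as $\varepsilon^j\to 0$, on $P\minus L(\log\det h^*)$ to a limit independent of the regularization. By Lemma~\ref{lma:locallyBounded}, at each point of $V$ there is an $(n-k,n-k)$ bump form $\beta$ of arbitrarily small support for which $\int (-1)^{k_j}s_{k_j}(E,h_{\varepsilon^j})\w T\w\beta$ converges to a limit independent of the regularization. Since $\beta\ge C\omega^{n-k}$ near its centre and the currents are positive, covering a compact subset of $P$ by finitely many such bump forms bounds their mass uniformly in $\varepsilon^j$; the family is therefore relatively compact in the weak topology of currents, and every weak limit is a closed positive $(k,k)$-current. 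Any two weak limits agree on $P\minus V$ by the first convergence and have equal pairing with the bump forms at each point of $V$ by the second, so Lemma~\ref{lma:Unique} (applicable since $\codim V\ge k$) forces them to coincide. Hence the whole family converges to a single closed positive current $P^{(j)}$; as the factors with $l>j$ remain smooth and fixed throughout, $P^{(j)}$ again has the product form displayed above with one more factor turned into a limit current, and the induction proceeds. After the $m$-th limit one obtains the closed positive current $(-1)^k s_{k_1}(E,h)\w\cdots\w s_{k_m}(E,h)$.

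Finally comes the passage from local to global: each single limit above is, outside $V$, independent of the regularization (Lemma~\ref{lma:UniqueLim}) and uniquely determined across $V$ (Lemma~\ref{lma:Unique}), so the currents built over different trivializing polydiscs agree on overlaps and patch to a global closed positive $(k,k)$-current on $X$. The main obstacle I expect is precisely the fusion of the two convergence statements into one global current: turning the convergence of a \emph{single} bump-form pairing near $V$ into a genuine uniform mass bound, and hence into weak convergence, and then checking that Lemma~\ref{lma:Unique} pins down the limit across $V$. A secondary point requiring care is that the inductive ``background'' current $T$ really is a fixed closed positive current, so that the non-smooth case $T_\varepsilon=T$ of the lemmas may be invoked, and that strong positivity is propagated through each wedge so that positivity is never lost.
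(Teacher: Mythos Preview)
Your proposal is correct and follows essentially the same route as the paper: reduce to a trivial bundle over a polydisc via Proposition~\ref{prop:appr}, obtain a uniform mass bound from the bump-form integrals of Lemma~\ref{lma:locallyBounded}, extract weak limits, and pin them down uniquely by combining Lemma~\ref{lma:UniqueLim} (convergence off $V$) with Lemma~\ref{lma:Unique}; full-sequence convergence then follows from uniqueness of the cluster point, and independence of the regularization gives the global current.

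The only organizational difference is in how the induction is set up. The paper inducts on the number $m$ of factors: it takes as inductive hypothesis that the $(m{-}1)$-fold product $T=(-1)^{k-k_m}s_{k_1}(E,h)\wedge\cdots\wedge s_{k_{m-1}}(E,h)$ already exists as a closed positive current, and then applies Lemmas~\ref{lma:locallyBounded} and~\ref{lma:UniqueLim} once, with this $T$ and the single extra regularized factor $s_{k_m}(E,h_\varepsilon)$. You instead keep all $m$ factors present from the outset and send $\varepsilon^1,\dots,\varepsilon^m\to 0$ one at a time, so your background current $T$ at step $j$ still carries the smooth factors for $l>j$. This is harmless, but it obliges you to check at every step that $P^{(j)}$ again factors as $Q_j\wedge(\text{smooth tail})$---which is exactly your remark that ``the factors with $l>j$ remain smooth and fixed throughout''; formally this uses that a fixed smooth form can be pulled through a weak limit of currents. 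The paper's arrangement sidesteps that bookkeeping by never introducing the extra smooth factors into $T$ in the first place.
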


\begin{proof}
The proof is by induction over $m$, starting with the trivial case $m = 0$, when the current is simply $1$. We thus assume by induction that $T := (-1)^{k-k_m} s_{k_1}(E,h) \wedge \dots \wedge s_{k_{m-1}}(E,h)$ exists as a well-defined positive
closed current, and we start by proving existence locally, which we will achieve with the help of Lemma~\ref{lma:locallyBounded}. Hence, assume that $E$ is a trivial vector bundle over a polydisc, let $\{h_\varepsilon\}$ denote an approximating sequence to $h$ (as in Proposition~\ref{prop:appr}), and let $\{s_{k_m}^\varepsilon\}$ denote the corresponding sequence of Segre forms.
Since $(-1)^{k_m} s_{k_m}(E,h_\varepsilon)$ is a smooth and strongly positive $(k_m,k_m)$-form, the product $S_\varepsilon := (-1)^{k_m} T \wedge s_{k_m}^\varepsilon$ is a closed positive current, and its mass is dominated by its trace measure,
$$\| S_\varepsilon \| \leq \int S_\varepsilon \wedge (i\partial\dbar|z|^2)^{n-k}.$$
By Lemma~\ref{lma:locallyBounded}, this integral is uniformly bounded in $\varepsilon$ on a small enough neighborhood of $0$
since $(i\partial\dbar|z|^2)^{n-k}$ is dominated by a constant times any bump form at $\{ 0 \}$ in such a neighborhood.
Hence, it follows from the Banach-Alaoglu theorem that we can find a sequence $\varepsilon_j\rightarrow 0$ such that
$S_{\varepsilon_j}$ converges weakly to a closed, positive $(k,k)$-current $S$.
From combining Lemma~\ref{lma:UniqueLim} and Lemma~\ref{lma:locallyBounded} (under the assumption (ii)) with Lemma~\ref{lma:Unique},
we obtain that the limit $S$ is unique and only depends on $T$ and $h$.

We then in fact obtain that the full sequence $\{ S_{\varepsilon} \}$ converges to $S$
as $\varepsilon \to 0$, since otherwise, we could find a sequence $\{ S_{\varepsilon_j}\}$ with $\varepsilon_j \to 0$,
and a test-form $\phi$ such that the limit of $\int S_{\varepsilon_j} \wedge \phi$ exists, but is not equal to $\int S \wedge \phi$.
After passing to a subsequence, we could assume that $\{S_{\varepsilon_j}\}$ is indeed convergent,
and obtain a contradiction to the uniqueness of the limiting current $S$.

We take the limiting current $S$ as the (local) \emph{definition} of $(-1)^k s_{k_1}(E,h) \wedge \dots \wedge s_{k_m}(E,h)$,
and since it is independent of the regularizing sequence, this locally defined product of Segre currents in fact defines a global
current on $X$.
\end{proof}

\begin{remark}\label{remark:wedge_product_of_currents}
    Since we will need a bit more general result in the next section, we note that the proof of Proposition \ref{prop:segre_products}
    actually gives the following statement:

\emph{
Let $T$ be a closed positive $(q,q)$-current, let $\{\theta_\eps\}$ be a sequence of closed smooth $(k,k)$-forms, and let $V$ be a variety of codimension at least $k+q$. Moreover, we assume that the following three conditions are satisfied: \\
(i) $T\w\theta_\eps$ are positive (which is the case for instance when $\theta_\eps$ are strongly positive). \\
(ii) For any point in $X$, there exists a bump form $\beta$ such that $\int\! T\w\theta_\eps\w\beta$ converges. \\
(iii) Outside of $V$, $T\wedge \theta_\eps$ converges to a (closed positive) current $\tilde S$. \\
Then, the limit $S := \lim_{\eps\to 0} T\w\theta_\eps$ exists and is a closed positive current with $S|_{X\setminus V}=\tilde S$. Furthermore, $S$ is uniquely determined by $\tilde S$ in (iii) and the limits of $\int\! T\w\theta_\eps\w\beta$ in (ii).
}
\end{remark}

Theorem~\ref{thm:main} is an immediate consequence of Proposition~\ref{prop:segre_products} as outlined in the introduction. It is however not enough for Theorem~\ref{thm:ContThm}. For this latter theorem we need the following lemma, whose proof
is similar to the proof of Proposition~\ref{prop:segre_products}.

\begin{lma}\label{lma:UniqueLimCh}
Let $E\to X$ be a rank $r$ holomorphic vector bundle over an $n$-dimensional complex manifold $X$,
and let $h$ denote a singular Griffiths positive hermitian metric on $E$. Let also $\{ T_\varepsilon \}$
be a family of closed positive $(q,q)$-currents on $X$ converging to a current $T$.
Assume that $L(\log \det h^*)$ is contained in some subvariety $V$ of $X$ with $\codim(V) \geq k+q$.
If $\{ h_\varepsilon \}$ is a family of smooth hermitian metrics on $E$ converging locally uniformly to $h$ outside of $V$,
then there exist closed positive $(k+q,k+q)$ currents $S_{\varepsilon}^+$ and $S_{\varepsilon}^-$ such that
\begin{equation*}
   T_\varepsilon \wedge c_k(E,h_\varepsilon) = S_{\varepsilon}^+ - S_{\varepsilon}^-,
\end{equation*}
and the limits of both $S_{\varepsilon}^+$ and $S_{\varepsilon}^-$ as $\varepsilon$ tends to $0$ exist in the sense
of currents, and the limits are independent of the sequences $\{ h_\varepsilon \}$ and $\{ T_\varepsilon \}$.
\end{lma}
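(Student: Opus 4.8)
The plan is to express the Chern form of the smooth metric $h_\varepsilon$ through its Segre forms and then, for each resulting product of Segre forms, to repeat the argument of Proposition~\ref{prop:segre_products}, with the single change that here one and the same regularization $h_\varepsilon$ appears in every factor, so that a single limit suffices in place of the iterated limit \eqref{eq:IteratedLimit}. Throughout I use that $h_\varepsilon$ is Griffiths positive, which is the case in the intended application of the lemma (the regularizations furnished by Proposition~\ref{prop:appr}), since this is what gives the Segre factors a definite sign.

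First I would invoke \eqref{eq:ChernSegre} to write
\begin{equation*}
   c_k(E,h_\varepsilon) = \sum_K a_K\, s_{k_1}(E,h_\varepsilon) \wedge \dots \wedge s_{k_m}(E,h_\varepsilon),
\end{equation*}
the sum running over all partitions $K = (k_1,\dots,k_m)$ of $k$ with integer coefficients $a_K$. By Proposition~\ref{prop:Segre}, $(-1)^{k_j} s_{k_j}(E,h_\varepsilon) = \pi_*(\Phi_\varepsilon^{k_j+r-1})$, where $\Phi_\varepsilon$ is the (positive) first Chern form of the induced positive metric on $\Ok_{\P(E)}(1)$; hence each $(-1)^{k_j} s_{k_j}(E,h_\varepsilon)$ is strongly positive, and therefore so is the product $P_K^\varepsilon := (-1)^k s_{k_1}(E,h_\varepsilon)\wedge\dots\wedge s_{k_m}(E,h_\varepsilon)$. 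Consequently $T_\varepsilon \wedge P_K^\varepsilon$ is a closed positive $(k+q,k+q)$-current. Collecting the terms of the sum according to the sign of $(-1)^k a_K$, I set
\begin{equation*}
   S_\varepsilon^+ := \sum_{(-1)^k a_K > 0} |a_K|\, T_\varepsilon \wedge P_K^\varepsilon, \qquad
   S_\varepsilon^- := \sum_{(-1)^k a_K < 0} |a_K|\, T_\varepsilon \wedge P_K^\varepsilon,
\end{equation*}
which are closed and positive and satisfy $T_\varepsilon \wedge c_k(E,h_\varepsilon) = S_\varepsilon^+ - S_\varepsilon^-$.

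It then remains to show that each $T_\varepsilon \wedge P_K^\varepsilon$ converges, which I would prove by induction on the number $m$ of Segre factors. Writing $q_j := q + k_1 + \dots + k_j$ and
\begin{equation*}
   T^{(j)}_\varepsilon := T_\varepsilon \wedge (-1)^{k_1} s_{k_1}(E,h_\varepsilon) \wedge \dots \wedge (-1)^{k_j} s_{k_j}(E,h_\varepsilon),
\end{equation*}
a closed positive $(q_j,q_j)$-current, the case $j = 0$ is the hypothesis $T_\varepsilon \to T$. Assuming that $T^{(j-1)}_\varepsilon$ converges on all of $X$ to a closed positive current, I observe that $\codim(V) \geq q+k \geq q_{j-1}+k_j$, so the hypotheses of Lemma~\ref{lma:locallyBounded} are met with $(k,q)$ replaced by $(k_j,q_{j-1})$ and $T_\varepsilon$ replaced by $T^{(j-1)}_\varepsilon$; since $h_\varepsilon \to h$ locally uniformly outside $V$, we are in case (i) of that lemma. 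Working over a trivializing polydisc, exactly as in Proposition~\ref{prop:segre_products} the mass of $T^{(j)}_\varepsilon$ is dominated by its trace measure, which near any point is locally uniformly bounded in $\varepsilon$ because $(i\partial\dbar|z|^2)^{n-q_j}$ is dominated by a constant times a bump form $\beta$ and $(-1)^{k_j}\int T^{(j-1)}_\varepsilon \wedge s_{k_j}(E,h_\varepsilon)\wedge\beta$ converges by Lemma~\ref{lma:locallyBounded}(i). Banach--Alaoglu then gives a weakly convergent subsequence of $T^{(j)}_\varepsilon$; its limit is uniquely determined outside $V$ by Lemma~\ref{lma:UniqueLim}(i), and across $V$ by the convergent bump-form integrals of Lemma~\ref{lma:locallyBounded}(i) together with Lemma~\ref{lma:Unique}. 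Hence the full sequence $T^{(j)}_\varepsilon$ converges on all of $X$ to a limit independent of $\{h_\varepsilon\}$ and $\{T_\varepsilon\}$, closing the induction. In particular $S_\varepsilon^+$ and $S_\varepsilon^-$ converge, with limits independent of the sequences.

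The step requiring the most care --- and the analogue of what forces the iterated limit in Proposition~\ref{prop:segre_products} --- is the preservation of the inductive hypothesis that the intermediate current $T^{(j-1)}_\varepsilon$ converges on \emph{all} of $X$, across $V$ included. This global convergence is exactly the input ($T_\varepsilon \to T$ on all of $P$) needed to apply Lemma~\ref{lma:locallyBounded}(i) and Lemma~\ref{lma:UniqueLim}(i) at the next step, and it is available here precisely because the locally uniform convergence of case (i) permits the use of a single regularization $h_\varepsilon$ in every factor and a single limit; in the merely pointwise-increasing setting of Proposition~\ref{prop:segre_products} one cannot control the intermediate products across $V$ without first passing to the limit in the innermost factor, which is what necessitates the iterated limit there. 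The concrete point to verify is thus that the restriction to $V$ of the limiting current is pinned down by Lemma~\ref{lma:Unique}, i.e.\ that the convergent bump-form integrals determine the masses carried on the codimension-$q_j$ components of $V$.
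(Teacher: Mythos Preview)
Your proposal is correct and follows essentially the same route as the paper: express $c_k(E,h_\varepsilon)$ via \eqref{eq:ChernSegre}, split the resulting sum of Segre products by sign into $S_\varepsilon^\pm$, and prove convergence of each $T_\varepsilon\wedge P_K^\varepsilon$ by the same induction as in Proposition~\ref{prop:segre_products}, invoking Lemma~\ref{lma:locallyBounded} and Lemma~\ref{lma:UniqueLim} under assumption~(i) rather than~(ii). Your explicit remark that one needs $h_\varepsilon$ Griffiths positive (so that each $(-1)^{k_j}s_{k_j}(E,h_\varepsilon)$ is strongly positive and $S_\varepsilon^\pm$ are genuinely positive currents) is a valid observation that the paper leaves implicit; it is indeed satisfied in the application via Proposition~\ref{prop:appr}.
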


\begin{proof}
    By \eqref{eq:ChernSegre}, we get that $T_\varepsilon \wedge c_k(E,h_\varepsilon)$
    can be written as a finite sum of terms
    \begin{equation} \label{eq:induction_terms}
        (-1)^k a_K T_\varepsilon \wedge s_{k_1}(E,h_\varepsilon) \wedge \dots \wedge s_{k_m}(E,h_\varepsilon),
    \end{equation}
    where $K = (k_1,\dots,k_m) \in \mathbb{N}^m$ and $k_1 + \dots + k_m = k$, and $a_K$ is an integer.
    We let $S_{\varepsilon}^+$ be the sum of all terms \eqref{eq:induction_terms} with $a_K$ positive,
    and $S_{\varepsilon}^-$ the sum of the remaining terms.
    In order to prove that $S_{\varepsilon}^+$ and $S_{\varepsilon}^-$ have limits, which are independent of
    the sequences, it is thus enough to prove this for each term \eqref{eq:induction_terms}. The proof of this
    is by induction over $m$, and is essentially the same as the proof of local existence and uniqueness in
    the proof of Proposition~\ref{prop:segre_products}, only that the basic case $m=0$ here becomes the fact that
    $T_\varepsilon$ tends to $T$, and when applying Lemma~\ref{lma:locallyBounded} and Lemma~\ref{lma:UniqueLim},
    assumption (i) is used instead of assumption (ii).
\end{proof}

\begin{proof}[Proof of Theorem~\ref{thm:ContThm}]
    We want to prove that the limit of
    \begin{equation} \label{eq:chern_regularization}
        c_{k_1}(E_1,h^1_\varepsilon) \wedge \dots \wedge c_{k_m}(E_m,h^m_\varepsilon)
    \end{equation}
    as $\varepsilon$ tends to $0$ exists, and is independent of the regularizations $h^1_\varepsilon,\dots,h^m_\varepsilon$.

    In order to prove this, we will show by induction over $m$ that
    \begin{equation} \label{eq:induction_cont}
        c_{k_1}(E_1,h^1_\varepsilon) \wedge \dots \wedge c_{k_{m}}(E_{m},h^{m}_\varepsilon) = S_{m,\varepsilon}^+ - S_{m,\varepsilon}^-,
    \end{equation}
    where $S_{m,\varepsilon}^+$ and $S_{m,\varepsilon}^-$ are smooth closed positive forms, which have limits as currents as $\varepsilon$ tends to $0$,
    independent of the regularizing sequences of the metrics on $E_1,\dots,E_{m}$. The basic case $m = 0$ is trivial when we let $S_{0,\varepsilon}^+ = 1$
    and $S_{0,\varepsilon}^- = 0$. Thus by induction,  we assume that such $S_{m-1,\varepsilon}^+$ and $S_{m-1,\varepsilon}^-$ exist.
    Applying Lemma~\ref{lma:UniqueLimCh} with $T_\varepsilon$ either $S_{m-1,\varepsilon}^+$ or $S_{m-1,\varepsilon}^-$,
    and the Chern form $c_{k_m}(E_m,h^m_\varepsilon)$, we get in total four terms, two of which together define $S_{m,\varepsilon}^+$
    and two which define $S_{m,\varepsilon}^-$, and by Lemma~\ref{lma:UniqueLimCh}, these terms have limits
    as $\varepsilon$ tends to $0$, and which are independent of the regularizing sequences.
\end{proof}

\section{Cohomology Check}

\noindent
Locally, a positive singular hermitian metric $h$ on a vector bundle can be approximated by positive smooth metrics (see Lemma \ref{prop:appr}). On the projectivization of the vector bundle, this induces an approximation of the positive singular hermitian metric $e^{-\ph}$ induced by $h$
with smooth positive metrics $e^{-\ph_\eps}$ over preimages of small enough open sets. 
However, to prove the cohomology result (Theorem \ref{thm:Cohomology}), we need a global approximation of $e^{-\ph}$. In general, we do not have a global smooth approximation with positive metrics.
Instead, we have the following global regularization with smooth uniformly quasi-positive metrics (i.e.\ bounded from below by something negative but smooth) by Demailly: 

\begin{thm}[Main Theorem 1.1 in \cite{DemaillyReg}]\label{thm:Demailly-regularization}
Let $M$ be a compact hermitian manifold, and
let $e^{-\ph}$ be a singular positive metric on a line bundle $L$ over $M$.
Then, there exist a sequence of smooth metrics $\{e^{-\tpe}\}$ increasing to $e^{-\ph}$ 
and a smooth positive real $(1,1)$-form $u$ such that
	\[dd^c\tpe +u\geq 0.\]
\end{thm}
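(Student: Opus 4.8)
The plan is to adapt Demailly's regularization argument: regularize the local weights of $e^{-\ph}$ by convolution so as to obtain smooth \emph{decreasing} plurisubharmonic approximations, and then patch these local approximations into a single global smooth metric on $L$ by means of the regularized maximum, arranging that all the errors introduced by the patching are dominated by one fixed smooth form $u$.

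First I would fix the local picture. Since $M$ is compact, cover it by finitely many charts $U_1,\dots,U_N$, each biholomorphic to a neighbourhood of a ball in $\C^n$ and over which $L$ is trivialized, and choose a slightly shrunk cover $U_j'\Subset U_j$ that still covers $M$. In the trivialization over $U_j$ the metric reads $e^{-\ph_j}$ with $\ph_j$ plurisubharmonic, this being precisely the positivity of $e^{-\ph}$, and on overlaps the weights differ by $\ph_j-\ph_k=\log|g_{jk}|^2$, a fixed pluriharmonic (hence smooth) function coming from the transition functions of $L$. For small $\eps>0$ I would set $\ph_{j,\eps}:=\ph_j*\rho_\eps$, the convolution with a standard radial mollifier in the chart coordinates. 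These are smooth and plurisubharmonic, so $dd^c\ph_{j,\eps}\ge 0$, and by the sub-mean-value inequality for plurisubharmonic functions they \emph{decrease} to $\ph_j$ as $\eps\downarrow0$. This gives monotone local regularizations with non-negative Hessian; the whole difficulty is to make them fit together into a global metric.

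Next I would glue. Let $M_\eta$ be Demailly's regularized maximum, a smooth convex function that is non-decreasing in each variable, coincides with $\max$ once its arguments are separated by more than $\eta$, and satisfies $M_\eta(t+c\mathbf 1)=M_\eta(t)+c$. Transporting each $\ph_{\ell,\eps}$ into a common trivialization by the pluriharmonic transition weights and shifting by suitable constants $a_\ell$, I would define, in the trivialization over $U_j'$,
\[
\tpe\ :=\ M_\eta\bigl(\ph_{1,\eps}+\log|g_{1j}|^2+a_1,\ \dots,\ \ph_{N,\eps}+\log|g_{Nj}|^2+a_N\bigr),
\]
choosing the $a_\ell$ and the shrinking so that near $\partial U_\ell'$ the $\ell$-th entry is strictly dominated and drops out of $M_\eta$. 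This makes the right-hand side independent of the chart index $j$ on overlaps, hence a genuine global smooth metric $e^{-\tpe}$ on $L$, and moreover arranges that near each point a single entry is active. The monotonicity of $M_\eta$ together with the monotone decrease of each $\ph_{\ell,\eps}$ forces $\tpe$ to decrease as $\eps\downarrow0$, and since the active entry converges to the weight $\ph$ in the ambient trivialization, $\tpe\to\ph$, i.e.\ $e^{-\tpe}$ increases to $e^{-\ph}$.

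The main obstacle is producing the \emph{single fixed} form $u$, i.e.\ a Hessian lower bound uniform in $\eps$. Differentiating through $M_\eta$ gives
\[
dd^c\tpe\ =\ \sum_i (\partial_iM_\eta)\,dd^c f_i^\eps\ +\ \sum_{i,k}(\partial_i\partial_kM_\eta)\,df_i^\eps\wedge d^c f_k^\eps,
\]
where $f_i^\eps$ denotes the $i$-th entry above. The second sum is non-negative because $M_\eta$ is convex, and since $M_\eta(t+c\mathbf 1)=M_\eta(t)+c$ one has $\sum_i\partial_iM_\eta=1$ with each $\partial_iM_\eta\ge0$; hence $dd^c\tpe\ge\sum_i(\partial_iM_\eta)\,dd^c f_i^\eps$. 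Now $dd^c f_i^\eps=dd^c\ph_{i,\eps}+(\text{fixed smooth part from the transition weights and cut-offs})\ge -C\omega$, where the convolution contributes the non-negative piece and the smooth piece is bounded below by $-C\omega$ \emph{independently of} $\eps$ because the cover, the transition functions and the cut-offs are fixed once and for all. Combining, $dd^c\tpe\ge -C\omega$ for every $\eps$, so $u:=C\omega$ does the job. The delicate point, and the reason compactness of $M$ is essential, is exactly that $C$ and $\omega$ can be taken $\eps$-independent: everything reduces to finitely many uniform local estimates, and the non-negativity of $dd^c\ph_{i,\eps}$ prevents any $\eps$-dependence from entering the lower bound.
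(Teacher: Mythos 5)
There is a genuine gap, and it sits exactly where you pass from the local convolutions to the global metric. Your gluing step asks for constants $a_\ell$ such that, in a common trivialization, the $\ell$-th entry $\ph_{\ell,\eps}+\log|g_{\ell j}|^2+a_\ell$ is strictly dominated near $\partial U_\ell'$ and drops out of $M_\eta$. With \emph{constant} shifts this is impossible: on each overlap all entries converge pointwise to $\ph$ plus the fixed pluriharmonic transition weight plus $a_\ell$, so which entry dominates is governed (in the limit) by the constants alone, uniformly on the whole overlap rather than only near $\partial U_\ell'$; moreover the limit of the glued function would then be $\ph+M_\eta(a_1,\dots,a_N)$, not $\ph$. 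To localize the drop-out you are forced to use non-constant auxiliary functions $\theta_\ell$ tending to $-\infty$ near $\partial U_\ell$, and then the real obstruction appears: you need the domination to hold \emph{uniformly in $\eps$}, but convolutions of the same psh weight taken in two different coordinate charts are not comparable up to an additive constant near the polar set of $\ph$. The correct comparison is only multiplicative (a weak-Harnack type estimate $\sup_{B}\ph\leq \frac{1}{C_H}\fint_{2B}\ph$ with $C_H>1$ for $\ph\leq 0$), so the discrepancy $\ph_{\ell,\eps}-\ph_{m,\eps}$ grows like the depth of the pole as $\eps\to 0$ and cannot be absorbed by any fixed finite shift. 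This is precisely why the Richberg gluing technique you are reproducing works for \emph{continuous} quasi-psh weights but fails for singular ones, and why Demailly's regularization theorem is a deep result: its proof proceeds via the exponential map of a Chern connection and jet-theoretic curvature estimates, and the loss of positivity it produces is governed by the Lelong numbers of $dd^c\ph$ (the bound $T_\eps\geq -cu-\omega$ with $c>\max_x\nu(T,x)$). Your argument produces a loss with no Lelong-number dependence at all, which is a symptom that the gluing cannot be made to work as stated.

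You should also be aware that the paper does not reprove Demailly's theorem from scratch. The statement is quoted as Main Theorem 1.1 of Demailly's 1992 paper, and the proof given in the paper is a \emph{reduction}: one writes $T=dd^c\ph=\alpha+dd^c\psi$ with $\alpha=dd^c\ph_0$ for a smooth reference metric and $\psi=\ph-\ph_0$ quasi-psh, and then verifies the hypotheses of Demailly's theorem by constructing the form $u$ explicitly from the curvature of the induced metric on $\Ok_{\P(TM)}(1)$, using a finite Stein cover, cut-off functions $\chi_j$, and strictly plurisubharmonic functions $a_j$ with $\frac{i}{\pi}\langle\Theta v,v\rangle_g+dd^c a_j\geq 0$; Demailly's theorem then supplies the decreasing smooth quasi-psh $\psi_\eps$ with $\alpha+dd^c\psi_\eps\geq -cu-\omega$, and one sets $\tpe:=\ph_0+\psi_\eps$ after rescaling $u$. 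If you want a self-contained proof you would have to engage with Demailly's actual regularization machinery; the convolution-plus-regularized-maximum route cannot close the uniformity gap at the poles of $\ph$.
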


Actually, Demailly proves a much more general result but the version above is sufficient for our purpose. For the reader's convenience, let us show how the statement from above is deduced from Demailly's theorem. 

\begin{proof}
Let us use the same notation as in \cite{DemaillyReg}: We set $T:=dd^c \ph$ which is positive by assumption.
For a smooth metric $e^{-\ph_0}$ on $L$, the smooth real $(1,1)$-form $\alpha:=dd^c\ph_0$ is in the same $dd^c$-class as $T$. Furthermore,  $T=\alpha+dd^c\psi$ for the quasi-plurisubharmonic  function $\psi:=\ph-\ph_0$.
Let $g$ be a smooth hermitian metric on the tangent bundle $TM$, and let $e^{-\sigma}$ denote the associated metric on $\Ok_{\P(TM)}(1)$ over the projectivization of $TM$.
Although $g$ does not need to be positive, we know that $\sigma$ is positive with respect to tangent vectors along fibres of $\pi\colon\P(TM)\rightarrow M$. In fact,
if $\Theta:=\Theta(TM,g)$ denotes the curvature matrix of $g$, then 
\[dd^c \sigma(p,[v])=-\sfrac i{2\pi\cdot \|v\|^2_g} \langle\Theta v,v\rangle_g + \omega_{\textrm{FS},[g(p)]}\]
for $(p,[v])\in\P(TM)$ (see for instance \cite{G}*{Section 2}).
Let $\{U_j\}$ be an open Stein covering of $M$ such that $TM$ is trivial on $U_j$, and let $\chi_j$ be cut-off functions on $X$ with support in $U_j$ such that $\chi_j=1$ on slightly smaller open sets $\tilde U_j\subset U_j$. We may assume that $\{\tilde U_j\}$ still covers $M$.
On $U_j$, there exist strictly plurisubharmonic functions $a_j$ such that $\frac i\pi\langle\Theta v,v\rangle_g +dd^c a_j\geq 0$ for all $v\in TM$ with $\|v\|_g=1$. For $u:=\sum_j \chi_j dd^c a_j$, we get
	\[dd^c\sigma+u \geq -\sfrac i{2\pi\cdot \|v\|^2_g} \langle\Theta v,v\rangle_g +\omega_{\textrm{FS},[g(p)]} + dd^c\pi^\ast a_l\geq 0\]
on $\tilde U_l$ for every $l$.
Furthermore, we may assume that $u\geq\omega$ where $\omega$ denotes the hermitian form associated to $g$.
We conclude that all the assumptions of Main Theorem 1.1 in \cite{DemaillyReg} are satisfied.
That means that for $c>\max_{x\in M} \nu(T,x)$, there exist smooth quasi-plurisubharmonic functions $\psi_\eps:=\psi_{c,\eps}$ decreasing to $\psi$ such that
\[T_\eps:= \alpha+dd^c\psi_\eps \geq -c u- \omega.\]
Replacing $u$ by $(c+1) u$, we obtain that $T_\eps\geq -u$.
In particular, the smooth quasi-positive metrics $\tpe:=\ph_0+\psi_\eps$ on $L$ satisfy the claimed: $dd^c\tpe=\alpha+dd^c\psi_\eps =T_\eps\geq-u$ and $\tpe$ decreases pointwise to $\ph_0+\psi=\ph$.
\end{proof}

We would like to use Demailly's regularization although the approximating sequence $\{\tpe\}$ is only quasi-positive. For this, we need to consider slightly more general results than in section \ref{sec:Segre_currents} which are given by  the following two propositions. First, we consider a variation of Proposition \ref{prop:segre_products}:

\begin{prop} \label{prop:quasi_segre_products_1}
Let $E\to P$ be a trivial holomorphic vector bundle of rank $r$ over a polydisc $P\subset\C^n$, and
let $e^{-\psi}$ denote a singular positive metric on a holomorphic line bundle $L$ over $\P(E)$.
Let $\alpha$ be a closed positive $(1,1)$-form on $\P(E)$, and
let $T$ be a closed positive $(q,q)$-current on $P$.
If $\pi(L(\psi))$ is contained in some subvariety $V\subset P$ with $\codim(V)\geq k+q$, then for every $j=0{,}...,k+r-1$, 
there is a well-defined closed positive current
\begin{equation} \label{eq:alpha_segre_product}
    T\w\pi_\ast (\alpha^j\w(dd^c\psi)^{k+r-1-j}),
\end{equation}
which is defined as the limit of
\begin{equation*}
T\w\pi_\ast (\alpha^j\w(dd^c\psi_\eps)^{k+r-1-j}),
\end{equation*}
as $\eps$ tends to $0$, for any sequence $\{ e^{-\psi_\eps} \}$ of smooth positive metrics on $L$ increasing pointwise to $e^{-\psi}$.
\end{prop}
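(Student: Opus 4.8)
The strategy is to adapt the proof of Proposition~\ref{prop:segre_products}, isolating the essential mechanism in Remark~\ref{remark:wedge_product_of_currents}. Here the sequence of smooth closed $(k+q,k+q)$-forms to which we wish to apply that machinery is
\begin{equation*}
    \theta_\eps := \pi_\ast\big(\alpha^j \w (dd^c\psi_\eps)^{k+r-1-j}\big),
\end{equation*}
wedged against the fixed closed positive current $T$. Since $\alpha$ and each $dd^c\psi_\eps$ are smooth closed positive $(1,1)$-forms on $\P(E)$, their wedge product is a smooth closed strongly positive form, and the push-forward under the proper submersion $\pi$ is again smooth, closed and positive; hence $T \w \theta_\eps$ is a closed positive current, which verifies condition~(i) of the Remark. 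The whole proof then reduces to establishing conditions~(ii) and~(iii) of Remark~\ref{remark:wedge_product_of_currents}, i.e.\ convergence against a suitable bump form for each point, and convergence outside $V$.

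\textbf{Condition (iii), convergence outside $V$.} Away from $V$, we have $\pi(L(\psi)) \cap (X\setminus V) = \emptyset$, so $\psi$ is locally bounded on $\pi^{-1}(X\setminus V)$. There the Bedford-Taylor theory (in the form of Theorem~\ref{thm:PPT2}, case~(ii), applied on the line bundle $L$ with the increasing sequence $\{e^{-\psi_\eps}\}$) gives that $\alpha^j \w (dd^c\psi_\eps)^{k+r-1-j}$ converges to the well-defined Monge-Amp\`ere-type current $\alpha^j \w (dd^c\psi)^{k+r-1-j}$. Pushing forward by the proper submersion $\pi$ and wedging with $T$ then yields a limit current outside $V$, which supplies the limiting current $\tilde S$.

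\textbf{Condition (ii), the bump-form estimate.} This is the main obstacle and is exactly the content of (the argument behind) Lemma~\ref{lma:locallyBounded}, now run with the extra factors of $\alpha$ present. For a point which, after a generic linear change of coordinates, we may take to be the origin with $V$ of codimension $k+q$, we build the bump form $\beta = \chi_1(z')\chi_2(z'')\beta_0$ exactly as in the proof of Lemma~\ref{lma:locallyBounded}, arranging that $V$ is disjoint from the compact cylinder $K_\delta$ on which $\psi$ is therefore locally bounded. Writing $\int T \w \theta_\eps \w \beta = \int \pi^\ast T \w \alpha^j \w (dd^c\psi_\eps)^{k+r-1-j} \w \pi^\ast\beta$ via \eqref{eq:gammaeta}, we add and subtract a smooth reference power $(dd^c\psi_1)^{k+r-1-j}$ and integrate by parts, using $d\chi_1 \w \beta_0 = d^c\chi_1 \w \beta_0 = 0$ together with the closedness of $\pi^\ast T$ and $\alpha$. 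Since $\alpha$ is a \emph{closed} smooth form it passes harmlessly through the integration by parts (it plays the same passive role as $\pi^\ast T$), and the resulting integral is supported on $\pi^{-1}(K_\delta)$ where $\psi$ is bounded; Theorem~\ref{thm:PPT2}, case~(ii), then gives convergence of this boundary integral. This establishes (ii).

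\textbf{Conclusion.} With conditions (i)--(iii) verified, Remark~\ref{remark:wedge_product_of_currents} yields directly that $S := \lim_{\eps\to 0} T \w \theta_\eps$ exists as a closed positive current, restricts to $\tilde S$ off $V$, and is uniquely determined by the data in (ii) and (iii); in particular it is independent of the chosen increasing sequence $\{e^{-\psi_\eps}\}$. This limit is the desired current \eqref{eq:alpha_segre_product}, completing the proof. The only genuinely new point compared with Proposition~\ref{prop:segre_products} is carrying the closed positive factor $\alpha^j$ through the push-forward and the integration by parts, which causes no difficulty precisely because $\alpha$ is smooth and closed.
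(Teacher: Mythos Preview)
Your proposal is correct and follows essentially the same approach as the paper: verify conditions (i)--(iii) of Remark~\ref{remark:wedge_product_of_currents} by noting that the analogs of Lemma~\ref{lma:locallyBounded} and Lemma~\ref{lma:UniqueLim} go through with the extra factor $\alpha^j$ present, since $\alpha$ is smooth, closed, and positive. The only minor imprecision is in your verification of (iii): rather than first taking the limit of $\alpha^j \wedge (dd^c\psi_\eps)^{k+r-1-j}$ and then ``wedging with $T$'' (which is not a priori meaningful for a current), one should write $T \wedge \theta_\eps = \pi_\ast\big(\pi^\ast T \wedge \alpha^j \wedge (dd^c\psi_\eps)^{k+r-1-j}\big)$ and apply Theorem~\ref{thm:PPT2} with the fixed closed positive current $\pi^\ast T \wedge \alpha^j$, exactly as in the proof of Lemma~\ref{lma:UniqueLim}.
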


\begin{proof}
Replacing the form $s_k(E,h_\eps)$ by $\pi_\ast(\alpha^j\w(dd^c \ph_\eps)^{k+r-1-j})$, we see that the analogous statements of Lemma \ref{lma:locallyBounded} and Lemma \ref{lma:UniqueLim} (under the assumptions (ii)) hold true.
Since $\alpha^j\wedge (dd^c\psi_\eps)^{k+r-1-j}$ are strongly positive,
we get that
	\[T\w\pi_\ast (\alpha^j\wedge (dd^c\psi_\eps)^{k+r-1-j})=\pi_\ast\big(\pi^\ast T\w\alpha^j\wedge (dd^c\psi_\eps)^{k+r-1-j}\big)\]
are positive currents for all $\eps>0$.
Hence, the result follows from the statement in Remark~\ref{remark:wedge_product_of_currents} since the conditions (i--iii) in the remark are all satisfied.
\end{proof}

\begin{prop}\label{prop:quasi_segre_products_2}
Let $E\to P$ be a trivial  holomorphic vector bundle of rank $r$ over a polydisc $P\subset\C^n$, and let $e^{-\ph}$ denote a singular positive hermitian metric on $\Ok_{\P(E)}(1)$ induced by a positive hermitian metric on $E$.
Moreover, let $\alpha=dd^c a$, where $e^{-a}$ is a smooth positive metric on a holomorphic line bundle $L$ over $\P(E)$, and
let $T$ be a  closed positive $(q,q)$-current on $P$.
Assume that $\pi(L(\ph))$ is contained in some subvariety $V\subset P$ such that $\codim(V)\geq k+q$.
If $\{e^{-\tilde\ph_\eps}\}$ is a sequence of smooth metrics increasing pointwise to $e^{-\ph}$ and $dd^c\tilde\ph_\eps+\alpha\geq 0$,
then the limit of
	\[T\w\pi_\ast ((dd^c\tilde\ph_\eps)^{k+r-1})\]
    as $\eps$ tends to 0 exists and equals $T\w\pi_\ast ((dd^c\ph)^{k+r-1})$, as defined in \eqref{eq:alpha_segre_product}.
\end{prop}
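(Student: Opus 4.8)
The plan is to absorb the failure of positivity of $dd^c\tpe$ into the smooth form $\alpha$, thereby reducing everything to the genuinely positive situation covered by Proposition~\ref{prop:quasi_segre_products_1}. Since $e^{-a}$ is a smooth positive metric on $L$ and $e^{-\ph}$ is a singular positive metric on $\Ok_{\P(E)}(1)$, I would introduce the weight $\psi:=\ph+a$, which defines a singular metric $e^{-\psi}$ on $\Ok_{\P(E)}(1)\otimes L$ with $dd^c\psi=dd^c\ph+\alpha\geq0$; thus $e^{-\psi}$ is again singular positive. As $a$ is smooth, and hence locally bounded, $L(\psi)=L(\ph)$, so $\pi(L(\psi))\subseteq V$ with $\codim(V)\geq k+q$, and the hypotheses of Proposition~\ref{prop:quasi_segre_products_1} hold for $e^{-\psi}$. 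Moreover $\psi_\eps:=\tpe+a$ are smooth with $dd^c\psi_\eps=dd^c\tpe+\alpha\geq0$, i.e.\ they are smooth \emph{positive} metrics, and $e^{-\psi_\eps}$ increases to $e^{-\psi}$ because $\tpe$ decreases to $\ph$; hence $\{e^{-\psi_\eps}\}$ is an admissible regularizing sequence for $e^{-\psi}$.

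Next I would expand the power. Since $dd^c\tpe=dd^c\psi_\eps-\alpha$ and all factors are $(1,1)$-forms, which commute, the binomial theorem gives
\begin{equation*}
(dd^c\tpe)^{k+r-1}=\sum_{j=0}^{k+r-1}(-1)^j\binom{k+r-1}{j}\,\alpha^j\w(dd^c\psi_\eps)^{k+r-1-j}.
\end{equation*}
Applying $\pi_\ast$ and wedging with $T$ expresses $T\w\pi_\ast((dd^c\tpe)^{k+r-1})$ as a finite linear combination of the currents $T\w\pi_\ast(\alpha^j\w(dd^c\psi_\eps)^{k+r-1-j})$. By Proposition~\ref{prop:quasi_segre_products_1}, applied to the line bundle $\Ok_{\P(E)}(1)\otimes L$ with weight $\psi$, each of these converges as $\eps\to0$ to the well-defined current $T\w\pi_\ast(\alpha^j\w(dd^c\psi)^{k+r-1-j})$. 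Summing the finitely many terms shows that the limit of $T\w\pi_\ast((dd^c\tpe)^{k+r-1})$ exists and equals $\sum_{j=0}^{k+r-1}(-1)^j\binom{k+r-1}{j}\,T\w\pi_\ast(\alpha^j\w(dd^c\psi)^{k+r-1-j})$.

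It then remains to identify this limit with $T\w\pi_\ast((dd^c\ph)^{k+r-1})$, and here I would exploit that the limit in Proposition~\ref{prop:quasi_segre_products_1} is independent of the chosen regularization. By Proposition~\ref{prop:appr} together with Lemma~\ref{lma:inducedPositive}, the weight $\ph$ admits, after possibly shrinking the polydisc (which is harmless since the identity of currents may be tested against compactly supported forms), a smooth \emph{positive} regularization $\{e^{-\ph_\eps}\}$, i.e.\ $\ph_\eps$ decreasing to $\ph$ with $dd^c\ph_\eps\geq0$. Then $\hat\psi_\eps:=\ph_\eps+a$ is a second admissible positive regularization of $\psi$, so each $T\w\pi_\ast(\alpha^j\w(dd^c\psi)^{k+r-1-j})$ is equally the $\eps\to0$ limit of $T\w\pi_\ast(\alpha^j\w(dd^c\hat\psi_\eps)^{k+r-1-j})$. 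Pulling the finite sum back inside the limit and running the binomial identity in reverse, the summand collapses, giving $T\w\pi_\ast((dd^c\hat\psi_\eps-\alpha)^{k+r-1})=T\w\pi_\ast((dd^c\ph_\eps)^{k+r-1})$ since $dd^c\hat\psi_\eps-\alpha=dd^c\ph_\eps$. Thus the limit equals $\lim_{\eps\to0}T\w\pi_\ast((dd^c\ph_\eps)^{k+r-1})$, which by Proposition~\ref{prop:quasi_segre_products_1} (with $j=0$ and weight $\ph$) is exactly $T\w\pi_\ast((dd^c\ph)^{k+r-1})$ as defined in \eqref{eq:alpha_segre_product}.

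The main obstacle is conceptual rather than computational: because $\tpe$ is only quasi-positive, $dd^c\tpe$ need not be a positive form, and the positivity-based machinery of Section~\ref{sec:Segre_currents} does not apply to it directly. The decisive step is the substitution $\psi=\ph+a$, which trades the quasi-positivity of $dd^c\tpe$ for genuine positivity of $dd^c\psi_\eps$. The only real care needed afterwards is to ensure that the final identification does not depend on the \emph{choice} of positive regularization; this is why I would invoke the regularization-independence of Proposition~\ref{prop:quasi_segre_products_1} twice — once for $\{\psi_\eps\}$ and once for $\{\hat\psi_\eps\}$ — rather than attempt to compare $\tpe$ and $\ph_\eps$ term by term.
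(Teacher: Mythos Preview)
Your proof is correct and follows essentially the same approach as the paper's own argument: both twist by $a$ to pass from the quasi-positive weights $\tpe$ to the genuinely positive weights $\psi_\eps=\tpe+a$, expand $(dd^c\psi_\eps-\alpha)^{k+r-1}$ binomially, and then use Proposition~\ref{prop:quasi_segre_products_1} twice (once with $\psi_\eps$ and once with the auxiliary positive regularization $\ph_\eps+a$) to identify the limit with $T\w\pi_\ast((dd^c\ph)^{k+r-1})$. The only differences are cosmetic---you run the chain of equalities in the reverse order from the paper, and you make explicit the observations that $L(\psi)=L(\ph)$ and that the polydisc may need to be shrunk to invoke Proposition~\ref{prop:appr}.
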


\begin{proof} Let $l:=k+r-1$.
Let $\{ e^{-\ph_\eps} \}$ be a sequence of smooth positive metrics on $\Ok_{\P(E)}(1)$ increasing pointwise to $e^{-\ph}$, which exists since $\ph$ was induced by a positive metric on $E$, see Lemma \ref{prop:appr}.
Proposition \ref{prop:quasi_segre_products_1} (for $j=0$) then implies 
	\begin{equation}\label{eq:quasi-stuff_definition}
	T\w\pi_\ast ((dd^c\ph)^{l}) = \lim_{\eps\rightarrow 0} T\w\pi_\ast ((dd^c\ph_\eps)^{l}).
	\end{equation}
The proposition claims that the same holds when $\varphi_\eps$ is replaced by $\tpe$. To prove this, we account for the lack of positivity of
$\tpe$ by twisting:
We define $\psi:=\ph+a$, $\psi_\eps:=\tpe+a$ which give singular respectively smooth positive metrics $e^{-\psi}$ and $e^{-\psi_\eps}$ on $\Ok_{\P(E)}(1)\otimes L$. 
By assumption, $\psi_\eps$ decreases pointwise to $\psi$.
Thus, Proposition \ref{prop:quasi_segre_products_1} gives 
	\begin{equation}\label{eq:quasi-stuff_first_approximation}
	T\w\pi_\ast (\alpha^j\w(dd^c\psi)^{l-j}) = \lim_{\eps\rightarrow 0} T\w\pi_\ast (\alpha^j\w(dd^c\psi_\eps)^{l-j}).
	\end{equation}
On the other hand, $\{\ph_\eps+a\}$ is another sequence of positive smooth metrics on $\Ok_{\P(E)}(1)\otimes L$
which decreases to $\ph+a=\psi$. By Proposition \ref{prop:quasi_segre_products_1}, we get again:
	\begin{equation}\label{eq:quasi-stuff_second_approximation}
	T\w \pi_\ast (\alpha^j\w(dd^c\psi)^{l-j}) = \lim_{\eps\rightarrow 0} T\w\pi_\ast (\alpha^j\w(dd^c(\ph_\eps+a))^{l-j}).
	\end{equation}
Since
	\[(dd^c\ph_\eps)^l=(dd^c(\ph_\eps+a)-\alpha)^l=\sum{\Big.\!}_{j} \smath{\binom{l}{j}} (-\alpha)^j \w (dd^c(\ph_\eps+a))^{l-j},\]
we obtain: \newcommand{\widereq}{\makebox[2.2em]{=}}%
	\begin{equation*}\begin{split}
	T\w\pi_\ast ((dd^c\ph)^{l})
	&\overset{\eqref{eq:quasi-stuff_definition}}{\widereq} \lim_{\eps\rightarrow 0} T\w\pi_\ast ((dd^c\ph_\eps)^{l})\\
	&\widereq \sum{\Big.\!}_{j} \smath{\binom{l}{j}}  \lim_{\eps\rightarrow 0} T\w\pi_\ast \big((-\alpha)^j \w (dd^c(\ph_\eps+a))^{l-j}\big)\\
	&\overset{\eqref{eq:quasi-stuff_second_approximation}}{\widereq} \sum{\Big.\!}_{j} \smath{\binom{l}{j}}\ T\w\pi_\ast \big((-\alpha)^j \w (dd^c\psi)^{l-j}\big)\\
	&\overset{\eqref{eq:quasi-stuff_first_approximation}}{\widereq}	\sum{\Big.\!}_{j} \smath{\binom{l}{j}} \lim_{\eps\rightarrow 0} T\w\pi_\ast \big((-\alpha)^j \w (dd^c\psi_\eps)^{l-j}\big)\\
	&\widereq \lim_{\eps\rightarrow 0} T\w\pi_\ast ((dd^c\psi_\eps-\alpha)^{l}) \widereq \lim_{\eps\rightarrow 0} T\w\pi_\ast ((dd^c\tpe)^{l}).
	\qedhere
	\end{split}\end{equation*}%
\end{proof}
\medskip
\begin{proof}[Proof of Theorem~\ref{thm:Cohomology}]
The metric $e^{-\varphi}$ on $\Ok_{\P(E)}(1)$, induced by the singular metric $h$ on $E$, is positive by Lemma~\ref{lma:inducedPositive}.
Therefore, we can apply Theorem \ref{thm:Demailly-regularization} and obtain a smooth positive $(1,1)$ form $u$ and
a smooth sequence of metrics $\{ e^{-\tpe} \}$ increasing pointwise to $e^{-\ph}$ on $\P(E)$ such that $dd^c\tpe+u\geq 0$ for all $\eps>0$.
We set $T^\varepsilon_k:=(-1)^k\pi_*((dd^c\tilde\varphi_\varepsilon)^{k+r-1})$.
For all small enough open sets $U$ of $X$, 
there is a closed positive $(1,1)$-form $\alpha\geq u$ on $\pi^{-1}(U)$ such that $\alpha=dd^c a$ for a smooth positive metric $a$ on $\Ok_{\P(E|_U)}(\mu)$, $\mu\in\N$.
For instance, let $a$ be a $\mu$-fold tensor power (with $\mu$ big enough) of a (strictly) positive metric 
on $\Ok_{\P(E|_U)}(1)$ which is induced by a smooth positive metric on $E|_U$.
By an iterated application of Proposition \ref{prop:quasi_segre_products_2}, we obtain that
	\begin{equation}\begin{split}\label{eq:weak_converges_of_T_eps}
	\lim_{\varepsilon_m\to0}\cdots\lim_{\varepsilon_1\to0}T^{\varepsilon_1}_{k_1}\wedge\cdots\wedge T^{\varepsilon_m}_{k_m}
	&= (-1)^{k}\pi_\ast (dd^c\ph)^{k_1+r-1}\w\cdots\w\pi_\ast (dd^c\ph)^{k_m+r-1}\\
	&=s_{k_1}(E,h)\wedge\cdots\wedge s_{k_m}(E,h)
	\end{split}\end{equation}
for any positive integers $k_1,\ldots,k_m$ with $k_1+\cdots+k_m=k$.
Note that even though we apply Proposition~\ref{prop:quasi_segre_products_2} locally, \eqref{eq:weak_converges_of_T_eps} holds globally
on $X$ since all the currents are globally defined.

\smallskip

For an arbitrary smooth metric $h_0$ on $E$, let $e^{-\varphi_0}$ denote the induced metric on $\Ok_{\P(E)}(1)$. The Chern classes being invariants of $\Ok_{\P(E)}(1)$ implies that
$$[dd^c\tilde\varphi_\varepsilon]=[dd^c\varphi_0]$$
which yields 
$$[(dd^c\tilde\varphi_\varepsilon)^{j+r-1}]=[(dd^c\varphi_0)^{j+r-1}]$$
for all integers $j$.
Since the push-forward $\pi_*$ and exterior differentiation commute, this combined with Proposition \ref{prop:Segre} gives
$$[T^\varepsilon_j]=[s_j(E,h_0)]=s_j(E).$$
More generally, this exact same argument yields that
\begin{equation}\label{eq:Smooth_Cohomology}
[T^{\varepsilon_1}_{k_1}\wedge\cdots\wedge T^{\varepsilon_m}_{k_m}]=[s_{k_1}(E,h_0)\wedge\cdots\wedge s_{k_m}(E,h_0)].
\end{equation}
It remains to show that this implies that
$$[s_{k_1}(E,h)\wedge\cdots\wedge s_{k_m}(E,h)]=[s_{k_1}(E,h_0)\wedge\cdots\wedge s_{k_m}(E,h_0)].$$ 
This, however, is a straightforward consequence of \eqref{eq:weak_converges_of_T_eps}, \eqref{eq:Smooth_Cohomology} and Poincar\'e duality,
which yields that the exactness of $s_{k_1}(E,h)\wedge\cdots\wedge s_{k_m}(E,h)-s_{k_1}(E,h_0)\wedge\cdots\wedge s_{k_m}(E,h_0)$ is equivalent to showing that
$$\int_X\big(s_{k_1}(E,h)\wedge\cdots\wedge s_{k_m}(E,h)-s_{k_1}(E,h_0)\wedge\cdots\wedge s_{k_m}(E,h_0)\big)\wedge\beta=0$$
for all smooth, closed $2(n-k)$-forms $\beta$.
\end{proof}

\begin{bibdiv}
\begin{biblist}
\bib{BP}{article}{
   author={Berndtsson, Bo},
   author={P{\u{a}}un, Mihai},
   title={Bergman kernels and the pseudoeffectivity of relative canonical
   bundles},
   journal={Duke Math. J.},
   volume={145},
   date={2008},
   number={2},
   pages={341--378},
}

\bib{BT}{article}{
   author={Bedford, Eric},
   author={Taylor, B. A.},
   title={A new capacity for plurisubharmonic functions},
   journal={Acta Math.},
   volume={149},
   date={1982},
   number={1-2},
   pages={1--40},
}

\bib{D1}{book}{
   author={Demailly, Jean-Pierre},
   title={Analytic methods in algebraic geometry},
   series={Surveys of Modern Mathematics},
   volume={1},
   publisher={International Press, Somerville, MA; Higher Education Press,
   Beijing},
   date={2012},
   pages={viii+231},
}

\bib{D2}{article}{
   author={Demailly, Jean-Pierre},
   title={Complex Analytic and Differential Geometry},
   status={Monograph},
   Year={2012},
   eprint={http://www-fourier.ujf-grenoble.fr/~demailly},
}

\bib{DemaillyReg}{article}{
   author={Demailly, Jean-Pierre},
   TITLE = {Regularization of closed positive currents and intersection theory},
   JOURNAL = {J. Algebraic Geom.},
   VOLUME = {1},
   YEAR = {1992},
   NUMBER = {3},
   PAGES = {361--409},
   ISSN = {1056-3911},
}

\bib{Dem4}{article}{
   author={Demailly, Jean-Pierre},
   title={Singular Hermitian metrics on positive line bundles},
   conference={
      title={Complex algebraic varieties},
      address={Bayreuth},
      date={1990},
   },
   book={
      series={Lecture Notes in Math.},
      volume={1507},
      publisher={Springer, Berlin},
   },
   date={1992},
   pages={87--104},
}

\bib{Diverio}{article}{
   author={Diverio, Simone},
   title={Segre forms and Kobayashi--L\"ubke inequality},
   journal={Math. Z.},
   volume={283},
   date={2016},
   number={3-4},
   pages={1033--1047},
}

\bib{GrPos}{article}{
   author={Griffiths, Phillip A.},
   title={Hermitian differential geometry, Chern classes, and positive
   vector bundles},
   conference={
      title={Global Analysis (Papers in Honor of K. Kodaira)},
   },
   book={
      publisher={Univ. Tokyo Press, Tokyo},
   },
   date={1969},
   pages={185--251},
}

\bib{G}{article}{
   author={Guler, Dincer},
   title={On Segre forms of positive vector bundles},
   journal={Canad. Math. Bull.},
   volume={55},
   date={2012},
   number={1},
   pages={108--113},
}

\bib{Hos}{article}{
   author={Hosono, Genki},
   title={Approximations and examples of singular Hermitian metrics on vector bundles},
   status={Preprint},
   eprint={arXiv:1505.02396 [math.CV]},
   url={http://arxiv.org/abs/1505.02396}
}

\bib{Mou}{article}{
   author={Mourougane, Christophe},
   title={Computations of Bott-Chern classes on ${\P}(E)$},
   journal={Duke Math. J.},
   volume={124},
   date={2004},
   number={2},
   pages={389--420},
}

\bib{Mum}{article}{
    AUTHOR = {Mumford, David},
     TITLE = {Hirzebruch's proportionality theorem in the noncompact case},
   JOURNAL = {Invent. Math.},
    VOLUME = {42},
      YEAR = {1977},
     PAGES = {239--272},
      ISSN = {0020-9910},
       DOI = {10.1007/BF01389790},
       URL = {http://dx.doi.org/10.1007/BF01389790},
}

\bib{PT}{article}{
   author={P{\u{a}}un, Mihai},
   author={Takayama, Shigeharu},
   title={Positivity of twisted relative pluricanonical bundles and their direct images},
   date={2014},
   status={Preprint},
   eprint={arXiv:1409.5504 [math.AG]},
   url={http://arxiv.org/abs/1409.5504}
}

\bib{R}{article}{
   author={Raufi, Hossein},
   title={Singular hermitian metrics on holomorphic vector bundles},
   journal={Ark. Mat.},
   volume={53},
   date={2015},
   number={2},
   pages={359--382},
}

\bib{Z}{book}{
   author={Zheng, Fangyang},
   title={Complex differential geometry},
   series={AMS/IP Studies in Advanced Mathematics},
   volume={18},
   publisher={American Mathematical Society, Providence, RI; International
   Press, Boston, MA},
   date={2000},
}

\end{biblist}
\end{bibdiv}

\end{document}